\chardef\bslash=`\\ 
\newtheorem{thm}{Theorem}[section]
\newtheorem{lem}[thm]{Lemma}
\newtheorem*{thm.indivisibility}{Theorem \ref{thm.indivisibility}}
\newtheorem{fact}[thm]{Fact}
\newtheorem*{CharBRDSDAP}{Simple Characterization of  big Ramsey degrees}
\theoremstyle{remark}
\newtheorem{rem}[thm]{Remark}
\theoremstyle{definition}
\newtheorem{defn}[thm]{Definition}
\newtheorem{convention}[thm]{Convention}
\newtheorem{notation}[thm]{Notation}
\newtheorem{example}[thm]{Example}
\theoremstyle{remark}
\newcommand{\al}{\alpha}
\newcommand{\om}{\omega}
\newcommand{\sse}{\subseteq}
\newcommand{\contains}{\supseteq}
\newcommand{\forces}{\Vdash}
\newcommand{\bJ}{\mathbf{J}}
\newcommand{\bK}{\mathbf{K}}
\newcommand{\bM}{\mathbf{M}}
\newcommand{\bP}{\mathbb{P}}
\newcommand{\bQ}{\mathbb{Q}}
\newcommand{\bT}{\mathbb{T}}
\newcommand{\bS}{\mathbb{S}}
\newcommand{\bU}{\mathbb{U}}
\newcommand{\K}{\mathrm{K}}
\newcommand{\A}{\mathrm{A}}
\newcommand{\B}{\mathrm{B}}
\newcommand{\M}{\mathrm{M}}
\newcommand{\N}{\mathrm{N}}
\newcommand{\re}{\!\restriction\!}
\newcommand{\bA}{\mathbf{A}}
\newcommand{\bB}{\mathbf{B}}
\newcommand{\bF}{\mathbf{F}}
\newcommand{\bG}{\mathbf{G}}
\newcommand{\bfA}{\mathbf{A}}
\newcommand{\bfB}{\mathbf{B}}
\newcommand{\bfC}{\mathbf{C}}
\newcommand{\bfD}{\mathbf{D}}
\newcommand{\bfE}{\mathbf{E}}
\newcommand{\bfM}{\mathbf{M}}
\newcommand{\bfN}{\mathbf{N}}
\newcommand{\bfO}{\mathbf{O}}
\newcommand{\wsim}{\stackrel{w}{\sim}}
\newcommand{\plussim}{\stackrel{+}{\sim}}
\newcommand{\Lplussim}{\stackrel{\mathrm{L}+}{\sim}}
\newcommand{\Lsim}{\stackrel{\mathrm{L}}{\sim}}
\newcommand{\Lwsim}{\stackrel{\mathrm{Lw}}{\sim}}
\newcommand{\ra}{\rightarrow}
\newcommand{\lgl}{\langle}
\newcommand{\rgl}{\rangle}
\newcommand{\rl}{\!\downarrow\!}
\newcommand{\Erdos}{Erd{\H{o}}s}
\newcommand{\Fraisse}{Fra{\"{i}}ss{\'{e}}}
\newcommand{\Hubicka}{Hubi{\v{c}}ka}
\newcommand{\EEAP}{SDAP}
\newcommand{\EEAPnonacronym}{Substructure  Disjoint Amalgamation Property}
\newcommand{\SFAP}{SFAP}
\newcommand{\SFAPnonacronym}{Substructure  Free  Amalgamation Property}
\newcommand{\noprint}[1]{\relax}
\DeclareMathOperator{\ran}{ran}
\DeclareMathOperator{\Sim}{Sim}
\DeclareMathOperator{\Ext}{Ext}
\DeclareMathOperator{\type}{tp}
\newcommand{\eval}[2][\right]{\relax%
  \ifx#1\right\relax \left.\fi#2#1\rvert}
\begin{document}
\title[Indivisible Fra{\"{i}}ss{\'{e}} structures]{Fra{\"{i}}ss{\'{e}}  structures with SDAP$^+$, Part I:  Indivisibility}

\author{R. Coulson}
\address{United State Military Academy, West Point\\
Department of Mathematical Sciences, Thayer Hall 233, West Point, USA}
\email{rebecca.coulson@westpoint.edu}
\urladdr{\url{https://sites.google.com/view/rebeccacoulson/home}}

\author{N. Dobrinen}
\address{University of Denver\\%
Department of Mathematics, 2390 S. York St., Denver, CO USA}
\email{natasha.dobrinen@du.edu}
  \urladdr{\url{http://math.du.edu/~ndobrine}}

  \author{R. Patel}
  \address{%
 African Institute for Mathematical Sciences\\%
    M'bour-Thi\`{e}s, Senegal}
\email{rpatel@aims-senegal.org}

\thanks{The second author is grateful for support from   National Science Foundation Grant DMS-1901753, which also supported research visits to the University of Denver by the first and third authors.
She also is grateful for support
from Menachem Magidor for hosting her visit to
 The Hebrew University of Jerusalem  in December 2019, during which some of the ideas in this paper were formed.%
The third author's work on this paper was supported by the National Science Foundation under Grant No. DMS-1928930 while she was in residence at the Mathematical Sciences Research Institute in Berkeley, California, during the Fall 2020 semester.
}




\begin{abstract}
This is Part I of a two-part series regarding Ramsey properties of \Fraisse\ structures satisfying 
a property called SDAP$^+$, which strengthens the Disjoint  Amalgamation Property.
We prove that every \Fraisse\ structure
in a finite relational language with relation symbols of any finite arity satisfying this property is indivisible.
Novelties    include
a new  formulation of  coding trees in terms of 1-types over initial segments of the \Fraisse\ structure, 
and a direct  proof of indivisibility  which uses the method of forcing to conduct unbounded searches for finite sets. 
In Part II, we prove 
 that
 every \Fraisse\  structure
 in a finite relational language with relation symbols of arity at most two having
this  property
has finite big Ramsey degrees which have a  simple characterization.
It follows that
any such \Fraisse\ structure
admits
 a big  Ramsey structure.
 Part II utilizes a theorem from Part I  as a pigeonhole principle for induction arguments. 
 This work offers a
 streamlined  and unifying approach to  Ramsey theory on some seemingly disparate classes of \Fraisse\ structures.
\end{abstract}
\maketitle
\tableofcontents

\section{Introduction}

\label{sec.intro}

In recent years,
the
Ramsey theory
of
infinite structures has seen quite an expansion.
This area seeks to understand which infinite structures satisfy some analogue of the infinite
Ramsey theorem for the natural numbers.

\begin{thm}[Ramsey, \cite{Ramsey30}]\label{thm.RamseyInfinite}
Given integers $k,r\ge 1$ and a coloring
of the $k$-element subsets of the natural numbers into $r$ colors,
there is an infinite set of natural numbers, $N$, such that all $k$-element subsets of $N$ have the same color.
\end{thm}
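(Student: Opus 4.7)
The plan is to prove the statement by induction on the arity $k$, with the base case $k=1$ being the infinite pigeonhole principle: if the natural numbers are partitioned into finitely many classes, at least one class is infinite. This handles the case when we color points.

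For the inductive step, assume the result for $k-1$ and let $c\colon[\bN]^k\to\{1,\ldots,r\}$ be given. I would construct, by recursion on $i\in\bN$, a strictly increasing sequence $a_0<a_1<\cdots$ of natural numbers, a decreasing sequence of infinite sets $M_0\supseteq M_1\supseteq\cdots$, and a sequence of colors $\chi_0,\chi_1,\ldots\in\{1,\ldots,r\}$, as follows. Set $M_0=\bN$ and $a_0=\min M_0$. Having chosen $a_i\in M_i$, define an auxiliary coloring $c_i\colon[M_i\setminus\{a_i\}]^{k-1}\to\{1,\ldots,r\}$ by $c_i(S)=c(\{a_i\}\cup S)$. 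Apply the inductive hypothesis to obtain an infinite $M_{i+1}\subseteq M_i\setminus\{a_i\}$ monochromatic for $c_i$ in some color $\chi_i$, and let $a_{i+1}=\min M_{i+1}$. The construction preserves the key invariant: whenever $a_i<b_1<\cdots<b_{k-1}$ are all in $M_{i+1}$, then $c(\{a_i,b_1,\ldots,b_{k-1}\})=\chi_i$.

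Next, by the $k=1$ case applied to the sequence $(\chi_i)_{i\in\bN}$, some color $\chi$ is attained for infinitely many indices $i_0<i_1<\cdots$. Let $N=\{a_{i_0},a_{i_1},\ldots\}$. Given any $k$-element subset $\{a_{i_{j_1}},\ldots,a_{i_{j_k}}\}$ of $N$ with $j_1<\cdots<j_k$, note that $a_{i_{j_2}},\ldots,a_{i_{j_k}}$ all lie in $M_{i_{j_1}+1}$ since the $M_\ell$ are nested and each $a_{i_{j_s}}$ for $s\ge 2$ was chosen from $M_{i_{j_s}}\subseteq M_{i_{j_1}+1}$. By the invariant, $c(\{a_{i_{j_1}},\ldots,a_{i_{j_k}}\})=\chi_{i_{j_1}}=\chi$, so $N$ is monochromatic.

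The main obstacle is really a bookkeeping one: verifying the nesting invariant that allows the final pigeonhole step to work for arbitrary $k$-subsets of $N$, not merely consecutive ones. Once this invariant is clearly stated and carried through the recursion, the proof reduces to two applications of pigeonhole-type reasoning (once inside the induction hypothesis, once on the sequence of chosen colors). I would take care to phrase the construction so that the choice of $M_{i+1}$ genuinely depends only on $M_i$ and $a_i$, making the recursion unambiguous.
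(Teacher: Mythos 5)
Your proposal is a correct and complete proof of the infinite Ramsey theorem, following the standard argument by induction on the arity $k$: construct a pre-homogeneous sequence $a_0 < a_1 < \cdots$ via the inductive hypothesis applied to the link colorings $c_i$, then apply the pigeonhole principle to the resulting colour sequence $(\chi_i)$. Note, however, that the paper does not prove this statement at all — it is cited verbatim as Ramsey's classical theorem from \cite{Ramsey30} and used as background motivation — so there is no in-paper argument against which to compare; your proof is the textbook one and is sound.
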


For infinite structures, 
exact analogues of  Ramsey's theorem usually fail, even when the class of finite substructures has the Ramsey property.
This is due to some unseen structure  which persists in  every
infinite substructure isomorphic to the original, but  which dissolves
when considering Ramsey properties of
classes of finite substructures.
This was first seen in
Sierpi\'{n}ski's use of a well-ordering on the rationals to construct a coloring of  unordered pairs of rationals
with two colors such that both colors persist in any subcopy of the rationals (see \cite{Sierpinski}).
The interplay between the well-ordering and the rational order forms
 additional structure   which is in some sense essential, as it  persists upon taking any subset forming another dense linear order
 without endpoints.
The quest to characterize and quantify  the  often hidden but essential  structure for infinite structures, more generally, is the area of
 {\em big Ramsey degrees}.

Given an infinite structure
$\bfM$, we say that
 $\bfM$ has {\em finite big Ramsey degrees} if for each finite substructure
$\bfA$ of $\bfM$, there is an integer $T$
such that the following  holds:
For any coloring of
the copies of $\bfA$ in $\bfM$
into finitely many colors, there is a
 substructure $\bfM'$ of $\bfM$
such that $\bfM'$ is isomorphic to $\bfM$, and
 the copies of $\bfA$ in $\bfM'$ take no more than $T$ colors.
When a $T$ having this property exists, the least  such value
is called the {\em big Ramsey degree} of $\bfA$ in
$\bfM$,
denoted
$T(\bfA, \bM)$.
In particular, if
the big Ramsey degree of $\bfA$ in $\bfM$ is one,
then any finite coloring of the copies of
$\bfA$ in $\bfM$ is
constant
on some subcopy of $\bfM$.

While the area of big Ramsey degrees on infinite structures traces back to Sierpi\'{n}ski's result  that
the big Ramsey degree for unordered pairs of rationals is at least two, and progress on the rationals and other binary relational structures was made in  the decades since,
the question of which infinite structures have finite big Ramsey degrees
attracted
extended interest due to
the flurry of
results in
 \cite{Laflamme/NVT/Sauer10},
\cite{Laflamme/Sauer/Vuksanovic06}, \cite{NVT08},
and \cite{Sauer06}
in tandem with
the publication
of
 \cite{Kechris/Pestov/Todorcevic05}, in which
 Kechris, Pestov, and Todorcevic
asked for an analogue of their
correspondence between
 the Ramsey property  of \Fraisse\ classes and  extreme amenability to the setting of
 big Ramsey degrees for \Fraisse\ limits.
 This was
 addressed
 by Zucker in \cite{Zucker19},
 where he proved a
 connection
between
 \Fraisse\
 limits
 with finite big Ramsey degrees and completion flows in topological dynamics.
Zucker's results apply to {\em big Ramsey structures},
expansions of  \Fraisse\
limits
in which the big Ramsey degrees of the \Fraisse\
limits
can be exactly characterized using the additional structure induced by the expanded language.
This additional structure
involves a well-ordering, and
characterizes
the  essential structure which persists  in every infinite subcopy of the \Fraisse\
limit.
It is this essential structure  we seek to understand
 in the study of big Ramsey degrees.

We  describe
an amalgamation property,
called the \EEAPnonacronym\ (\EEAP),
forming a strengthened version of  disjoint  amalgamation.
The \Fraisse\ limit of a \Fraisse\ class satisfying \EEAP\ is said to satisfy \EEAP$^+$ if it satisfies two additional properties,
 which  we call
the Diagonal Coding Tree Property and the Extension Property.
The motivation behind \EEAP$^+$ was to distill properties  inherent in proofs of big Ramsey degrees which have a simple characterization, and it  has led to 
Theorems \ref{thm.indivisibility} and \ref{thm.main} below.

A particular case of Ramsey theory on infinite structures is when one colors copies of a given substructure  with universe of size one. 
A \Fraisse\ limit  $\bK$ is  called
{\em indivisible} if  every one-element substructure of $\bK$ has big Ramsey degree equal to one.
In Part I, we prove indivisibility for \Fraisse\ limits in finite relational languages with relation symbols of any finite arity satisfying \EEAP$^+$.
In the case when $\bK$ has exactly one substructure of size one
(up to isomorphism),
as happens for instance when the language of $\bK$ has no unary relation symbols and there are no ``loops''
in $\bK$, this definition reduces to the usual one for indivisibility of structures like the Rado graph and the Henson graphs
(see \cite{Sauer06}, \cite{Komjath/Rodl86}, and  \cite{El-Zahar/Sauer89}).

\begin{thm}\label{thm.indivisibility}
Suppose  $\mathcal{K}$ is a  \Fraisse\ class
in a finite relational language
with relation symbols  in any arity such that
its  \Fraisse\ limit  $\bK$ satisfies
\EEAP$^+$.
Then $\bK$ is indivisible.
\end{thm}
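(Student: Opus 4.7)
The plan is to fix a quantifier-free $1$-type $\sigma$ (representing a one-element substructure of $\bK$ up to isomorphism) and a finite coloring $c \colon X_\sigma \to r$ of the set $X_\sigma \sse \bK$ of vertices realizing $\sigma$, and to produce a substructure $\bK' \sse \bK$ with $\bK' \cong \bK$ such that $c$ is constant on $X_\sigma \cap \bK'$. The natural framework is the coding tree of $1$-types advertised in the introduction: fix an enumeration $\{v_n : n < \om\}$ of $\bK$ and set $\bK \rst n = \{v_i : i < n\}$; level $n$ of the tree consists of quantifier-free $1$-types over $\bK \rst n$ consistent with $\bK$, with the distinguished coding node at level $n$ being $\type(v_n / \bK \rst n)$. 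Every element of $\bK$ is a coding node exactly once, and the splitting is governed by the amalgamation structure of the class.

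Next, I would use \EEAP$^+$ to obtain a well-behaved skeleton inside this tree. The Diagonal Coding Tree Property should produce a diagonal subtree in which the coding nodes realize, in a controlled cofinal fashion, all $1$-types that amalgamation permits, and the Extension Property then guarantees that any finite diagonal configuration thus obtained can be completed into a finite substructure of $\bK$. Consequently, suitable subtrees of this skeleton code subcopies of $\bK$, reducing the task to building, inside the skeleton, an appropriately homogeneous subtree whose coding nodes of type $\sigma$ all share a single color $\varepsilon < r$.

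To build this subtree, I would set up a forcing $\bP$ whose conditions are finite approximations: a finite piece of a would-be subtree together with a commitment to a color $\varepsilon$ for coding nodes of type $\sigma$. Density lemmas would assert that any condition can be extended by a further coding node of any required $1$-type over the nodes already chosen, and moreover, when that required $1$-type is $\sigma$, by a coding node of color $\varepsilon$. A generic filter then yields a subtree coding the desired monochromatic copy $\bK' \cong \bK$. The role of forcing here is precisely to perform the unbounded search for finite witnesses promised in the introduction: at each step the witness sought is a finite configuration inside $\bK$ (a single new coding node together with its amalgamation data), but one has no a priori bound on how far into the enumeration one must search, and the forcing formalism lets us argue about the existence of such witnesses abstractly.

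The main obstacle is the density lemma that selects the homogeneous color $\varepsilon$ and proves it survives all finite extensions. Concretely, one must show that after passing to the diagonal subtree, there is a color $\varepsilon < r$ such that for every finite condition $p \in \bP$ and every prescribed $1$-type extension, cofinally many coding nodes of type $\sigma$ in color $\varepsilon$ realize compatible $1$-types over the chosen predecessors. I would prove this by contradiction: if no single color works, then for each $\varepsilon$ there is a finite condition $p_\varepsilon$ blocking it; the method of forcing then lets us piece together these finitely many failures into a single finite substructure of $\bK$ that has no valid one-point extension of type $\sigma$ in any color, contradicting the Extension Property together with the fact that every color class exhausts all possibilities. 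Once this density lemma is in hand, genericity delivers the homogeneous copy and proves the theorem.
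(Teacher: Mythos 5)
Your high-level framework (coding trees of $1$-types, a diagonal skeleton via SDAP$^+$, and a forcing argument to extract a monochromatic copy) matches the paper's architecture, but the forcing you actually propose is structurally wrong and the key density lemma you rely on has a genuine gap.

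The paper deduces indivisibility immediately from the Level Set Ramsey Theorem (Theorem~\ref{thm.matrixHL}) by taking $A = D = r_0(T)$ so that every coding node of $S$ realizing the singleton type lies in $\Ext_S(B;X^*)$; all the work happens inside that theorem. Its forcing $\bP$ does \emph{not} have conditions that are ``a finite piece of a would-be subtree together with a commitment to a color.'' Instead, a condition is a finite level set of nodes in $T$ indexed by $d \times \vec\delta_p \cup \{d\}$ with $\vec\delta_p$ a finite subset of a large cardinal $\kappa$ chosen so that $\kappa \ra (\aleph_1)^{2d}_{\aleph_0}$ holds; the forcing adds $\kappa$-many branches through each $T_i$, names an ultrafilter $\dot{\mathcal{U}}$ on the set of critical-node lengths, and then applies the \Erdos--Rado theorem to a ground-model coloring $f$ on $[\kappa]^{2d}$ to produce a homogeneous set $K$ from which one extracts compatible conditions $p_{\vec\al}$ all deciding the \emph{same} color $\varepsilon^*$. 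Crucially, the paper is explicit that ``we never actually go to a generic extension'' and that ``the forced generic object is very much \emph{not} a coding tree.'' The forcing is used only to locate ground-model finite witnesses level by level, with the final tree $S$ constructed by an inductive fusion argument in the ground model, not read off a generic filter.

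The concrete gap in your proposal is the density lemma selecting the color $\varepsilon$. You claim that if no color works, then for each $\varepsilon$ some finite condition $p_\varepsilon$ blocks it, and that these finitely many failures can be ``pieced together'' into a single finite substructure with no valid one-point extension of type $\sigma$ in any color. This does not go through. First, the conditions $p_\varepsilon$ need not be pairwise compatible, so you cannot take a common extension. Second, even if they were compatible, ``no extension of $p_\varepsilon$ commits to $\varepsilon$'' is a statement about a fixed condition; unioning such statements over $\varepsilon$ gives a condition below which some color might still be forced cofinally, just not the color that condition's index came from. There is no contradiction with the Extension Property. The whole point of the $\kappa$-branch forcing and the \Erdos--Rado step in the paper is precisely to manufacture a single color that works simultaneously for a compatible family of conditions $\{p_{\vec\al} : \vec\al \in \vec{J}\}$ (Lemmas~\ref{lem.onetypes}--\ref{lem.compat}); your proposal replaces this machinery with a hand-wave and the argument does not close.
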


Theorem \ref{thm.indivisibility} provides new  classes of examples of indivisible \Fraisse\ structures, in particular for ordered structures,
while recovering results in
\cite{El-Zahar/Sauer89},
\cite{Komjath/Rodl86},  and
\cite{El-Zahar/Sauer94}.

In Part II,
we characterize the exact big Ramsey degrees for all \Fraisse\  limits in
finite relational languages
with relation symbols of arity at most two
satisfying \EEAP$^+$, or a related property called LSDAP$^+$.
 Our characterization, together with results of
Zucker in \cite{Zucker19}, imply that   such \Fraisse\ limits 
admit big Ramsey structures, and their automorphism groups have metrizable universal completion flows.

\begin{thm}\label{thm.main}
Let $\mathcal{K}$ be a  \Fraisse\  class
in a finite relational language
with relation symbols of arity at most two
such that the \Fraisse\ limit  
$\bK$
of $\mathcal{K}$ has
\EEAP$^+$ or LSDAP$^+$. 
Then
$\bK$
has finite big Ramsey degrees which have a simple characterization and, moreover,
admits  a big Ramsey structure.
Hence, the topological group
{\rm{Aut(Flim}}$(\bK))$
 has a metrizable universal completion flow, which is unique up to isomorphism.
\end{thm}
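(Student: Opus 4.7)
The plan is to combine the 1-type coding tree framework developed earlier in Part~I with an infinitary Ramsey theorem on these trees, and then to invoke Zucker's correspondence for the topological-dynamical conclusion. Fix an enumeration of $\bK$ in order type $\omega$ and let $\bT(\bK)$ denote the tree whose nodes at level $n$ are the 1-types over the first $n$ enumerated elements, with a designated coding node at each level representing the next element of $\bK$. Under \EEAP$^+$, the Diagonal Coding Tree Property allows us to pass to a subtree in which coding nodes occur at distinct levels, and the Extension Property guarantees that every required 1-type is realized. A finite substructure $\bfA \subseteq \bK$ is then represented by an antichain of coding nodes together with the quantifier-free diagram it induces inside $\bT(\bK)$, and the set of such \emph{similarity types} realizing a fixed $\bfA$ is finite because the language is finite and of arity at most two.

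The heart of the proof is a Milliken--Halpern--L\"auchli style theorem for $\bT(\bK)$: for any finite coloring of the occurrences of a fixed diagonal similarity type in $\bT(\bK)$, there is a strong subtree isomorphic to $\bT(\bK)$ on which the coloring becomes constant. I would establish this by a level-by-level fusion argument that upgrades the forcing construction used in the proof of Theorem~\ref{thm.indivisibility}. The crucial new ingredient is that at each inductive step, once the forcing produces a coloring of coding nodes by a finite palette, Theorem~\ref{thm.indivisibility} itself serves as the pigeonhole principle: it yields a subcopy of $\bK$ (and hence, via the coding correspondence, of $\bT(\bK)$) on which the residual coloring is constant. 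The restriction to arity at most two is what makes the diagonal encoding faithful: each pair of coding nodes is adequately represented by a common ancestor, so that strong subtree operations preserve all relevant relational information. Applying this Ramsey theorem once for each similarity type yields the upper bound on $T(\bfA,\bK)$, while a matching lower bound is produced by diagonalizing, for each similarity type, against all self-embeddings of $\bK$ to build a coloring that makes that type unavoidable on every subcopy. Together these establish finite big Ramsey degrees with the promised simple characterization, and the LSDAP$^+$ case is handled in parallel by localizing the argument to those substructures on which local diagonal amalgamation holds.

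For the final clause, expand $\bK$ by finitely many relation symbols recording the similarity type of each tuple of a given arity, obtaining an expansion $\bK^{*}$; the simple characterization just established is exactly what is needed to verify that $\bK^{*}$ is a big Ramsey structure in Zucker's sense, at which point Zucker's theorem delivers a metrizable universal completion flow for $\mathrm{Aut}(\mathrm{Flim}(\bK))$ together with its uniqueness up to isomorphism. I expect the main obstacle to be the Milliken-style theorem itself: one must adapt the forcing argument from the indivisibility proof, which colors only singletons, to handle arbitrary finite configurations of coding nodes while simultaneously preserving the strong subtree structure, the diagonal property, and the forbidden-configuration constraints inherited from $\mathcal{K}$. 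Verifying that the fusion step still admits an unbounded search, and that the output subtree remains a faithful coding of some isomorphic copy of $\bK$ in $\mathcal{K}$, will be the most delicate part of the argument.
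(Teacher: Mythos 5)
This theorem is only stated, not proved, in Part~I; the paper explicitly defers the proof to Part~II (\cite{CDPII}) and says that Theorem~\ref{thm.matrixHL} ``will serve as the starting point for proving Theorem~\ref{thm.main} in Part~II.'' So there is no internal proof to compare against word-for-word, but your overall architecture (diagonal coding trees of $1$-types, a Milliken-style Ramsey theorem on them proved by forcing and fusion, a count of similarity types for the upper bound, a matching lower bound, and then Zucker's correspondence for the final topological-dynamics clause) is aligned with the outline the authors sketch in the introduction.

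The one substantive misidentification is the pigeonhole. You write that ``Theorem~\ref{thm.indivisibility} itself serves as the pigeonhole principle,'' but indivisibility is a coloring statement about singleton substructures, i.e.\ about individual coding nodes. That is too weak to drive the inductive fusion that produces a Milliken-type theorem: at each level of the construction you must stabilize an entire \emph{level set} of extensions of a fixed finite approximation, matching passing types and the splitting/coding pattern simultaneously, not just a single new vertex. The correct pigeonhole is the Level Set Ramsey Theorem (Theorem~\ref{thm.matrixHL}), of which indivisibility is the special case $A = D = r_0(T)$, $\tilde{C}$ a single coding node. The paper is explicit that \emph{this} theorem is the engine passed to Part~II. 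If you try to run the induction with bare indivisibility, you lose control over how the branching non-coding nodes extend alongside the coding node at each step, and the fused subtree need no longer be similar to $\bT$. Two smaller points: you should also be aware that the LSDAP$^+$ case is not simply ``handled in parallel'' --- the paper's own remark after Theorem~\ref{thm.indivisibility} flags that more work is required and that even indivisibility for LSDAP$^+$ structures is only established in Part~II; and the claimed ``simple characterization'' rests on the authors obtaining exact degrees directly from the diagonal coding trees, explicitly avoiding the envelope method, so your lower-bound sketch (diagonalizing against self-embeddings) would need to be worked out in a way compatible with the similarity-type count rather than just asserted.
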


Theorem \ref{thm.main} provides new classes of examples
 of big Ramsey structures while
 recovering results in
 \cite{DevlinThesis},
\cite{HoweThesis},
 \cite{Laflamme/NVT/Sauer10}, and
 \cite{Laflamme/Sauer/Vuksanovic06}
and extending special cases of the results in \cite{Zucker20} to obtain exact big Ramsey degrees.
Theorem \ref{thm.matrixHL}  in this paper will serve as the starting point for proving 
Theorem \ref{thm.main} in Part II.

We now discuss
 several theorems which  follow from  Theorem  \ref{thm.indivisibility} or Theorem \ref{thm.main},
 as well as  new examples
 obtained  from our results.
 A fuller description is provided in
Section 5 of  Part II  (\cite{CDPII}).

We show  in Part II that \EEAP$^+$ holds for  disjoint
 amalgamation classes which are ``unrestricted.''
 Particular instances 
 of unrestricted classes 
include 
 classes of
  structures with finitely many unary and binary relations such as
graphs, directed graphs,
tournaments,  graphs with several edge relations, etc., as well as
their ordered versions.
Our examples encompass those
unconstrained
binary relational structures considered in \cite{Laflamme/Sauer/Vuksanovic06}
as well as their ordered expansions.
We also  show in Part II that \EEAP$^+$ holds for \Fraisse\
limits
of  free amalgamation classes which  forbid $3$-irreducible substructures, namely, substructures in which any three distinct elements appear  in a tuple for which some relation holds, as well as for their ordered versions.
Hence, Theorem \ref{thm.indivisibility} implies that all unrestricted classes,
all free amalgamation classes which  forbid $3$-irreducible substructures, and their ordered expansions have \Fraisse\ limits which are indivisible.
Theorem  \ref{thm.main} implies that such classes with relation symbols of arity at most two have \Fraisse\ limits with  big Ramsey degrees which have a simple characterization.
See Propositions 5.2 and 5.4 and Theorem 5.5 of \cite{CDPII}
 for more details.

Our methods also apply to 
certain  \Fraisse\ structures
derived from the rational linear order.
In Part II, we will show that 
the structure
$\bQ_{\bQ}$, 
the dense linear order without endpoints with an equivalence relation such that  all equivalence classes are convex copies of the rationals,
satisfies a related property called LSDAP$^+$.
Theorem \ref{thm.main} (and hence also 
 the conclusion of 
Theorem \ref{thm.indivisibility})  holds for  $\bQ_{\bQ}$,
answering  a question raised by Zucker at the 2018 Banff Workshop on {\em Unifying Themes in Ramsey Theory}.
More generally, we show that 
members of
a natural hierarchy of finitely many convexly ordered equivalence relations, where each  successive  equivalence relation coarsens the previous one, also admit big Ramsey structures with a simple characterization.
  Theorem \ref{thm.main} 
  recovers known results 
including
Devlin's  characterization of  the  big Ramsey degrees of the rationals \cite{DevlinThesis}
 as well as
 results
 of Laflamme, Nguyen Van Th\'{e}, and Sauer  in
 \cite{Laflamme/NVT/Sauer10}
 characterizing the big Ramsey degrees of 
 $\bQ_n$,  the rational
linear order with a partition into $n$ dense
pieces, as these structures satisfy SDAP$^+$.
See Theorem 5.12 of  Part II for more details.

While many of the known big Ramsey degree results use sophisticated versions of Milliken's Ramsey theorem for trees \cite{Milliken79},
and while proofs using the method of forcing to produce  ZFC results have appeared in \cite{DobrinenRado19}, \cite{DobrinenH_k19}, \cite{DobrinenJML20}, and  \cite{Zucker20},
there are  three  novelties to our approach  which produce a clarity about indivisibility and more generally, about  big Ramsey degrees.
Given a \Fraisse\ class $\mathcal{K}$,
we fix an  enumerated \Fraisse\ limit of $\mathcal{K}$, which we denote by $\bK$.
By {\em enumerated \Fraisse\ limit}, we mean that the universe of $\bK$ is ordered via the natural numbers.
The first novelty is that  we
work with trees of   quantifier-free $1$-types
(see Definition
\ref{defn.treecodeK})
and develop  forcing arguments
directly on them to prove the Level Set Ramsey Theorem
(Theorem \ref{thm.matrixHL}).
It was suggested to the second author  by Sauer during the 2018 BIRS Workshop,
{\em Unifying Themes in Ramsey Theory}, to try moving the forcing methods from \cite{DobrinenH_k19}  and \cite{DobrinenJML20} to forcing directly on the structures.
Using trees of quantifier-free $1$-types seems to come as close as possible to fulfilling this request, as the $1$-types  allow one to see the essential hidden structure (the interplay of a well-ordering of the universe with first instances where $1$-types disagree), whereas working only on   the \Fraisse\ structures, with no reference to $1$-types, obscures this central feature of big Ramsey degrees from view.
We will be calling such trees {\em coding trees}, as there will be special nodes, called {\em coding nodes},
 representing the vertices of
 $\bK$:
 The $n$-th coding node will be the quantifier-free $1$-type of the  $n$-th vertex of $\bK$ over the
 substructure of $\bK$
 induced on the  first $n-1$ vertices of $\bK$.
 (The $0$-th coding node is the quantifier-free $1$-type of the $0$-th vertex over the empty
 set.)

 The second  novelty of our approach  is that  our Level Set Ramsey Theorem
 (Theorem \ref{thm.matrixHL}) immediately yields indivisibility for all \Fraisse\ structures satisfying SDAP$^+$ with finitely many relations of any finite arity.
This is a consequence of forcing on diagonal coding trees,  developed in the second author's work for big Ramsey degrees of Henson graphs 
in \cite{DobrinenJML20} and \cite{DobrinenH_k19}.
It is interesting to note that in general, indivisibility does not follow from forcing on widely branching coding trees; diagonal coding trees are necessary to obtain indivisibility directly.
The third novelty is that we find the exact big Ramsey degrees directly from the diagonal coding trees of $1$-types, without appeal to the standard method of ``envelopes''.
This means that
the upper bounds which we  find via forcing arguments
 are shown to be exact.

Using trees
of
quantifier-free
$1$-types
(partially ordered by inclusion)
allows us to prove   a  characterization of  big Ramsey degrees for \Fraisse\ limits with \EEAP$^+$ which is a simple extension of the so-called ``Devlin types'' for the rationals in \cite{DevlinThesis},
and of the  characterization of the big Ramsey degrees of the  Rado graph achieved by Laflamme, Sauer, and Vuksanovic in \cite{Laflamme/Sauer/Vuksanovic06}.
Here, we present  the characterization for  structures  without unary relations.
The full  characterization is given in  Part II.

\begin{CharBRDSDAP}
Let $\mathcal{L}$   be a  language
consisting of
finitely many  relation symbols, each of arity
two.
Suppose $\mathcal{K}$ is a \Fraisse\ class in $\mathcal{L}$
such that the \Fraisse\ limit
$\bK$
of $\mathcal{K}$ satisfies
\EEAP$^+$ or LSDAP$^+$.
Fix a structure  $\bfA\in\mathcal{K}$.
 Let
 $(\bfA,<)$ denote
  $\bfA$ together with a fixed enumeration
  $\lgl\mathrm{a}_i:i<n\rgl$
 of the universe
 of $\bfA$.
We say that a tree $T$ is a
{\em diagonal tree coding $(\bfA,<)$}
if  the following hold:
\begin{enumerate}
\item
$T$ is a finite tree with $n$ terminal nodes and
 branching degree two.
\item
$T$ has  at most one branching node  in
any given
level,
and
no two distinct nodes from among the branching nodes and terminal nodes have the same length.
Hence, $T$ has $2n-1$ many levels.
\item
Let $\lgl \mathrm{d}_i:i<n\rgl$ enumerate the terminal nodes in $T$ in order of increasing length.
Let $\bfD$ be the $\mathcal{L}$-structure induced on the set $\{\mathrm{d}_i:i<n\}$ by the increasing
bijection from   $\lgl\mathrm{a}_i:i<n\rgl$ to $\lgl \mathrm{d}_i:i<n\rgl$, so that $\bfD \cong \bfA$.
Let $\tau_i$ denote the quantifier-free
$1$-type of $\mathrm{d}_i$ over
$\bfD_i$,
the substructure of
$\bfD$ on vertices $\{\mathrm{d}_m:m<i\}$.
Given  $i<j<k<n$,
if $\mathrm{d}_j$ and $\mathrm{d}_k$
both
extend
some node
in $T$
that is at the same level as $\mathrm{d}_i$,
then $\mathrm{d}_j$ and $\mathrm{d}_k$
 have the same quantifier-free $1$-types over
$\mathbf{D}_i$.
That is,
$\tau_j\re \mathbf{D}_i
=\tau_k\re \mathbf{D}_i$.
\end{enumerate}
Let $\mathcal{D}(\bfA,<)$ denote the number of distinct diagonal trees coding $(\bfA,<)$;
let $\mathcal{OA}$ denote a set
consisting of
one representative from each isomorphism class of ordered copies of $\bfA$.
Then
 $$
 T(\bfA,\bK)
=
\sum_{(\bfA,<)\in\mathcal{OA}}\mathcal{D}(\bfA,<)
$$
\end{CharBRDSDAP}

If $\mathcal{L}$ also has unary relation symbols,
in the case that
$\mathcal{K}$ is a free amalgamation class, the simple characterization above holds when modified to  diagonal coding trees with the same number of roots as unary relations.
In the case that $\mathcal{K}$ contains a transitive relation,
then the  above characterization still holds.
The full characterization will be given in Theorem 4.8 in Part II.

 \vskip.1in

\it Acknowledgements. \rm
The second author thanks
Norbert Sauer for discussions at the  2018 Banff Workshop on
{\it Unifying Themes in Ramsey Theory},
where he suggested trying
  to move  the forcing  directly on the structures.
She also thanks
 Menachem Magidor
 for hosting her
  at the Hebrew University of Jerusalem in December 2019, and  for fruitful discussions on big Ramsey degrees during that time.
She thanks Itay Kaplan for  discussions on big Ramsey degrees and  higher arity relational structures during that visit, and Jan \Hubicka\ for helpful conversations.  The third author thanks Nathanael Ackerman, Cameron Freer and Lynn Scow for extensive and clarifying discussions.
All three authors thank Jan \Hubicka\ and Mat\v{e}j Kone\v{c}n\'{y} for pointing out  a mistake in an earlier version.


\section{Amalgamation Properties}\label{sec.Structures}


The inspiration for the amalgamation property \EEAP\ defined in this section comes from a
strengthening of the free amalgamation property,
which we  call the \SFAPnonacronym\ (\SFAP).
We originally found that any binary relational  \Fraisse\
structure  with an age satisfying  \SFAP\
has finite  big Ramsey degrees that are characterized in a manner  similar to  the characterizations,
  in \cite{Laflamme/Sauer/Vuksanovic06},
 of big Ramsey degrees
  for the
  Rado graph and other
 unconstrained
  binary relational structures
  with
  disjoint
  amalgamation.
  \SFAP\
  is satisfied by
  the ages of
  all
  unconstrained
  relational
structures having free amalgamation,
  as well as
by \Fraisse\ classes
  with forbidden
  irreducible and
  $3$-irreducible substructures.
 The \EEAPnonacronym\ (\EEAP)
  is a natural
 extension
 of \SFAP\
  to a broader collection of \Fraisse\ classes with
 disjoint amalgamation.
 

In Subsection \ref{subsec.Fcrs}
we review  the basics of \Fraisse\ theory,
the Ramsey property,
and indivisibility.
 Big Ramsey degrees and big Ramsey structures will be discussed in Section 2 of Part II.
More general background  on \Fraisse\ theory can be found in \Fraisse's original paper  \cite{Fraisse54},   as well as
\cite{HodgesBK97}.
  The
 properties
 \SFAP\ and
 \EEAP\
 are presented in
 Subsection \ref{subsec.EEAP}.
The presentation of   \EEAP$^+$  will  be given in Definition \ref{defn_EEAP_newplus},
 after   coding trees of $1$-types and related notions are defined in Section \ref{sec.sct}.


\subsection{
\Fraisse\ theory and indivisibility}\label{subsec.Fcrs}

All relations in this paper will be finitary,
and all languages will consist of finitely many
relation symbols (and no constant or function symbols).
We use the set-theoretic notation $\om$ to denote the set of natural numbers, $\{0,1,2,\dots\}$
and treat $n \in \om$ as the set $\{i\in\om:i<n\}$.

Let  $\mathcal{L}=\{R_i:i< I\}$ be a finite
language
 where each $R_i$ is a relation symbol with associated
arity $n_i\in \om$.
An {\em $\mathcal{L}$-structure} is an object
\begin{equation}
\bfM=\lgl \M, R_0^{\bfM},\dots, R_{I-1}^{\bfM}\rgl
\end{equation}
where
$\M$ is a nonempty set, called the \emph{universe} of $\bfM$,
and each $R^{\bfM}_i\sse \M^{n_i}$.
Finite structures will typically be denoted by $\bfA,\bfB$, etc.,
and  their universes  by $\A,\B$, etc.  Infinite structures will typically be denoted by $\bJ, \bK$ and their universes
by $\mathrm{J}, \K$. We will call the elements of the universe of a structure {\em vertices}.

An {\em embedding} between
$\mathcal{L}$-structures
$\bfM$ and $\bfN$
is an injection $\iota:\M\ra \N$ such that for each $i<I$
and for all $a_0, \ldots , \, a_{n_i - 1} \in \M$,
\begin{equation}
R_i^{\bfM}(a_0,\dots,a_{n_i - 1})\Longleftrightarrow
R_i^{\bfN}(\iota(a_0),\dots,\iota(a_{n_i - 1})).
\end{equation}
A surjective embedding is an \emph{isomorphism}, and an isomorphism from
$\bfM$ to
$\bfM$
is an {\em automorphism}.
The set of embeddings of $\bfM$ into $\bfN$ is denoted $\text{Emb}(\bfM, \bfN)$, and
the set of automorphisms
of $\bfM$ is denoted $\text{Aut}(\bfM)$.
When
$\M \subseteq \N$ and the inclusion map is an embedding, we say
$\bfM$ is a {\em substructure} of
$\bfN$.
When there exists an embedding $\iota$ from $\bfM$ to $\bfN$, the substructure of $\bfN$ having universe $\iota[\M]$ is
called a {\em copy} of $\bfM$ in $\bfN$, and it is a {\em subcopy} of $\bfN$ if $\bfM$ is isomorphic to $\bfN$. The {\em age} of $\bfM$,
written Age($\bfM$),
is the class of all finite $\mathcal{L}$-structures that embed into $\bfM$.
We write $\bfM \le \bfN$
when there is an embedding of
$\bfM$ into $\bfN$, and $\bfM\cong\bfN$ when there is an isomorphism from
$\bfM$ to $\bfN$.

A class $\mathcal{K}$ of finite structures
in
a
finite
relational language
 is called a {\em \Fraisse\ class}  if it is
nonempty,  closed under isomorphisms,
 hereditary, and satisfies the joint embedding and amalgamation properties.
The class $\mathcal{K}$ is  {\em hereditary} if whenever $\bfB\in\mathcal{K}$ and  $\bfA\le\bfB$, then also $\bfA\in\mathcal{K}$.
The class $\mathcal{K}$ satisfies the {\em joint embedding property} if for any $\bfA,\bfB\in\mathcal{K}$,
there is a $\bfC\in\mathcal{K}$ such that $\bfA\le\bfC$ and $\bfB\le\bfC$. The class
 $\mathcal{K}$ satisfies the {\em amalgamation property} if for any embeddings
$f:\bfA\ra\bfB$ and $g:\bfA\ra\bfC$, with $\bfA,\bfB,\bfC\in\mathcal{K}$,
there is a $\bfD\in\mathcal{K}$ and  there are embeddings $r:\bB\ra\bfD$ and $s:\bfC\ra\bfD$ such that
$r\circ f = s\circ g$.
 Note that in a finite relational language, there are only countably many finite structures up to isomorphism.

An $\mathcal{L}$-structure $\bK$ is called {\em ultrahomogeneous} if
every isomorphism between finite substructures of $\bK$  can be extended to an
automorphism of $\bK$.
We call a
countably infinite, ultrahomogeneous structure
a {\em \Fraisse\ structure}.
\Fraisse\ showed \cite{Fraisse54} that the age of a \Fraisse\ structure is a \Fraisse\ class, and that conversely,
given
a \Fraisse\ class $\mathcal{K}$, there is, up to isomorphism, a unique
\Fraisse\ structure
whose age is $\mathcal{K}$.
Such a
\Fraisse\
structure is called the {\em \Fraisse\ limit} of $\mathcal{K}$
or the {\em generic} structure for $\mathcal{K}$.

Throughout this paper, $\bK$ will denote the \Fraisse\ limit of a \Fraisse\ class $\mathcal{K}$.
We will sometimes write Flim$(\mathcal{K})$ for $\bK$.
We will assume  that $\bK$ has universe $\om$, and call such a structure
an {\em enumerated \Fraisse\ structure}.
 For $m<\om$, we let $\bK_m$
 denote the substructure of $\bK$ with universe $m = \{0, 1, \ldots , m -1\}$.

The following amalgamation property will be assumed in
this paper:
A \Fraisse\ class $\mathcal{K}$ satisfies the  {\em Disjoint Amalgamation Property}
if,
given
embeddings
$f:\bfA\ra\bfB$ and $g:\bfA\ra\bfC$, with $\bfA,\bfB,\bfC\in\mathcal{K}$,
there is an amalgam  $\bfD\in\mathcal{K}$ with  embeddings $r:\bB\ra\bfD$ and $s:\bfC\ra\bfD$ such that
$r\circ f = s\circ g$ and  moreover,
$r[\mathrm{B}]\cap s[\mathrm{C}]=r\circ f[\mathrm{A}]=s\circ g[\mathrm{A}]$.
The disjoint amalgamation property is also called  the {\em strong amalgamation property}.
It
is
 equivalent  to the {\em strong embedding property},
 which
 requires that
 for any $\bfA\in\mathcal{K}$, $v\in\mathrm{A}$, and embedding $\varphi:(\bfA-v) \ra\bK$,
there are infinitely many different extensions of $\varphi$ to embeddings of $\bfA$ into $\bK$.  (See \cite{CameronBK90}.)

A \Fraisse\ class has the {\em Free Amalgamation Property}
if  it satisfies
the Disjoint Amalgamation Property
and moreover,
the amalgam $\bfD$ can be chosen so that no tuple satisfying a relation in $\bfD$ includes elements of both
$r[\mathrm{B}] \setminus r\circ f[\mathrm{A}]$ and $s[\mathrm{C}] \setminus s\circ g[\mathrm{A}]$; in other words, $\bfD$ has
no additional relations on its universe other than those inherited from $\bfB$ and $\bfC$.

For languages $\mathcal{L}_0$ and $\mathcal{L}_1$
 such that $\mathcal{L}_0\cap\mathcal{L}_1=\emptyset$, and given
\Fraisse\ classes
$\mathcal{K}_0$ and $\mathcal{K}_1$ in $\mathcal{L}_0$ and $\mathcal{L}_1$, respectively,
 the {\em free superposition} of $\mathcal{K}_0$ and $\mathcal{K}_1$ is the \Fraisse\ class consisting of
all
 finite $(\mathcal{L}_0\cup\mathcal{L}_1)$-structures $\bfA$
 such that
  the $\mathcal{L}_i$-reduct of $\bfA$ is in $\mathcal{K}_i$,  for each $i<2$.
(See also \cite{Bodirsky15} and \cite{Hubicka_CS20}.)  Note that the free superposition of $\mathcal{K}_0$ and $\mathcal{K}_1$ has free  amalgamation if and only if each $\mathcal{K}_i$ has free amalgamation; and similarly for disjoint amalgamation.


Given a \Fraisse\ class $\mathcal{K}$ and substructures
$\bfM,\bfN$
of $\bK$  (finite or infinite)
 with $\bfM\le\bfN$,
we use
 $\binom{\bfN}{\bfM}$
to denote the set of all substructures of
$\bfN$ which are isomorphic to
$\bfM$. Given
$\bfM\le\bfN\le\bfO$,
substructures of $\bK$, we write
$$
\bfO\ra(\bfN)_{\ell}^{\bfM}
$$
to denote that for each coloring of
$\binom{\bfO}{\bfM}$
into $\ell$ colors, there is an
 $\bfN' \in \binom{\bfO}{\bfN}$
 such that
$\binom{\bfN'}{\bfM}$
is  {\em monochromatic}, meaning that
 all members of
 $\binom{\bfN'}{\bfM}$
 have the same color.



\begin{defn} \label{defn.indiv}
A \Fraisse\ structure $\bK$ is {\em indivisible} if for every singleton
substructure $\bA$ of $\bK$, 
$\bK\ra(\bK)_{\ell}^{\bfA}$ for every positive integer $\ell$.
\end{defn}

Note that when
there is only one quantifier-free 1-type over the
empty set satisfied by elements of $\bK$, so that $\bK$ has exactly one singleton substructure up to isomorphism,
indivisibility amounts to saying
that for any partition of the universe of $\bK$ into finitely many pieces, there is a subcopy of $\bK$ contained in one of the pieces.
Indivisibility
has been proved for  many structures,  including the triangle-free Henson graph in
\cite{Komjath/Rodl86},  the $k$-clique-free Henson graphs for all $k\ge 4$ in  \cite{El-Zahar/Sauer89},
more general binary relational free amalgamation structures in
\cite{Sauer03}, and for
$k$-uniform hypergraphs,
$k \ge 3$, that omit
finite substructures
in which all unordered
triples of vertices are contained in
at least one $k$-edge in
\cite{El-Zahar/Sauer94}.
For a  much broader discussion of \Fraisse\ structures and indivisibility, the reader is referred to
Nguyen Van Th\'{e}'s Habilitation \cite{NVTHabil}.


\subsection{Substructure Amalgamation Properties}\label{subsec.EEAP}

Recall that given
a \Fraisse\ class $\mathcal{K}$
in a finite relational language $\mathcal{L}$,
we let $\bK$ denote
an enumerated
\Fraisse\ limit of $\mathcal{K}$
with underlying set $\om$.
All results will hold regardless of which
enumeration
is chosen.
We make the following conventions and assumptions,
which will hold in the rest of this paper.

All types will be quantifier-free $1$-types, over a finite parameter set, that are realizable in $\bK$.
 With one exception, all such types will be complete; the exception is the case of ``passing types'', defined in  Section \ref{sec.sct}, which may be partial.
Complete types  will be denoted simply ``tp''.

We will assume that
for any relation symbol $R$ in $\mathcal{L}$, $R^{\bK}(\bar{a})$ can hold only for tuples
$\bar{a}$ of {\em distinct} elements of
$\om$.
In particular, we assume our structures have
 no loops.
We further assume that all relations
in $\bK$
 are  {\em non-trivial}:
 This means that
 for each relation symbol $R$ in $\mathcal{L}$, there exists a $k$-tuple $\bar{a}$ of (distinct) elements of $\om$ such that
 $R^{\bK}(\bar{a})$ holds, and a
$k$-tuple
  $\bar{b}$ of (distinct) elements of $\om$ such that $\neg R^{\bK}(\bar{b})$ holds.
 Since $\mathcal{K}$ has disjoint amalgamation by assumption, non-triviality will imply that there are
  infinitely many $k$-tuples from $\om$
 that satisfy $R^{\bK}$,
  and infinitely many that do not.
 We will further hold to  the following convention: 
 
 
 \begin{convention}\label{conv.unary}
 If
 $\mathcal{L}$ has at least one  unary relation
 symbol,
 then
 letting
  $R_0,\dots,$ $R_{n-1}$ list 
  them,
  we have that $n\ge 2$ and for each $\bfA\in\mathcal{K}$ and each $a\in\mathrm{A}$,
  $R_i^{\bfA}(a)$
 holds for exactly one $i<n$.
\end{convention}
By possibly adding new unary relation
symbols
to the language, any \Fraisse\ class with
unary relations can be assumed to meet this convention.

Finally, we assume that there is at least one non-unary relation symbol in $\mathcal{L}$.  This poses no real restriction,
as whenever a finite language has only unary relation symbols, any disjoint amalgamation class in that language will have a \Fraisse\ limit
that consists
of finitely many disjoint copies of $\om$, with vertices in a given copy all realizing the same quantifier-free 1-type over the empty set.
In this case, finitely many applications of Ramsey's Theorem will prove the existence of finite big Ramsey degrees.

We now present the Substructure Free Amalgamation Property.
This property also
provides the intuition behind the more general  amalgamation property \EEAP\  (Definition \ref{defn.EEAP_new}),
laying the foundation for the main ideas of this paper.

\begin{defn}[\SFAP]\label{defn.SFAP}
A \Fraisse\ class $\mathcal{K}$ has the
{\em  \SFAPnonacronym\ (\SFAP)} if $\mathcal{K}$ has free amalgamation,
and given  $\bfA,\bfB,\bfC,\bfD\in\mathcal{K}$, the following holds:
Suppose
\begin{enumerate}
\item[(1)]
$\bfA$  is a substructure of $\bfC$, where
 $\bfC$ extends  $\bfA$ by two vertices,
say $\mathrm{C}\setminus\mathrm{A}=\{v,w\}$;

\item[(2)]
 $\bfA$  is a substructure of $\bfB$ and
 $\sigma$ and $\tau$  are
 $1$-types over $\bfB$  with   $\sigma\re\bfA=\type(v/\bfA)$ and $\tau\re\bfA=\type(w/\bfA)$; and
\item[(3)]
$\bfB$ is a substructure of $\bfD$ which extends
 $\bfB$ by one vertex, say $v'$, such that $\type(v'/\bfB)=\sigma$.

\end{enumerate}
  Then there is
an   $\bfE\in\mathcal{K}$ extending  $\bfD$ by one vertex, say $w'$, such that
  $\type(w'/\bfB)=\tau$, $\bfE\re (
  \mathrm{A}\cup\{v',w'\})\cong \bfC$,
  and $\bfE$ adds no other relations over $\mathrm{D}$.
\end{defn}
The  definition of \SFAP\ can be stated using embeddings rather than substructures
in the standard way.
We remark that
requiring $\bfC$ in (1) to have only two more vertices than $\bfA$ is
sufficient for all our uses of the property in proofs of big Ramsey degrees,
and hence we have not formulated the property for $\bfC$ of arbitrary finite size.

\begin{rem}\label{rem.simple}
\SFAP\ is equivalent to
free amalgamation along with
 a model-theoretic property that may be termed {\em free 3-amalgamation}, a special case of the {\em disjoint 3-amalgamation} property defined
 in \cite{Kruckman19}: In the definition of disjoint $n$-amalgamation in Section 3 of \cite{Kruckman19}, take $n=3$ and impose the further condition that the ``solution'' or 3-amalgam disallows any relations (in any realization of the solution) that were not already stipulated in the initial 3-amalgamation ``problem''.
Kruckman shows
in \cite{Kruckman19} that if the age of a \Fraisse\ limit $\bK$ has disjoint amalgamation and disjoint 3-amalgamation, then $\bK$ exhibits a model-theoretic tameness property called  \emph{simplicity}.
\end{rem}

\SFAP\
ensures that  a finite substructure of a given
enumerated \Fraisse\ structure can be extended as desired without any requirements on its configuration  inside the larger structure.
\SFAP\
precludes any need for  the so-called ``witnessing properties'' which were necessary for the proofs of finite big Ramsey degrees
for
constrained binary
 free amalgamation classes,
as in
the  $k$-clique-free Henson graphs in \cite{DobrinenJML20} and \cite{DobrinenH_k19}, and the recent  more general extensions in  \cite{Zucker20}.
Free amalgamation classes with forbidden $3$-irreducible substructures satisfy \SFAP, as
shown in  Proposition 5.2 of Part II. 

The next amalgamation property extends \SFAP\ to disjoint amalgamation classes.
 In  the
 definition, we again use substructures rather than embeddings.

\begin{defn}[\EEAP]\label{defn.EEAP_new}
A \Fraisse\ class $\mathcal{K}$ has the
{\em  \EEAPnonacronym\ (\EEAP)} if $\mathcal{K}$
has disjoint amalgamation,
and the following holds:
Given   $\bfA, \bfC\in\mathcal{K}$, suppose that
 $\bfA$ is a substructure of $\bfC$, where   $\bfC$ extends  $\bfA$ by two vertices, say $v$ and $w$.
Then there exist  $\bfA',\bfC'\in\mathcal{K}$, where
$\bfA'$
contains a copy of $\bfA$ as a substructure
and
$\bfC'$ is a disjoint amalgamation of $\bfA'$ and $\bfC$ over $\bfA$, such that
letting   $v',w'$ denote the two vertices in
 $\mathrm{C}'\setminus \mathrm{A}'$ and
assuming (1) and (2), the conclusion holds:
 \begin{enumerate}
 \item[(1)]
Suppose
$\bfB\in\mathcal{K}$  is any structure
 containing $\bfA'$ as a substructure,
and let
 $\sigma$ and $\tau$  be
  $1$-types over $\bfB$  satisfying    $\sigma\re\bfA'=\type(v'/\bfA')$ and $\tau\re\bfA'=\type(w'/\bfA')$,
\item[(2)]
Suppose
$\bfD\in \mathcal{K}$  extends  $\bfB$
by one vertex, say $v''$, such that $\type(v''/\bfB)=\sigma$.
\end{enumerate}
Then
  there is
an  $\bfE\in\mathcal{K}$ extending   $\bfD$ by one vertex, say $w''$, such that
  $\type(w''/\bfB)=\tau$ and  $\bfE\re (
  \mathrm{A}\cup\{v'',w''\})\cong \bfC$.
\end{defn}

\begin{rem}\label{rem.freesup}
We note that
\SFAP\ implies \EEAP,
 taking $\bfA'=\bfA$ and $\bfC'=\bfC$
and because disjoint amalgamation is implied by free amalgamation.
Further, it
follows from
their definitions that \SFAP\ and \EEAP\ are each
preserved  under free superposition.
 \end{rem}

\begin{example}
The idea behind allowing for an
extension
$\bfA'$ of $\bfA$  in the definition of \EEAP\
 is most simply  demonstrated
 for
  the \Fraisse\ class
  $\mathcal{LO}$
  of finite linear orders.
Given  $\bfA,\bfC\in \mathcal{LO}$,
 suppose
 $v,w$ are the two vertices of $\mathrm{C}\setminus\mathrm{A}$ and
 suppose that
  $v<w$ holds in $\bfC$.
  We can require $\bfA'$ to  be some extension of $\bfA$ in $\mathcal{K}$
 containing some vertex $u$
  so  that
the formula $(x<u)$ is in
$\type(v/\bfA')$ and
$(u<x)$ is in
$\type(w/\bfA')$, where $x$ is a variable.
Then given any $1$-types  $\sigma,\tau$
extending $\type(v/\bfA'), \type(w/\bfA')$, respectively,
 over some structure $\bfB$ containing $\bfA'$ as a substructure,
any  two
vertices $v',w'$ satisfying $\sigma,\tau$
will automatically satisfy $v'<w'$, thus producing a copy of $\bfC$ extending $\bfA$.

In the case of  finitely many independent  linear orders,
we can similarly  produce an
$\bfA'$ which ensures that
{\em any} vertices  $v',w'$ satisfying such  $\sigma,\tau$  as above  produce a copy of
$\bfC$ extending $\bfA$.
In more general cases,  the use of $\bfA'$ only ensures that {\em there exist} such vertices $v',w'$.
\end{example}

\begin{rem} \label{rem.weak3dap}
Ivanov \cite{Ivanov99} and independently, Kechris and Rosendal \cite{KechrisRosendal07}, have formulated a weakening of the amalgamation property which is called
{\em almost amalgamation} in \cite{Ivanov99} and {\em weak amalgamation} in \cite{KechrisRosendal07}.  This property arises in the context of generic automorphisms of countable structures.  In the presence of disjoint amalgamation, \EEAP\  may be thought of as a ternary version of weak amalgamation (one of several possible such versions), and as a ``weak'' version of the disjoint 3-amalgamation property from \cite{Kruckman19} (again, one of several possible such weakenings).
\end{rem}

\begin{rem}\label{rem.observe}
We note that we could have used the definition of the free 3-amalgama\-tion property from Remark \ref{rem.simple}, and of
an appropriately formulated version of a
``weak'' disjoint 3-amalgamation property as in Remark \ref{rem.weak3dap}.
We have chosen to use Definitions  \ref{defn.SFAP} and \ref{defn.EEAP_new} instead, as
they are the forms used in the proof of Theorem \ref{thm.matrixHL}.
\end{rem}

In Section \ref{sec.sct}
onwards
we will be working with
the new notion of
{\em coding trees}
of $1$-types, which represent subcopies of a given  \Fraisse\ limit $\bK$.
For
\Fraisse\ structures in languages
with
relation symbols
of arity greater than two,
a priori, these  trees may have unbounded branching.
However, for all classes with \SFAP\ and for all classes with \EEAP\ which we have investigated, one can construct subtrees with bounded branching which still represent $\bK$.
Accordingly, we will 
formulate the
strengthened version \EEAP$^+$ of \EEAP\ in Subsection \ref{subsec.SDAPplus}
which imposes conditions on the branching in a coding tree for $\bK$.



\section{Coding trees of \texorpdfstring{$1$-types for \Fraisse\ structures}{codingtrees}}\label{sec.sct}

Fix throughout a \Fraisse\ class $\mathcal{K}$ in a finite  relational language $\mathcal{L}$.
Recall that  $\bK$  denotes
an {\em enumerated \Fraisse\ limit}
for $\mathcal{K}$,  meaning that
 $\bK$ has universe $\om$.
In order to  avoid confusion, we shall usually use $v_n$ instead of just $n$
to denote the $n$-th member of the universe  of $\bK$, and we shall call this  the {\em $n$-th  vertex} of $\bK$.
For $n < \om$,  we write $\bK_n$, and sometimes $\bK\re n$, to denote the
substructure of $\bK$ on the set of vertices $\{v_i:i<n\}$.
We call $\bK_n$ an {\em initial segment} of $\bK$.
Note that $\bK_0$ is the empty structure.

In Subsection \ref{subsec.3.1},
we present
a general  construction of
 trees of complete $1$-types over initial segments of $\bK$,
which we call {\em coding trees}.
Graphics of coding trees
are then presented for various  prototypical \Fraisse\ classes which will be proved in Part II to satisfy \EEAP$^+$.
In Subsection \ref{subsec.3.2},
we define
{\em passing types}, extending  the notion
of  {\em passing number}
due to Laflamme, Sauer, and Vuksanovic in \cite{Laflamme/Sauer/Vuksanovic06},  which has been central to all prior results on big Ramsey degrees   for binary relational structures.
Then we
extend the notion
from \cite{Laflamme/Sauer/Vuksanovic06}
of {\em similarity type} for binary relational structures
to
structures with relations of any arity.
In Subsection \ref{subsec.3.3}, we introduce
 {\em diagonal}  coding trees.
These  will be
 key
to obtaining indivisibility directly from Theorem \ref{thm.matrixHL} as well as
 precise big Ramsey degree results without appeal to
 the method of
 envelopes in Part II.
We
define the \emph{Diagonal Coding Tree Property}, one of the conditions for \EEAP$^+$ to hold.



\subsection{Coding trees of $1$-types}\label{subsec.3.1}

All types will be quantifier-free 1-types, with variable $x$, over some finite initial segment of $\bK$.
For $n \ge 1$, a type over $\bK_n$ must contain the formula
$\neg (x = v_i ) $ for each $i < n$.
Given a type $s$ over $\bK_n$, for any $i < n$, $s \re \bK_i$ denotes the restriction of $s$ to parameters from $\bK_i$.
Recall that the notation ``tp'' denotes a complete quantifer-free 1-type.

\begin{defn}[The  Coding  Tree of $1$-Types, $\bS(\bK)$]\label{defn.treecodeK}
The {\em coding  tree of $1$-types}
$\bS(\bK)$
for an enumerated \Fraisse\ structure $\bK$
 is the set of all complete
  $1$-types over initial segments of $\bK$
along with a function $c:\om\ra \bS(\bK)$ such that
$c(n)$ is the
$1$-type
of $v_n$
over $\bK_n$.
The tree-ordering is simply inclusion.
\end{defn}

We shall usually simply write $\bS$, rather than $\bS(\bK)$.
Note that we make no requirement at this point on $\bK$;
an enumerated \Fraisse\ limit of  any \Fraisse\ class (with no reference to its amalgamation or Ramsey properties)
naturally induces a  coding tree of $1$-types as above.
We say that $c(n)$ {\em represents} or {\em codes}  the vertex  $v_n$.
Instead of writing $c(n)$, we shall usually write $c_n$ for the $n$-th coding node  in $\bS$.

We let  $\bS(n)$
denote
 the collection
 of all   $1$-types  $\type(v_i/\bK_n)$, where
 $i\ge n$.
 Note that each $c(n)$ is a node in $\bS(n)$.
The set  $\bS(0)$ consists of
 the
 $1$-types over the empty structure $\bK_0$.
For $s\in \bS(n)$, the immediate successors of $s$   are  exactly those $t\in \bS(n+1)$ such that $s\sse t$.
For each $n<\om$, the set  $\bS(n)$ is finite, since the language
$\mathcal{L}$  consists of finitely many finitary relation symbols.

We say that each  node $s\in\bS(n)$ has {\em length}  $n+1$, and denote the length of $s$ by $|s|$.
Thus, all nodes in $\bS$ have length at least one.
While it is slightly unconventional to consider the roots
of $\bS$ as having length one,  this approach lines up with the  natural correspondence between  nodes in $\bS$ and certain sequences of partial $1$-types
that  we define
in the next paragraph.
The reader wishing for a  tree starting with a node of length zero  may consider adding the
empty set  to $\bS$, as this will have no
effect
on the results in this paper.
A {\em level set} is a subset $X\sse\bS$ such that all nodes in $X$ have the same length.

Let  $n<\om$ and   $s\in\bS(n)$ be given.
We let  $s(0)$  denote the set of formulas in $s$ involving
no parameters;
$s(0)$ is the unique member of $\bS(0)$ such that $s(0)\sse s$.
For  $1\le i\le n$,
we let
 $s(i)$ denote  the set of those formulas in $s\re \bK_i$
in which $v_{i-1}$ appears; in other words, the formulas in $s \re \bK_i$ that are not in  $s \re \bK_{i-1}$.
In this manner,  each $s\in \bS$ determines a unique sequence
$\lgl s(i):i< |s|\rgl$, where $\{s(i):i< |s|\}$ forms a partition of $s$.
For $j< |s|$, $\bigcup_{i\le j}s(i)$  is the  node in $\bS(j)$
such that $\bigcup_{i\le  j}s(i)\sse s$.
For $\ell\le |s|$,
we shall usually write $s \re \ell$ to denote $\bigcup_{i<\ell}s(i)$.

Given $s,t\in \bS$,
we define the {\em meet} of $s$ and $t$, denoted $s\wedge t$, to be
$s \re \bK_m$ for the maximum  $m\le \min(|s|,|t|)$ such that $s \re \bK_m=t \re \bK_m$.
It can be useful to think of  $s\in \bS$ as the sequence $\lgl s(0),\dots, s(|s|-1)\rgl$;
then $s\wedge t$ can be interpreted in the  usual way for trees of sequences.

It will be useful later  to have  specific notation for unary relations.
We will let $\Gamma$
denote $\bS(0)$,  the set of  complete
$1$-types
over the empty set
that are realized in $\bK$.
For $\gamma\in\Gamma$, we write
``$\gamma(v_n)$ holds in $\bK$'' when $\gamma$ is the 1-type of $v_n$ over the empty set; in practice,
it will be the unary relation symbols in $\gamma$ (if there are any) that will be of interest to us.

 \begin{rem}
 Our definition of $s(i)$
sets up   for the   definition of passing type
in Subsection \ref{subsec.3.2},
which  directly abstracts the notion of passing number used in  \cite{Sauer06} and
  \cite{Laflamme/Sauer/Vuksanovic06}, and in subsequent papers building on their ideas.
\end{rem}

\begin{rem}
In the case where all relation symbols in the language $\mathcal{L}$ have arity at most two,
  the  coding  tree  of $1$-types $\bS$  has bounded branching.
  If $\mathcal{L}$ has any relation symbol of arity three or greater, then  $\bS$ may
  have branching which increases as the levels increase.
If such a \Fraisse\ class satisfies \EEAP,  sometimes more work  still  must be  done  in order to guarantee
that its \Fraisse\ limit has
\EEAP$^+$.
\end{rem}


We now provide graphics for   coding trees of $1$-types which are prototypical for  \Fraisse\ classes which 
have \EEAP.
That their \Fraisse\ limits satisfy SDAP$^+$ will be proved in Section 5 of  Part II.
We start with the
rational linear order,
since its coding tree of $1$-types is the simplest, and  also because  the rationals were the first \Fraisse\ structure for which big Ramsey degrees were characterized (Devlin, \cite{DevlinThesis}).

\begin{example}[The coding tree of $1$-types  $\bS(\bQ)$]\label{ex.Q}
Figure \ref{fig.Qtree}
shows the coding tree of $1$-types for  $(\bQ,<)$, the rationals as a linear order.
This is the \Fraisse\ limit of $\mathcal{LO}$, the class of finite linear orders.
We assume that the universe  of $\bQ$  is linearly ordered in order-type $\om$ as $\lgl v_n:n<\om\rgl$.
For each $n$, the coding node $c_n$
is the $1$-type of
vertex $v_n$  over the initial segment $\{v_i:i<n\}$ of $\bQ$.
(Recall that $x$ is the variable in all  of our $1$-types.)
Thus, the coding node $c_0$ is the empty $1$-type, and
 $c_1$   is the  $1$-type
$\{v_0<x\}$.
Thus,  the coding nodes $\{c_0,c_1\}$ represent the linear order $v_0<v_1$.
Likewise, the coding node $c_2$ is  the $1$-type
$\{x<v_0,x<v_1\}$
 over the linear order $v_0<v_1$.
Hence, $c_2$ represents the vertex $v_2$ satisfying
$v_2<v_0<v_1$.
The coding node $c_3$ is the $1$-type $\{v_0<x,x<v_1,v_2<x\}$, so  $c_3$ represents the vertex $v_3$ satisfying
$v_2<v_0<v_3<v_1$.
Below the  tree, we  picture the linear order on the vertices $v_0,\dots,v_5$ induced by the coding nodes.
As the tree grows in height, the linear order represented by the coding nodes  grows into  the countable dense linear order with no endpoints.

Notice that only the coding nodes branch.
This is because
of the rigidity of the rationals:
Given  a non-coding node $s$ on the same level as a coding node $c_{n}$ (say $n\ge 1$),
$s$ is a  $1$-type which is satisfied by any vertex
which lies in some interval determined by the vertices $\{v_i:i< n\}$, and  $v_n$ is not in that interval.
Thus, the order between $v_n$ and any vertex satisfying $s$ is
predetermined,
so $s$ does not split.
Said another way,
letting $m$ denote the length of the meet of  $c_n$ and $s$,
$c_n$  and $s$ must disagree on the formula $x<v_{m}$;
hence,  $x<v_m$ is in $c_n$   if and only if  $v_m<x$ is in
  $s$.
In the case that the formula $x<v_m$ is in $c_n$,
then
it follows that  $v_n<v_m$.
On the other hand, any realization $v_i$ of the $1$-type $s$ must satisfy $v_m<v_i$.
Hence  every realization of $s$ by some vertex  $v_i$ must satisfy $v_n<v_i$.
Thus, there is only one immediate successor of $s$ in the tree of $1$-types.
The tree of $1$-types for $\bQ$
 eradicates the extraneous structure
which appears in the more traditional approach of using the full binary branching tree and Milliken's Theorem  to approach big Ramsey degrees of the rationals.
\end{example}


\begin{figure}
\begin{tikzpicture}[grow'=up,scale=.6]
\tikzstyle{level 1}=[sibling distance=4in]
\tikzstyle{level 2}=[sibling distance=2in]
\tikzstyle{level 3}=[sibling distance=1in]
\tikzstyle{level 4}=[sibling distance=0.5in]
\tikzstyle{level 5}=[sibling distance=0.2in]
\tikzstyle{level 6}=[sibling distance=0.1in]
\tikzstyle{level 7}=[sibling distance=0.07in]
\node [label=$c_0$] {} coordinate (t9)
child{ coordinate (t0) edge from parent[color=black,thick]
child{ coordinate (t00) edge from parent[color=black,thick]
child{ coordinate (t000) edge from parent[color=black,thick]
child{coordinate (t0000) edge from parent[color=black,thick]
child{coordinate (t00000) edge from parent[color=black,thick]
child{coordinate (t000000) edge from parent[color=black,thick]}
}
}
}
child{ coordinate (t001) edge from parent[color=black,thick]
child{ coordinate (t0010) edge from parent[color=black,thick]
child{coordinate (t00100) edge from parent[color=black,thick]
child{coordinate (t001000) edge from parent[color=black,thick]}
child{coordinate (t001001) edge from parent[color=black,thick]}
}
}
}
}
}
child{ coordinate (t1) edge from parent[color=black,thick]
child{ coordinate (t10) edge from parent[color=black,thick]
child{ coordinate (t100)
child{coordinate (t1000)
child{coordinate (t10000)
child{coordinate (t100000)}
}
}
child{coordinate (t1001)
child{coordinate (t10010)
child{coordinate (t100100)}
}
}
}
}
child{ coordinate (t11) [label={\small 1/16}]
child{coordinate (t110)
child{coordinate (t1100)
child{coordinate (t11000)
child{coordinate (t110000)}
}
child{coordinate (t11001)
child{coordinate (t110010)}
}
}
}
}
}
;

\node[circle, fill=blue,inner sep=0pt, minimum size=5pt] at (t9) {};
\node[circle, fill=blue,inner sep=0pt, minimum size=5pt,label=0:$c_2$,label=250:$\scriptstyle{x<v_1}$] at (t00) {};
\node[circle, fill=blue,inner sep=0pt, minimum size=5pt,label=0:$c_5$,label=280:${\scriptstyle x<v_4}$] at (t00100) {};
\node[circle, fill=blue,inner sep=0pt, minimum size=5pt, label=$c_1$,label=250:$\scriptstyle{v_0<x}$] at (t1) {};
\node[circle, fill=blue,inner sep=0pt, minimum size=5pt, label = 0:$c_3$,label=240:$\scriptstyle{v_2<x}$] at (t100) {};
\node[circle, fill=blue,inner sep=0pt, minimum size=5pt,label = 0:$c_4$,label=300:$\scriptstyle{v_3<x}$] at (t1100) {};
\node[label=280:$\scriptstyle{x<v_0}$] at (t0) {};
\node[label=270:$\scriptstyle{v_1 < x}$] at (t11) {};
\node[label=270:$\scriptstyle{x<v_1}$] at (t10) {};
\node[label=300:$\scriptstyle{v_2 < x}$] at (t110) {};
\node[label=240:${\scriptstyle x<v_3}$] at (t1000) {};
\node[label=280:${\scriptstyle v_3 < x}$] at (t1001) {};
\node[label=240:${\scriptstyle x<v_4}$] at (t10000) {};
\node[label=300:${\scriptstyle x < v_4}$] at (t10010) {};
\node[label=240:${\scriptstyle v_5<x}$] at (t100000) {};
\node[label=300:${\scriptstyle v_5<x}$] at (t100100) {};
\node[label=260:${\scriptstyle x<v_4}$] at (t11000) {};
\node[label=280:${\scriptstyle v_4<x}$] at (t11001) {};
\node[label=260:${\scriptstyle v_5<x}$] at (t110000) {};
\node[label=280:${\scriptstyle v_5<x}$] at (t110010) {};
\node[label=260:${\scriptstyle x<v_2}$] at (t000) {};
\node[label=280:${\scriptstyle v_2<x}$] at (t001) {};
\node[label=260:${\scriptstyle x<v_3}$] at (t0000) {};
\node[label=280:${\scriptstyle x<v_3}$] at (t0010) {};
\node[label=260:${\scriptstyle x<v_4}$] at (t00000) {};
\node[label=260:${\scriptstyle x<v_5}$] at (t000000) {};
\node[label=260:${\scriptstyle x<v_5}$] at (t001000) {};
\node[label=280:${\scriptstyle v_5<x}$] at (t001001) {};
\node[label=280:${\scriptstyle v_5<x}$] at (t001001) {};

\node[circle,inner sep=0pt, minimum size=5pt,label=90:$\vdots$] at (t000000) {};
\node[circle,inner sep=0pt, minimum size=5pt,label=90:$\vdots$] at (t001000) {};
\node[circle,inner sep=0pt, minimum size=5pt,label=90:$\vdots$] at (t001001) {};
\node[circle,inner sep=0pt, minimum size=5pt,label=90:$\vdots$] at (t100000) {};
\node[circle,inner sep=0pt, minimum size=5pt,label=90:$\vdots$] at (t100100) {};
\node[circle,inner sep=0pt, minimum size=5pt,label=90:$\vdots$] at (t100100) {};
\node[circle,inner sep=0pt, minimum size=5pt,label=90:$\vdots$] at (t110000) {};
\node[circle,inner sep=0pt, minimum size=5pt,label=90:$\vdots$] at (t110010) {};

\node[circle, fill=white,draw,inner sep=0pt, minimum size=4pt] at (t0) {};
\node[circle, fill=white,draw,inner sep=0pt, minimum size=4pt] at (t000) {};
\node[circle, fill=white,draw,inner sep=0pt, minimum size=4pt] at (t0000) {};
\node[circle, fill=white,draw,inner sep=0pt, minimum size=4pt] at (t00000) {};
\node[circle, fill=white,draw,inner sep=0pt, minimum size=4pt] at (t000000) {};
\node[circle, fill=white,draw,inner sep=0pt, minimum size=4pt] at (t001) {};
\node[circle, fill=white,draw,inner sep=0pt, minimum size=4pt] at (t0010) {};
\node[circle, fill=white,draw,inner sep=0pt, minimum size=4pt] at (t001000) {};
\node[circle, fill=white,draw,inner sep=0pt, minimum size=4pt] at (t001001) {};
\node[circle, fill=white,draw,inner sep=0pt, minimum size=4pt] at (t10) {};
\node[circle, fill=white,draw,inner sep=0pt, minimum size=4pt] at (t1000) {};
\node[circle, fill=white,draw,inner sep=0pt, minimum size=4pt] at (t10000) {};
\node[circle, fill=white,draw,inner sep=0pt, minimum size=4pt] at (t100000) {};
\node[circle, fill=white,draw,inner sep=0pt, minimum size=4pt] at (t1001) {};
\node[circle, fill=white,draw,inner sep=0pt, minimum size=4pt] at (t10010) {};
\node[circle, fill=white,draw,inner sep=0pt, minimum size=4pt] at (t100100) {};
\node[circle, fill=white,draw,inner sep=0pt, minimum size=4pt] at (t11) {};
\node[circle, fill=white,draw,inner sep=0pt, minimum size=4pt] at (t110) {};
\node[circle, fill=white,draw,inner sep=0pt, minimum size=4pt] at (t11000) {};
\node[circle, fill=white,draw,inner sep=0pt, minimum size=4pt] at (t110000) {};
\node[circle, fill=white,draw,inner sep=0pt, minimum size=4pt] at (t11001) {};
\node[circle, fill=white,draw,inner sep=0pt, minimum size=4pt] at (t110010) {};

\node[circle, fill=blue,inner sep=0pt, minimum size=5pt, label=$v_2$,below=3cm of t00] (v2) {};
\node[circle, fill=blue,inner sep=0pt, minimum size=5pt,label=$v_5$,below =3.9cm of t001] (v5) {};
\node[circle, fill=blue,inner sep=0pt, minimum size=5pt,label=$v_0$,below =1.25cm of t9] (v0) {};
\node[circle, fill=blue,inner sep=0pt, minimum size=5pt,label=$v_3$,below =3.95cm of t100] (v3) {};
\node[circle, fill=blue,inner sep=0pt, minimum size=5pt,label=$v_1$,below =2.1cm of t1] (v1) {};
\node[circle, fill=blue,inner sep=0pt, minimum size=5pt,label=$v_4$,below =3cm of t11] (v4) {};
\end{tikzpicture}
\caption{Coding tree of $1$-types for $(\bQ,<)$ and the  linear order represented by its coding nodes.}\label{fig.Qtree}
\end{figure}


\begin{example}[The coding tree of $1$-types $\bS(\bQ_2)$]\label{ex.ctbQ_2}
Next, we consider coding trees of $1$-types for linear orders with equivalence relations with finitely many equivalence classes, each of which is dense in the linear order.
Figure \ref{fig.Q2tree} provides a graphic for   the coding tree of $1$-types for the structure
$\bQ_2$,  the rationals with an equivalence relation with two equivalence classes which are each dense in  the linear order.
 We point out that
$c_0$ is the $1$-type $\{U_1(x)\}$,
$c_1$ is the $1$-type $\{U_0(x), x<v_0\}$,
$c_2$ is the $1$-type $\{U_0(x), v_0<x, v_1<x\}$, etc.

Note  that $\bS(\bQ_2)$ looks like two identical disjoint copies of a coding tree for $\bQ$.
This is because each of the two unary relations, representing the two equivalence classes, appears densely in the linear order.
The ordered structure $\bQ_2$ appears below the two trees as the vertices $v_0,v_1,\dots$.
Unlike Figure \ref{fig.Qtree} for $\bQ$, the  vertices in $\bQ_2$ do not line up below the coding nodes in the trees representing them, since $\bS(\bQ_2)$ has two roots.
However, if we modify our definition of coding tree of $1$-types to  have individual  coding nodes  $c_n$ represent the unary relations satisfied by $v_n$
(rather than $\bS$  having   $|\Gamma|$ many roots),
this has the effect of producing a one-rooted tree with
``$\gamma$-colored'' coding nodes appearing cofinally in the tree, for each $\gamma\in\Gamma$.
This approach then shows the linear order $\bQ_2$ lining up below the coding nodes,  recovers the characterization of the big Ramsey degrees in \cite{Laflamme/NVT/Sauer10}, and
 will aid us in proving \EEAP$^+$ for $\mathcal{P}_2$.
 (See Definition \ref{defn.ctU}
 for this variation of tree of $1$-types, which reproduces the approach in \cite{Laflamme/NVT/Sauer10}.)

Similarly, for any $n\ge 2$,  $\bS(\bQ_n)$ will have  $n$ roots, and  above each root, the $n$ trees will  be copies of each other.
\end{example}

\begin{figure}
\begin{center}
\begin{minipage}{.2\textwidth}
\hspace*{-4cm}
\begin{tikzpicture}[grow'=up,scale=.3]
\tikzstyle{level 1}=[sibling distance=4in]
\tikzstyle{level 2}=[sibling distance=2in]
\tikzstyle{level 3}=[sibling distance=1in]
\tikzstyle{level 4}=[sibling distance=0.5in]
\tikzstyle{level 5}=[sibling distance=0.2in]
\tikzstyle{level 6}=[sibling distance=0.1in]
\tikzstyle{level 7}=[sibling distance=0.07in]

\node {} coordinate (t9)
child{ coordinate (t0) edge from parent[color=black,thick]
child{coordinate (t00) edge from parent[color=black,thick]
child{coordinate (t000) edge from parent[color=black,thick]
child{coordinate (t0000) edge from parent[color=black,thick]
child{coordinate (t00000) edge from parent[color=black,thick]}
}
child{coordinate (t0001) edge from parent[color=black,thick]
child{coordinate (t00010) edge from parent[color=black,thick]}
}
}
}
child{coordinate (t01) edge from parent[color=black,thick]
child{coordinate (t010) edge from parent[color=black,thick]
child{coordinate (t0100) edge from parent[color=black,thick]
child{coordinate (t01000) edge from parent[color=black,thick]}
}
}
}
}
child{ coordinate (t1) edge from parent[color=black,thick]
child{coordinate (t10) edge from parent[color=black,thick]
child{coordinate (t100) edge from parent[color=black,thick]
child{coordinate (t1000) edge from parent[color=black,thick]
child{coordinate (t10000) edge from parent[color=black,thick]}
child{coordinate (t10001) edge from parent[color=black,thick]}
}
}
child{coordinate (t101) edge from parent[color=black,thick]
child{coordinate (t1010) edge from parent[color=black,thick]
child{coordinate (t10100) edge from parent[color=black,thick]}
}
}
}
};
%
\node[label=90:$\scriptstyle{c_1}$,label=10:\hspace*{3mm}$\scriptstyle{v_1<x}$,label=170:$\scriptstyle{x<v_1}$\hspace*{3mm},circle, fill=red,inner sep=0pt, minimum size=5pt] at (t0) {};
\node[label=90:$\scriptstyle{c_2}$,label=150:$\scriptstyle{x<v_2}\hspace{2mm}$,label=20:$\hspace{2mm}\scriptstyle{v_2<x}$,circle, fill=red,inner sep=0pt, minimum size=5pt] at (t10) {};
\node[label=0:$\scriptstyle{c_5}$,circle, fill=red,inner sep=0pt, minimum size=5pt] at (t00010) {};

\node[circle,inner sep=0pt, minimum size=5pt,label=90:$\vdots$] at (t00000) {};
\node[circle,inner sep=0pt, minimum size=5pt,label=90:$\vdots$] at (t00010) {};
\node[circle,inner sep=0pt, minimum size=5pt,label=90:$\vdots$] at (t01000) {};
\node[circle,inner sep=0pt, minimum size=5pt,label=90:$\vdots$] at (t10000) {};
\node[circle,inner sep=0pt, minimum size=5pt,label=90:$\vdots$] at (t10001) {};
\node[circle,inner sep=0pt, minimum size=5pt,label=90:$\vdots$] at (t10100) {};

\node[label=180:$\scriptstyle{x<v_0}\hspace{6mm}$,label=270:$\scriptstyle{U_0(x)}$,label=0:$\hspace{6mm}\scriptstyle{v_0<x}$,circle, fill=white,draw,inner sep=0pt, minimum size=4pt] at (t9) {};
\node[label=90:$\scriptstyle{x<v_2}\hspace{14mm}$,circle, fill=white,draw,inner sep=0pt, minimum size=4pt] at (t00) {};
\node[circle, fill=white,draw,inner sep=0pt, minimum size=4pt] at (t000) {};
\node[circle, fill=white,draw,inner sep=0pt, minimum size=4pt] at (t0000) {};
\node[circle, fill=white,draw,inner sep=0pt, minimum size=4pt] at (t00000) {};
\node[circle, fill=white,draw,inner sep=0pt, minimum size=4pt] at (t0001) {};
\node[label=90:$\hspace{11mm}\scriptstyle{x<v_2}$,circle, fill=white,draw,inner sep=0pt, minimum size=4pt] at (t01) {};
\node[circle, fill=white,draw,inner sep=0pt, minimum size=4pt] at (t010) {};
\node[circle, fill=white,draw,inner sep=0pt, minimum size=4pt] at (t0100) {};
\node[circle, fill=white,draw,inner sep=0pt, minimum size=4pt] at (t01000) {};
\node[label=20:$\scriptstyle{v_1<x}$,circle, fill=white,draw,inner sep=0pt, minimum size=4pt] at (t1) {};
\node[circle, fill=white,draw,inner sep=0pt, minimum size=4pt] at (t100) {};
\node[circle, fill=white,draw,inner sep=0pt, minimum size=4pt] at (t1000) {};
\node[circle, fill=white,draw,inner sep=0pt, minimum size=4pt] at (t10000) {};
\node[circle, fill=white,draw,inner sep=0pt, minimum size=4pt] at (t10001) {};
\node[circle, fill=white,draw,inner sep=0pt, minimum size=4pt] at (t101) {};
\node[circle, fill=white,draw,inner sep=0pt, minimum size=4pt] at (t1010) {};
\node[circle, fill=white,draw,inner sep=0pt, minimum size=4pt] at (t10100) {};
\end{tikzpicture}
\end{minipage}
\begin{minipage}{0.2\textwidth}
\hspace*{-0.5cm}
\begin{tikzpicture}[grow'=up,scale=.3]
\tikzstyle{level 1}=[sibling distance=4in]
\tikzstyle{level 2}=[sibling distance=2in]
\tikzstyle{level 3}=[sibling distance=1in]
\tikzstyle{level 4}=[sibling distance=0.5in]
\tikzstyle{level 5}=[sibling distance=0.2in]
\tikzstyle{level 6}=[sibling distance=0.1in]
\tikzstyle{level 7}=[sibling distance=0.07in]

\node {} coordinate (u9)
child{coordinate (u0) edge from parent[color=black,thick]
child{coordinate (u00) edge from parent[color=black,thick]
child{coordinate (u000) edge from parent[color=black,thick]
child{coordinate (u0000) edge from parent[color=black,thick]
child{coordinate (u00000) edge from parent[color=black,thick]}
}
child{coordinate (u0001) edge from parent[color=black,thick]
child{coordinate (u00010) edge from parent[color=black,thick]}
}
}
}
child{coordinate (u01) edge from parent[color=black,thick]
child{coordinate (u010) edge from parent[color=black,thick]
child{coordinate (u0100) edge from parent[color=black,thick]
child{coordinate (u01000) edge from parent[color=black,thick]}
}
}
}
}
child{coordinate (u1) edge from parent[color=black,thick]
child{coordinate (u10) edge from parent[color=black,thick]
child{coordinate (u100) edge from parent[color=black,thick]
child{coordinate (u1000) edge from parent[color=black,thick]
child{coordinate (u10000) edge from parent[color=black,thick]}
child{coordinate (u10001) edge from parent[color=black,thick]}
}
}
child{coordinate (u101) edge from parent[color=black,thick]
child{coordinate (u1010) edge from parent[color=black,thick]
child{coordinate (u10100) edge from parent[color=black,thick]}
}
}
}
};

\node[label=90:$\scriptstyle{c_0}$, label=180:$\scriptstyle{x<v_0}\hspace{6mm}$,label=0:$\hspace{6mm}\scriptstyle{v_0<x}$,label=270:$\scriptstyle{U_1(x)}$,circle, fill=blue,inner sep=0pt, minimum size=5pt] at (u9) {};
\node[label=0:$\scriptstyle{c_3}$,circle, fill=blue,inner sep=0pt, minimum size=5pt] at (u000) {};
\node[label=180:$\scriptstyle{c_4}$,circle, fill=blue,inner sep=0pt, minimum size=5pt] at (u1000) {};

\node[circle,inner sep=0pt, minimum size=5pt,label=90:$\vdots$] at (u00000) {};
\node[circle,inner sep=0pt, minimum size=5pt,label=90:$\vdots$] at (u00010) {};
\node[circle,inner sep=0pt, minimum size=5pt,label=90:$\vdots$] at (u01000) {};
\node[circle,inner sep=0pt, minimum size=5pt,label=90:$\vdots$] at (u10000) {};
\node[circle,inner sep=0pt, minimum size=5pt,label=90:$\vdots$] at (u10001) {};
\node[circle,inner sep=0pt, minimum size=5pt,label=90:$\vdots$] at (u10100) {};

\node[label=150:$\scriptstyle{x<v_1}\hspace{5mm}$,label=30:$\hspace{5mm}\scriptstyle{v_1<x}$,circle, fill=white,draw,inner sep=0pt, minimum size=4pt] at (u0) {};
\node[label=120:$\scriptstyle{x<v_2}$,circle, fill=white,draw,inner sep=0pt, minimum size=4pt] at (u00) {};
\node[circle, fill=white,draw,inner sep=0pt, minimum size=4pt] at (u0000) {};
\node[circle, fill=white,draw,inner sep=0pt, minimum size=4pt] at (u00000) {};
\node[circle, fill=white,draw,inner sep=0pt, minimum size=4pt] at (u0001) {};
\node[circle, fill=white,draw,inner sep=0pt, minimum size=4pt] at (u00010) {};
\node[label=30:$\scriptstyle{x<v_2}$,circle, fill=white,draw,inner sep=0pt, minimum size=4pt] at (u01) {};
\node[circle, fill=white,draw,inner sep=0pt, minimum size=4pt] at (u010) {};
\node[circle, fill=white,draw,inner sep=0pt, minimum size=4pt] at (u0100) {};
\node[circle, fill=white,draw,inner sep=0pt, minimum size=4pt] at (u01000) {};
\node[label=30:$\scriptstyle{v_1<x}$,circle, fill=white,draw,inner sep=0pt, minimum size=4pt] at (u1) {};
\node[label=150:$\scriptstyle{x<v_2}\hspace{2mm}$,label=30:$\hspace{2mm}\scriptstyle{v_2<x}$,circle, fill=white,draw,inner sep=0pt, minimum size=4pt] at (u10) {};
\node[circle, fill=white,draw,inner sep=0pt, minimum size=4pt] at (u100) {};
\node[circle, fill=white,draw,inner sep=0pt, minimum size=4pt] at (u10000) {};
\node[circle, fill=white,draw,inner sep=0pt, minimum size=4pt] at (u10001) {};
\node[circle, fill=white,draw,inner sep=0pt, minimum size=4pt] at (u101) {};
\node[circle, fill=white,draw,inner sep=0pt, minimum size=4pt] at (u1010) {};
\node[circle, fill=white,draw,inner sep=0pt, minimum size=4pt] at (u10100) {};
\end{tikzpicture}
\end{minipage}

\end{center}
\begin{tikzpicture}
\node[circle, fill=blue,inner sep=0pt, minimum size=5pt, label=$v_3$] (v3) {};
\node[circle, fill=red,inner sep=0pt, minimum size=5pt,label=$v_5$,right=1.6cm of v3] (v5) {};
\node[circle, fill=red,inner sep=0pt, minimum size=5pt,label=$v_1$,right=1.6cm of v5] (v1) {};
\node[circle, fill=blue,inner sep=0pt, minimum size=5pt, label=$v_0$,right=1.6cm of v1] (v0) {};
\node[circle, fill=blue,inner sep=0pt, minimum size=5pt,label=$v_4$,right=1.6cm of v0] (v4) {};
\node[circle, fill=red,inner sep=0pt, minimum size=5pt,label=$v_2$,right=1.6cm of v4] (v2) {};
\end{tikzpicture}
\caption{Coding tree of $1$-types for $(\mathbb{Q}_2,<)$ and the linear order represented by its coding nodes.}
\label{fig.Q2tree}
\end{figure}
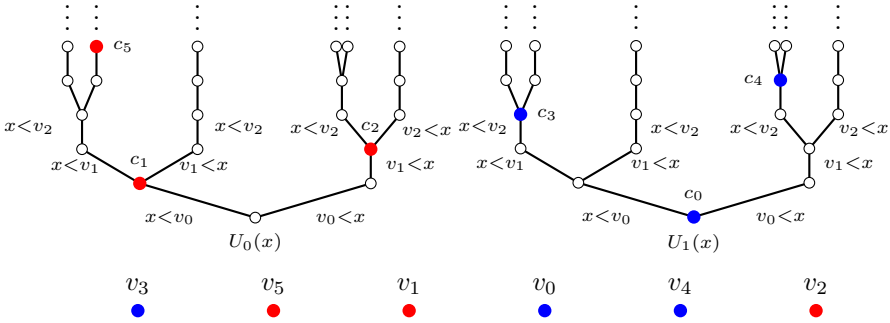


\begin{example}[The coding tree of $1$-types  $\bS(\bQ_{\bQ})$]\label{ex.Q_Q}
Next, we present the tree of $1$-types  for the  \Fraisse\ structure $\bQ_{\bQ}$.
Recall that  this  is the \Fraisse\ limit of the class $\mathcal{CO}$ in the language $\mathcal{L}=\{<,E\}$, where $<$ is a linear order and $E$ is a convexly ordered
equivalence relation, meaning that all equivalence classes are intervals.

Figure \ref{fig.QQtree} shows  the first six levels of a coding tree of $1$-types, $\bS(\bQ_{\bQ})$.
The formulas which are in the $1$-types can be read from the graphic.  For instance,
$c_0$ is the empty type.
$c_1$ is the $1$-type $\{v_0<x, E(x,v_0)\}$,
so since the vertex $v_1$ satisfies this $1$-type, we have
$v_0<v_1$ and $E(v_0,v_1)$ holding.
Similarly,
$c_2$ is the $1$-type
$\{ x<v_0, \neg E(x,v_0), x<v_1, \neg E(x,v_1)\}$,
so $v_2$ satisfies  $v_2<v_0$, and $v_2$ is not equivalent to either of $v_0$ or $v_1$.
 $c_3$ is the $1$-type
$\{v_0<x,E(x,v_0), v_1<x, E(x,v_1), v_2<x,\neg E(x,v_2)\}$, and hence, we see that $v_1<v_3$ and $v_3$ is equivalent to $v_1$ and hence also to $v_0$.
Note that only coding nodes branch.
Moreover,  $c_n$ has splitting  degree two if $c_n$ represents a vertex $v_n$ which is equivalent to $v_i$ for some $i<n$;
 otherwise
$c_n$ has  splitting  degree four.
 For each non-coding node $s$ on the level of a coding node $c_n$, there is only one possible $1$-type extending $s$ over the initial structure on the first $n+1$ vertices of  $\bQ_{\bQ}$.
We will show in Theorem 5.10 in Part II
that  $\bQ_{\bQ}$  satisfies   \EEAP$^+$.

In Figure \ref{fig.QQtree},
below the tree  $\bS(\bQ_{\bQ})$ is the linear order  on the vertices  $v_0,\dots, v_5$ represented  by the coding nodes $c_0,\dots,c_5$;  the lines between the vertices represent that they are in the same equivalence class.  Thus, $v_0,v_1,v_3$ are all in the same equivalence class, $v_2, v_4$ are in a different equivalence class, and $v_5$ is in yet another equivalence class.
\end{example}


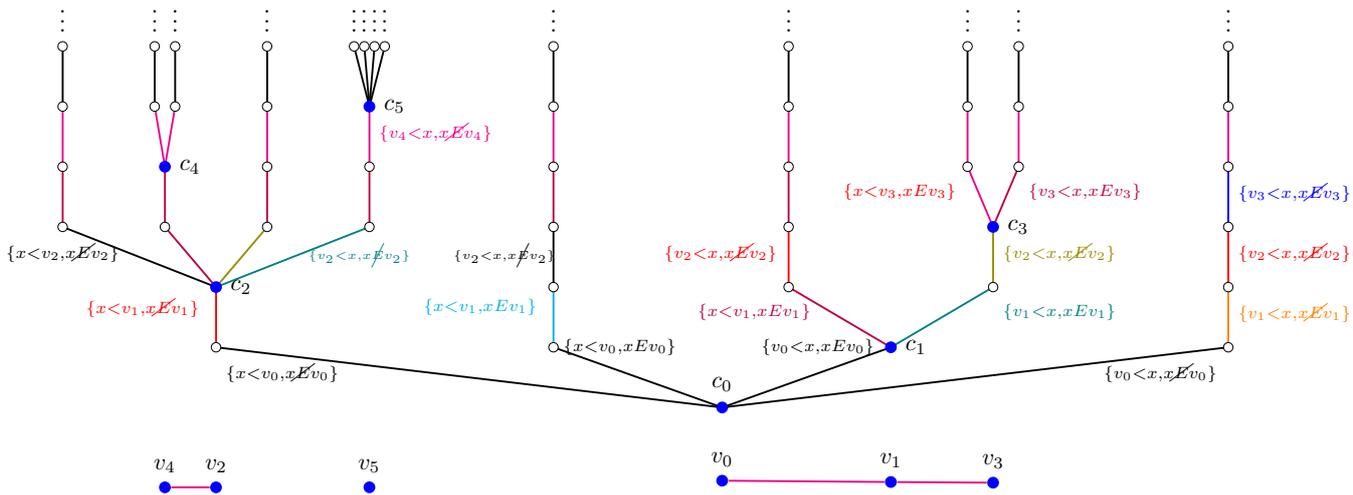
\begin{sidewaysfigure}
\vspace{13cm}
\resizebox{18cm}{!}{
\begin{tikzpicture}[grow'=up,scale=.60]
\tikzstyle{level 1}=[sibling distance=3.3in]
\tikzstyle{level 2}=[sibling distance=2in]
\tikzstyle{level 3}=[sibling distance=1in]
\tikzstyle{level 4}=[sibling distance=0.5in]
\tikzstyle{level 5}=[sibling distance=0.2in]
\tikzstyle{level 6}=[sibling distance=0.1in]
\tikzstyle{level 7}=[sibling distance=0.07in]
\node [label=$c_0$] {} coordinate (t9)
child{ coordinate (t0) edge from parent[color=black,thick]
child{ coordinate (t00) edge from parent[color=red,thick]
child{coordinate (t000) edge from parent[color=black,thick]
child{coordinate (t0000) edge from parent[color=purple,thick]
child{coordinate (t00000) edge from parent[color=magenta,thick]
child{coordinate (t000000) edge from parent[color=black,thick]}
}
}
}
child{coordinate (t001) edge from parent[color=purple,thick]
child{coordinate (t0010) edge from parent[color=purple,thick]
child{coordinate (t00100) edge from parent[color=magenta,thick]
child{coordinate (t001000) edge from parent[color=black,thick]}
}
child{coordinate (t00101) edge from parent[color=magenta,thick]
child{coordinate (t001010) edge from parent[color=black,thick]}
}
}
}
child{coordinate (t002) edge from parent[color=olive,thick]
child{coordinate (t0020) edge from parent[color=purple,thick]
child{coordinate (t00200)
edge from parent[color=magenta,thick]
child{coordinate (t002000)
edge from parent[color=black,thick]}
}
}
}
child{coordinate (t003) edge from parent[color=teal,thick]
child{coordinate (t0030) edge from parent[color=purple,thick]
child{coordinate (t00300) edge from parent[color=magenta,thick]
child{coordinate (t003000) edge from parent[color=black,thick]}
child{coordinate (t003001) edge from parent[color=black,thick]}
child{coordinate (t003002) edge from parent[color=black,thick]}
child{coordinate (t003003) edge from parent[color=black,thick]}
}
}
}
}
}
child{ coordinate (t1) edge from parent[color=black,thick]
child{ coordinate (t10) edge from parent[color=cyan,thick]
child{coordinate (t100) edge from parent[color=black,thick]
child{coordinate (t1000) edge from parent[color=purple,thick]
child{coordinate (t10000) edge from parent[color=magenta,thick]
child{coordinate (t100000) edge from parent[color=black,thick]}
}
}
}
}
}
child{ coordinate (t2) edge from parent[color=black,thick]
child{ coordinate (t20) edge from parent[color=purple,thick]
child{coordinate (t200) edge from parent[color=red,thick]
child{coordinate (t2000) edge from parent[color=purple,thick]
child{coordinate (t20000) edge from parent[color=magenta,thick]
child{coordinate (t200000) edge from parent[color=black,thick]}
}
}
}
}
child{coordinate (t21) edge from parent[color=teal,thick]
child{coordinate (t210) edge from parent[color=olive,thick]
child{coordinate (t2100) edge from parent[color=magenta,thick]
child{coordinate (t21000) edge from parent[color=magenta,thick]
child{coordinate (t210000) edge from parent[color=black,thick]}
}
}
child{coordinate (t2101) edge from parent[color=purple,thick]
child{coordinate (t21010) edge from parent[color=magenta,thick]
child{coordinate (t210100) edge from parent[color=black,thick]}
}
}
}
}
}
child{ coordinate (t3) edge from parent[color=black,thick]
child{ coordinate (t30) edge from parent[color=orange,thick]
child{coordinate (t300) edge from parent[color=red,thick]
child{coordinate (t3000) edge from parent[color=blue,thick]
child{coordinate (t30000) edge from parent[color=magenta,thick]
child{coordinate (t300000) edge from parent[color=black,thick]}
}
}
}
}
}
;

\node[circle, fill=blue,inner sep=0pt, minimum size=5pt] at (t9) {};
\node[label=280:$\scriptstyle{\{x<v_0,x \cancel{E} v_0\}}$] at (t0) {};
\node[circle, fill=blue,inner sep=0pt, minimum size=5pt,label=0:$c_1$] at (t2) {};
\node[label=250:$\scriptstyle{\{v_0<x,x \cancel{E} v_0\}}$] at (t3) {};
\node[circle, fill=blue,inner sep=0pt, minimum size=5pt,label=0:$c_2$] at (t00) {};
\node[circle, fill=blue,inner sep=0pt, minimum size=5pt,label=0:$c_3$] at (t210) {};
\node[circle, fill=blue,inner sep=0pt, minimum size=5pt,label=0:$c_4$] at (t0010) {};
\node[circle, fill=blue,inner sep=0pt, minimum size=5pt,label=0:$c_5$] at (t00300) {};
\node[label=0:$\hspace{-1mm}\scriptstyle{\{x<v_0, x E v_0\}}$] at (t1) {};
\node[label=180:$\hspace{1mm}\scriptstyle{\{v_0<x, x E v_0\}}$] at (t2) {};
\node[label=200:${\color{red}\scriptstyle{\{x<v_1, x \cancel{E} v_1\}}}$] at (t00) {};
\node[label=200:${\color{cyan}\scriptstyle{\{x<v_1, x E v_1\}}}$] at (t10) {};
\node[label=250:${\color{purple}\scriptstyle{\{x<v_1, x E v_1\}}}\hspace{-5mm}$] at (t20) {};
\node[label=280:${\color{teal}\scriptstyle{\{v_1<x, x E v_1\}}}$] at (t21) {};
\node[label=280:${\color{orange}\scriptstyle{\{v_1<x, x \cancel{E} v_1\}}}$] at (t30) {};
\node[label=270:$\scriptstyle{\{x<v_2, x \cancel{E} v_2\}}$] at (t000) {};
\node[label=270:${\color{teal}\scriptscriptstyle{\{v_2<x, x \cancel{E} v_2\}}} \hspace{3mm}$] at (t003) {};
\node[label=250:$\scriptscriptstyle{\{v_2<x, x \cancel{E} v_2\}} \hspace{-2mm}$] at (t100) {};
\node[label=250:${\color{red}\scriptstyle{\{v_2<x, x \cancel{E} v_2\}}}$] at (t200) {};
\node[label=280:${\color{olive}\scriptstyle{\{v_2<x, x \cancel{E} v_2\}}}$] at (t210) {};
\node[label=280:${\color{red}\scriptstyle{\{v_2<x, x \cancel{E} v_2\}}}$] at (t300) {};
\node[label=280:${\color{blue}\scriptstyle{\{v_3<x, x \cancel{E} v_3\}}}$] at (t3000) {};
\node[label=280:${\color{purple}\scriptstyle{\{v_3<x, x E v_3\}}}$] at (t2101) {};
\node[label=250:${\color{red}\scriptstyle{\{x<v_3, x E v_3\}}}$] at (t2100) {};
\node[label=280:${\color{magenta}\scriptstyle{\{v_4<x, x \cancel{E} v_4\}}}$] at (t00300) {};

\node[circle,inner sep=0pt, minimum size=5pt,label=90:$\vdots$] at (t000000) {};
\node[circle,inner sep=0pt, minimum size=5pt,label=90:$\vdots$] at (t001000) {};
\node[circle,inner sep=0pt, minimum size=5pt,label=90:$\vdots$] at (t001010) {};
\node[circle,inner sep=0pt, minimum size=5pt,label=90:$\vdots$] at (t002000) {};
\node[circle,inner sep=0pt, minimum size=5pt,label=90:$\vdots$] at (t003000) {};
\node[circle,inner sep=0pt, minimum size=5pt,label=90:$\vdots$] at (t003001) {};
\node[circle,inner sep=0pt, minimum size=5pt,label=90:$\vdots$] at (t003002) {};
\node[circle,inner sep=0pt, minimum size=5pt,label=90:$\vdots$] at (t003003) {};
\node[circle,inner sep=0pt, minimum size=5pt,label=90:$\vdots$] at (t100000) {};
\node[circle,inner sep=0pt, minimum size=5pt,label=90:$\vdots$] at (t200000) {};
\node[circle,inner sep=0pt, minimum size=5pt,label=90:$\vdots$] at (t210000) {};
\node[circle,inner sep=0pt, minimum size=5pt,label=90:$\vdots$] at (t210100) {};
\node[circle,inner sep=0pt, minimum size=5pt,label=90:$\vdots$] at (t300000) {};

\node[circle, fill=blue,inner sep=0pt, minimum size=5pt, label=$v_4$,below=3.8cm of t001] (v4) {};
\node[circle, fill=blue,inner sep=0pt, minimum size=5pt, label=$v_2$,below=2.9cm of t00] (v2) {};
\node[circle, fill=blue,inner sep=0pt, minimum size=5pt, label=$v_5$,below=3.799cm of t003] (v5) {};
\node[circle, fill=blue,inner sep=0pt, minimum size=5pt, label=$v_0$,below=1.0cm of t9] (v0) {};
\node[circle, fill=blue,inner sep=0pt, minimum size=5pt, label=$v_1$,below=1.92cm of t2] (v1) {};
\node[circle, fill=blue,inner sep=0pt, minimum size=5pt, label=$v_3$,below=2.83cm of t21] (v3) {};

\node[circle, fill=white,draw,inner sep=0pt, minimum size=4pt] at (t0) {};
\node[circle, fill=white,draw,inner sep=0pt, minimum size=4pt] at (t000) {};
\node[circle, fill=white,draw,inner sep=0pt, minimum size=4pt] at (t0000) {};
\node[circle, fill=white,draw,inner sep=0pt, minimum size=4pt] at (t00000) {};
\node[circle, fill=white,draw,inner sep=0pt, minimum size=4pt] at (t000000) {};
\node[circle, fill=white,draw,inner sep=0pt, minimum size=4pt] at (t001) {};
\node[circle, fill=white,draw,inner sep=0pt, minimum size=4pt] at (t00100) {};
\node[circle, fill=white,draw,inner sep=0pt, minimum size=4pt] at (t001000) {};
\node[circle, fill=white,draw,inner sep=0pt, minimum size=4pt] at (t00101) {};
\node[circle, fill=white,draw,inner sep=0pt, minimum size=4pt] at (t001010) {};
\node[circle, fill=white,draw,inner sep=0pt, minimum size=4pt] at (t002) {};
\node[circle, fill=white,draw,inner sep=0pt, minimum size=4pt] at (t0020) {};
\node[circle, fill=white,draw,inner sep=0pt, minimum size=4pt] at (t00200) {};
\node[circle, fill=white,draw,inner sep=0pt, minimum size=4pt] at (t002000) {};
\node[circle, fill=white,draw,inner sep=0pt, minimum size=4pt] at (t003) {};
\node[circle, fill=white,draw,inner sep=0pt, minimum size=4pt] at (t0030) {};
\node[circle, fill=white,draw,inner sep=0pt, minimum size=4pt] at (t003000) {};
\node[circle, fill=white,draw,inner sep=0pt, minimum size=4pt] at (t003001) {};
\node[circle, fill=white,draw,inner sep=0pt, minimum size=4pt] at (t003002) {};
\node[circle, fill=white,draw,inner sep=0pt, minimum size=4pt] at (t003003) {};
\node[circle, fill=white,draw,inner sep=0pt, minimum size=4pt] at (t1) {};
\node[circle, fill=white,draw,inner sep=0pt, minimum size=4pt] at (t10) {};
\node[circle, fill=white,draw,inner sep=0pt, minimum size=4pt] at (t100) {};
\node[circle, fill=white,draw,inner sep=0pt, minimum size=4pt] at (t1000) {};
\node[circle, fill=white,draw,inner sep=0pt, minimum size=4pt] at (t10000) {};
\node[circle, fill=white,draw,inner sep=0pt, minimum size=4pt] at (t100000) {};
\node[circle, fill=white,draw,inner sep=0pt, minimum size=4pt] at (t20) {};
\node[circle, fill=white,draw,inner sep=0pt, minimum size=4pt] at (t200) {};
\node[circle, fill=white,draw,inner sep=0pt, minimum size=4pt] at (t2000) {};
\node[circle, fill=white,draw,inner sep=0pt, minimum size=4pt] at (t20000) {};
\node[circle, fill=white,draw,inner sep=0pt, minimum size=4pt] at (t200000) {};
\node[circle, fill=white,draw,inner sep=0pt, minimum size=4pt] at (t21) {};
\node[circle, fill=white,draw,inner sep=0pt, minimum size=4pt] at (t2100) {};
\node[circle, fill=white,draw,inner sep=0pt, minimum size=4pt] at (t21000) {};
\node[circle, fill=white,draw,inner sep=0pt, minimum size=4pt] at (t210000) {};
\node[circle, fill=white,draw,inner sep=0pt, minimum size=4pt] at (t2101) {};
\node[circle, fill=white,draw,inner sep=0pt, minimum size=4pt] at (t21010) {};
\node[circle, fill=white,draw,inner sep=0pt, minimum size=4pt] at (t210100) {};
\node[circle, fill=white,draw,inner sep=0pt, minimum size=4pt] at (t3) {};
\node[circle, fill=white,draw,inner sep=0pt, minimum size=4pt] at (t30) {};
\node[circle, fill=white,draw,inner sep=0pt, minimum size=4pt] at (t300) {};
\node[circle, fill=white,draw,inner sep=0pt, minimum size=4pt] at (t3000) {};
\node[circle, fill=white,draw,inner sep=0pt, minimum size=4pt] at (t30000) {};
\node[circle, fill=white,draw,inner sep=0pt, minimum size=4pt] at (t300000) {};

\draw[magenta,thick] (v4)--(v2);
\draw[magenta,thick] (v0)--(v1);
\draw[magenta,thick] (v1)--(v3);
\end{tikzpicture}}
\caption{Coding tree of $1$-types for $\bQ_{\bQ}$ and linear order with convex equivalence relations represented by its coding nodes.}
\label{fig.QQtree}
\end{sidewaysfigure}


Next, we present graphics for coding trees of $1$-types for some free amalgamation classes.
The tree of $1$-types for the Rado graph is simply a binary tree in which the coding nodes are dense and every node $s$  at the level of the $n$-th coding node splits into two immediate successors, representing the two possible extensions of $s$ to
the $1$-types
$s\cup\{E(x,v_n)\}$ and $s\cup\{\neg E(x,v_n)\}$.
This follows immediately from the Extension Property for the Rado graph.
As this is simple to visualize, and as a graphic has already appeared in \cite{DobrinenIfCoLog20}, we move on to bipartite graphs.

\begin{example}[The coding  tree of $1$-types
for the generic bipartite graph]\label{ex.BPG}
Figure \ref{fig.genbiptree}.\ presents a  coding tree of $1$-types for the  generic
bipartite graph.
The unary relations $U_0$ and $U_1$, which keep track of which partition each vertex is in, are represented by ``red'' and ``blue'', respectively.
We have chosen to enumerate  this  structure so that
 odd indexed vertices are in one of the partitions, and even indexed vertices  are in the other,
for purely aesthetic reasons.
The edge relation is represented as extension to the right, and non-edge is represented by extending left.
On the left is the bipartite graph being represented by the coding nodes in the two-rooted tree of $1$-types.
For instance, $c_0$ is the $1$-type $\{U_0(x)\}$, so $v_0$ is a vertex in the  collection of ``red''  vertices.
For another example,
$c_3$ is the $1$-type $\{U_1(x), E(x,v_0), \neg E(x,v_1),E(x,v_2)\}$.
Thus, $v_3$ is in the collection of ``blue''  vertices and has edges exactly with $v_0$ and $v_2$.
It is straightforward to check that the classes of $n$-partite graphs satisfy SFAP.
\end{example}


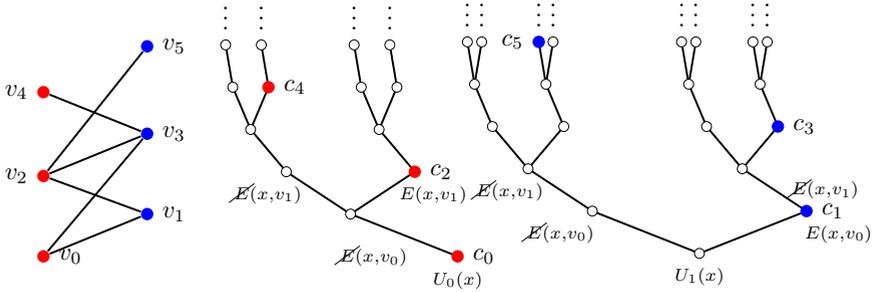
\begin{figure}
\begin{center}
\resizebox{4.7cm}{!}{
\begin{minipage}{.2\textwidth}
\hspace*{-3.5cm}
\begin{tikzpicture}[grow'=up,scale=.4]
\tikzstyle{level 1}=[sibling distance=3in]
\tikzstyle{level 2}=[sibling distance=1.8in]
\tikzstyle{level 3}=[sibling distance=1in]
\tikzstyle{level 4}=[sibling distance=0.5in]
\tikzstyle{level 5}=[sibling distance=0.2in]
\tikzstyle{level 6}=[sibling distance=0.1in]
\tikzstyle{level 7}=[sibling distance=0.07in]

\node {} coordinate (t9)
child{ coordinate (t0) edge from parent[color=black,thick]
child{coordinate (t00) edge from parent[color=black,thick]
child{coordinate (t000) edge from parent[color=black,thick]
child{coordinate (t0000) edge from parent[color=black,thick]
child{coordinate (t00000) edge from parent[color=black,thick]}
child{coordinate (t00001) edge from parent[color=white,thick]}
}
child{coordinate (t0001) edge from parent[color=black,thick]
child{coordinate (t00010) edge from parent[color=black,thick]}
child{coordinate (t00011) edge from parent[color=white,thick]}
}
}
child{coordinate (t001) edge from parent[color=white,thick]}
}
child{coordinate (t01) edge from parent[color=black,thick]
child{coordinate (t010) edge from parent[color=black,thick]
child{coordinate (t0100) edge from parent[color=black,thick]
child{coordinate (t01000) edge from parent[color=black,thick]}
child{coordinate (t01001) edge from parent[color=white,thick]}
}
child{coordinate (t0101) edge from parent[color=black,thick]
child{coordinate (t01010) edge from parent[color=black,thick]}
child{coordinate (t01011) edge from parent[color=white,thick]}
}
}
child{coordinate (t011) edge from parent[color=white,thick]}
}
}
child{coordinate (t1) edge from parent[color=white,thick]}
;

\node[label=180:$\scriptstyle{\cancel{E}(x,v_0)}\hspace{0.5cm}$,label=270:$\scriptstyle{U_0(x)}$,label=0:$c_0$,circle, fill=red,inner sep=0pt, minimum size=5pt] at (t9) {};
\node[label=0:$c_2$,circle, fill=red,inner sep=0pt, minimum size=5pt] at (t01) {};
\node[label=0:$c_4$,circle, fill=red,inner sep=0pt, minimum size=5pt] at (t0001) {};

\node[circle,inner sep=0pt, minimum size=5pt,label=90:$\vdots$] at (t00000) {};
\node[circle,inner sep=0pt, minimum size=5pt,label=90:$\vdots$] at (t00010) {};
\node[circle,inner sep=0pt, minimum size=5pt,label=90:$\vdots$] at (t01000) {};
\node[circle,inner sep=0pt, minimum size=5pt,label=90:$\vdots$] at (t01010) {};

\node[label=20:$\hspace{0.5cm}\scriptstyle{E(x,v_1)}$,label=160:$\scriptstyle{\cancel{E}(x,v_1)}\hspace{0.5cm}$,circle, fill=white,draw,inner sep=0pt, minimum size=4pt] at (t0) {};
\node[circle, fill=white,draw,inner sep=0pt, minimum size=4pt] at (t00) {};
\node[circle, fill=white,draw,inner sep=0pt, minimum size=4pt] at (t000) {};
\node[circle, fill=white,draw,inner sep=0pt, minimum size=4pt] at (t0000) {};
\node[circle, fill=white,draw,inner sep=0pt, minimum size=4pt] at (t00000) {};
\node[circle, fill=white,draw,inner sep=0pt, minimum size=4pt] at (t00010) {};
\node[circle, fill=white,draw,inner sep=0pt, minimum size=4pt] at (t010) {};
\node[circle, fill=white,draw,inner sep=0pt, minimum size=4pt] at (t0100) {};
\node[circle, fill=white,draw,inner sep=0pt, minimum size=4pt] at (t01000) {};
\node[circle, fill=white,draw,inner sep=0pt, minimum size=4pt] at (t0101) {};
\node[circle, fill=white,draw,inner sep=0pt, minimum size=4pt] at (t01010) {};

\node[label=0:$v_0$,circle, fill=red,inner sep=0pt, minimum size=5pt,left=5.8cm of t9] (v0) {};
\node[label=0:$v_1$,circle, fill=blue,inner sep=0pt, minimum size=5pt,left=2.8cm of t0] (v1) {};
\node[label=180:$v_2$,circle, fill=red,inner sep=0pt, minimum size=5pt,above=0.95cm of v0] (v2) {};
\node[label=0:$v_3$,circle, fill=blue,inner sep=0pt, minimum size=5pt,above=0.95cm of v1] (v3) {};
\node[label=180:$v_4$,circle, fill=red,inner sep=0pt, minimum size=5pt,above=1.0cm of v2] (v4) {};
\node[label=0:$v_5$,circle, fill=blue,inner sep=0pt, minimum size=5pt,above=1.05cm of v3] (v5) {};

\draw[black,thick] (v0) to (v1);
\draw[black,thick] (v0) to (v3);
\draw[black,thick] (v1) to (v2);
\draw[black,thick] (v2) to (v3);
\draw[black,thick] (v2) to (v5);
\draw[black,thick] (v3) to (v4);
\end{tikzpicture}
\end{minipage}
\begin{minipage}{.2\textwidth}
\hspace*{0.5cm}
\vspace*{0.1cm}
\begin{tikzpicture}[grow'=up,scale=.4]
\tikzstyle{level 1}=[sibling distance=3in]
\tikzstyle{level 2}=[sibling distance=1.8in]
\tikzstyle{level 3}=[sibling distance=1in]
\tikzstyle{level 4}=[sibling distance=0.5in]
\tikzstyle{level 5}=[sibling distance=0.2in]
\tikzstyle{level 6}=[sibling distance=0.1in]
\tikzstyle{level 7}=[sibling distance=0.07in]

\node {} coordinate (u9)
child{ coordinate (u0) edge from parent[color=black,thick]
child{coordinate (u00) edge from parent[color=black,thick]
child{coordinate (u000) edge from parent[color=black,thick]
child{coordinate (u0000) edge from parent[color=black,thick]
child{coordinate (u00000) edge from parent[color=black,thick]}
child{coordinate (u00001) edge from parent[color=black,thick]}
}
child{coordinate (u0001) edge from parent[color=white,thick]}
}
child{coordinate (u001) edge from parent[color=black,thick]
child{coordinate (u0010) edge from parent[color=black,thick]
child{coordinate (u00100) edge from parent[color=black,thick]}
child{coordinate (u00101) edge from parent[color=black,thick]}
}
child{coordinate (u0011) edge from parent[color=white,thick]}
}
}
child{coordinate (u01) edge from parent[color=white,thick]}
}
child{coordinate (u1) edge from parent[color=black,thick]
child{coordinate (u10) edge from parent[color=black,thick]
child{coordinate (u100) edge from parent[color=black,thick]
child{coordinate (u1000) edge from parent[color=black,thick]
child{coordinate (u10000) edge from parent[color=black,thick]}
child{coordinate (u10001) edge from parent[color=black,thick]}
}
child{coordinate (u1001) edge from parent[color=white,thick]}
}
child{coordinate (u101) edge from parent[color=black,thick]
child{coordinate (u1010) edge from parent[color=black,thick]
child{coordinate (u10100) edge from parent[color=black,thick]}
child{coordinate (u10101) edge from parent[color=black,thick]}
}
child{coordinate (u1011) edge from parent[color=white,thick]}
}
}
child{coordinate (u11) edge from parent[color=white,thick]}
}
;

\node[label=0:$c_1$,circle, fill=blue,inner sep=0pt, minimum size=5pt] at (u1) {};
\node[label=0:$c_3$,circle, fill=blue,inner sep=0pt, minimum size=5pt] at (u101) {};
\node[label=180:$c_5$,circle, fill=blue,inner sep=0pt, minimum size=5pt] at (u00100) {};

\node[circle,inner sep=0pt, minimum size=5pt,label=90:$\vdots$] at (u00000) {};
\node[circle,inner sep=0pt, minimum size=5pt,label=90:$\vdots$] at (u00001) {};
\node[circle,inner sep=0pt, minimum size=5pt,label=90:$\vdots$] at (u00100) {};
\node[circle,inner sep=0pt, minimum size=5pt,label=90:$\vdots$] at (u00101) {};
\node[circle,inner sep=0pt, minimum size=5pt,label=90:$\vdots$] at (u10000) {};
\node[circle,inner sep=0pt, minimum size=5pt,label=90:$\vdots$] at (u10001) {};
\node[circle,inner sep=0pt, minimum size=5pt,label=90:$\vdots$] at (u10100) {};
\node[circle,inner sep=0pt, minimum size=5pt,label=90:$\vdots$] at (u10101) {};

\node[label=270:$\scriptstyle{U_1(x)}$,label=20:$\hspace{1.3cm}\scriptstyle{E(x,v_0)}$,label=160:$\scriptstyle{\cancel{E}(x,v_0)}\hspace{1.3cm}$,circle, fill=white,draw,inner sep=0pt, minimum size=4pt] at (u9) {};
\node[label=160:$\scriptstyle{\cancel{E}(x,v_1)}\hspace{0.5cm}$,circle, fill=white,draw,inner sep=0pt, minimum size=4pt] at (u0) {};
\node[circle, fill=white,draw,inner sep=0pt, minimum size=4pt] at (u00) {};
\node[circle, fill=white,draw,inner sep=0pt, minimum size=4pt] at (u000) {};
\node[circle, fill=white,draw,inner sep=0pt, minimum size=4pt] at (u001) {};
\node[circle, fill=white,draw,inner sep=0pt, minimum size=4pt] at (u0000) {};
\node[circle, fill=white,draw,inner sep=0pt, minimum size=4pt] at (u00000) {};
\node[circle, fill=white,draw,inner sep=0pt, minimum size=4pt] at (u0010) {};
\node[circle, fill=white,draw,inner sep=0pt, minimum size=4pt] at (u00001) {};
\node[circle, fill=white,draw,inner sep=0pt, minimum size=4pt] at (u00101) {};
\node[label=-20:$\hspace{5mm}\scriptstyle{\cancel{E}(x,v_1)}$,circle, fill=white,draw,inner sep=0pt, minimum size=4pt] at (u10) {};
\node[circle, fill=white,draw,inner sep=0pt, minimum size=4pt] at (u100) {};
\node[circle, fill=white,draw,inner sep=0pt, minimum size=4pt] at (u1000) {};
\node[circle, fill=white,draw,inner sep=0pt, minimum size=4pt] at (u10000) {};
\node[circle, fill=white,draw,inner sep=0pt, minimum size=4pt] at (u10001) {};
\node[circle, fill=white,draw,inner sep=0pt, minimum size=4pt] at (u1010) {};
\node[circle, fill=white,draw,inner sep=0pt, minimum size=4pt] at (u10100) {};
\node[circle, fill=white,draw,inner sep=0pt, minimum size=4pt] at (u10101) {};
\end{tikzpicture}
\end{minipage}}
\end{center}
\caption{Coding tree of $1$-types for the generic bipartite graph\hspace*{2cm}}\label{fig.genbiptree}
\end{figure}


Lastly, we consider free amalgamation classes with relations of higher arity.
The prototypical example of this is the generic $3$-uniform hypergraph, and discussing it should provide the reader with reasonable intuition about coding trees for higher arities.

\begin{example}[The coding tree of $1$-types for the generic $3$-uniform hypergraph]\label{ex.3reghypergraph}
Figure \ref{fig.3hyptree}.\ presents
the coding tree of $1$-types for the generic $3$-uniform hypergraph.
This tree has the property that every node at the same level branches into the same number of immediate successors, as there are no forbidden substructures.
On the left of Figure \ref{fig.3hyptree}.\ is a picture of the hypergraph being built, where $v_n$ is the vertex satisfying the $1$-type of the coding node $c_n$ over the initial segment of the structure restricted to  $\{v_i:i<n\}$.

Since hyperedges involve three vertices, $c_0$ and $c_1$ are both the empty $1$-types.
Technically, these nodes are the same, but we draw them distinctly in Figure 4.\ to aid the drawing of the hypergraph on the left.
Letting $R$ denote the $3$-hyperedge relation,
$c_1$ branches into  two  $1$-types  over $\{v_0,v_1\}$:
$\{\neg R(x,v_0,v_1)\}$ and $\{R(x,v_0,v_1)\}$.
Since $c_2=\{R(x,v_0,v_1)\}$, it follows that $R(v_0,v_1,v_2)$ holds in the hypergraph represented on the left of the tree; this hyperedge is represented by the oval containing these three vertices.

Both nodes on the level of $c_2$ branch into four immediate successors.
This is because
for each node $s$ at the level of $c_2$, the immediate successors of $s$ range over the possibilities of adding a new formula $R(x,\cdot,\cdot)$ or $\neg R(x,\cdot,\cdot)$ containing  the parameter $v_2$
and a choice of either $v_0$ or $v_1$ as the second parameter.
In particular,  the immediate successors of $c_2$ are the $1$-types consisting of  $\{R(x,v_0,v_1)\}$  unioned with  one of the following:
\begin{enumerate}
\item
$\{\neg R(x,v_0,v_2),\neg R(x,v_1,v_2)\}$;
\item
$\{\neg R(x,v_0,v_2),R(x,v_1,v_2)\}$;
\item
$\{ R(x,v_0,v_2),\neg R(x,v_1,v_2)\}$;
\item
$\{ R(x,v_0,v_2),R(x,v_1,v_2)\}$.
\end{enumerate}
Likewise, the immediate successors of the other node $s=\{\neg R(x,v_0,v_1)\}$ in level two of the tree consists of the extensions of $s$ by one of the four above cases.
In general, each node on the level of $c_n$ branches into $2^n$ many immediate successors.
This is because the new formulas in any immediate successor have the choice of $R(x,p,v_n)$ or its negation, where $p\in\{v_i:i<n\}$.
However,
the \Fraisse\ class of  finite $3$-uniform hypergraphs  satisfies \SFAP\ (by
 Proposition 5.2 of Part II), and
  Theorem \ref{thm.SFAPimpliesDCT} will  provide  a  skew subtree coding the generic $3$-hypergraph in which the branching degree is two (that is, a diagonal subtree).

The coding node $c_3$ is the $1$-type $\{\neg R(x,v_0,v_1),  R(x,v_0,v_2),\neg R(x,v_1,v_2)\}$.
Thus, the hypergraph being built on the left has the hyperedge $R(v_0,v_2,v_3)$.
The coding node
$c_4$ is the $1$-type  consisting of $R(x,v_0,v_1),
R(x,v_1,v_2),
R(x,v_2,v_3)$ along with
$\neg R(x,p_0,p_1)$ where $p_0,p_1$ are parameters in $\{v_0,\dots,v_3\}$.
This codes the new hyperedges
$R(v_0,v_1,v_4)$,  $R(v_1,v_2,v_4)$ and $R(v_2,v_3,v_4)$.
\end{example}


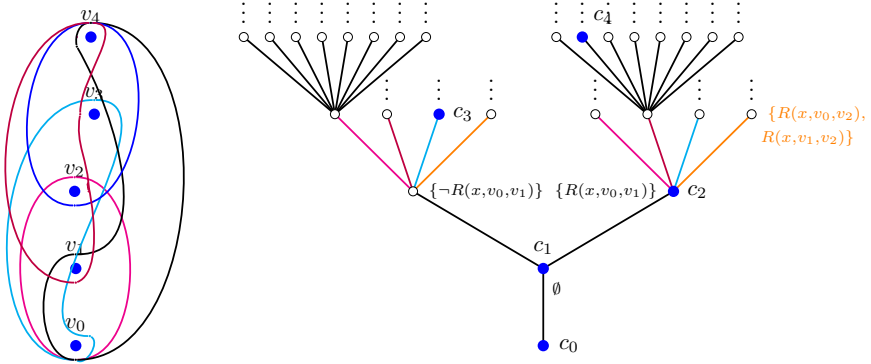
\begin{figure}
\resizebox{12cm}{!}{
\begin{tikzpicture}[grow'=up,scale=.8]
\tikzstyle{level 1}=[sibling distance=3.3in]
\tikzstyle{level 2}=[sibling distance=2in]
\tikzstyle{level 3}=[sibling distance=.4in]
\tikzstyle{level 4}=[sibling distance=0.2in]
\tikzstyle{level 5}=[sibling distance=0.125in]
\node [label=0:$c_0$] {} coordinate (t9)
child{ coordinate (t0) edge from parent[color=black,thick]
child{ coordinate (t00) edge from parent[color=black,thick]
child{ coordinate (t000) edge from parent[color=magenta,thick]
child{coordinate (t0000) edge from parent[color=black,thick]}
child{coordinate (t0001) edge from parent[color=black,thick]}
child{coordinate (t0002) edge from parent[color=black,thick]}
child{coordinate (t0003) edge from parent[color=black,thick]}
child{coordinate (t0004) edge from parent[color=black,thick]}
child{coordinate (t0005) edge from parent[color=black,thick]}
child{coordinate (t0006) edge from parent[color=black,thick]}
child{coordinate (t0007) edge from parent[color=black,thick]}
}
child{coordinate (t001) edge from parent[color=purple,thick]}
child{coordinate (t002) edge from parent[color=cyan,thick]}
child{coordinate (t003) edge from parent[color=orange,thick]}
}
child{ coordinate (t01) edge from parent[color=black,thick]
child{ coordinate (t010) edge from parent[color=magenta,thick]}
child{ coordinate (t011) edge from parent[color=purple,thick]
child{ coordinate (t0110) edge from parent[color=black,thick]}
child{ coordinate (t0111) edge from parent[color=black,thick]}
child{ coordinate (t0112) edge from parent[color=black,thick]}
child{ coordinate (t0113) edge from parent[color=black,thick]}
child{ coordinate (t0114) edge from parent[color=black,thick]}
child{ coordinate (t0115) edge from parent[color=black,thick]}
child{ coordinate (t0116) edge from parent[color=black,thick]}
child{ coordinate (t0117) edge from parent[color=black,thick]}
}
child{ coordinate (t012) edge from parent[color=cyan,thick]}
child{ coordinate (t013) edge from parent[color=orange,thick]}
}
}
;

\node[circle, fill=blue,inner sep=0pt, minimum size=5pt] at (t9) {};
\node[circle, fill=blue,inner sep=0pt, minimum size=5pt,label=90:$c_1$,label=280:$\scriptstyle{\emptyset}$] at (t0) {};
\node[circle, fill=blue,inner sep=0pt, minimum size=5pt,label=0:$c_2$] at (t01) {};
\node[circle, fill=blue,inner sep=0pt, minimum size=5pt,label=0:$c_3$] at (t002) {};
\node[circle, fill=blue,inner sep=0pt, minimum size=5pt,label=60:$c_4$] at (t0111) {};
\node[label=0:$\scriptstyle{\{\neg R(x,v_0,v_1)\}}$] at (t00) {};
\node[label=180:$\scriptstyle{\{R(x,v_0,v_1)\}}$] at (t01) {};
\node[label=280:${\color{orange}\scriptstyle{R(x,v_1,v_2)\}}}$,label=0:${\color{orange}\scriptstyle{\{R(x,v_0,v_2),}}$] at (t013) {};

\node[circle, fill=white,draw,inner sep=0pt, minimum size=4pt] at (t00) {};
\node[circle, fill=white,draw,inner sep=0pt, minimum size=4pt] at (t000) {};
\node[circle, fill=white,draw,inner sep=0pt, minimum size=4pt] at (t001) {};
\node[circle, fill=white,draw,inner sep=0pt, minimum size=4pt] at (t003) {};
\node[circle, fill=white,draw,inner sep=0pt, minimum size=4pt] at (t0000) {};
\node[circle, fill=white,draw,inner sep=0pt, minimum size=4pt] at (t0001) {};
\node[circle, fill=white,draw,inner sep=0pt, minimum size=4pt] at (t0002) {};
\node[circle, fill=white,draw,inner sep=0pt, minimum size=4pt] at (t0003) {};
\node[circle, fill=white,draw,inner sep=0pt, minimum size=4pt] at (t0004) {};
\node[circle, fill=white,draw,inner sep=0pt, minimum size=4pt] at (t0005) {};
\node[circle, fill=white,draw,inner sep=0pt, minimum size=4pt] at (t0006) {};
\node[circle, fill=white,draw,inner sep=0pt, minimum size=4pt] at (t0007) {};
\node[circle, fill=white,draw,inner sep=0pt, minimum size=4pt] at (t010) {};
\node[circle, fill=white,draw,inner sep=0pt, minimum size=4pt] at (t011) {};
\node[circle, fill=white,draw,inner sep=0pt, minimum size=4pt] at (t012) {};
\node[circle, fill=white,draw,inner sep=0pt, minimum size=4pt] at (t013) {};
\node[circle, fill=white,draw,inner sep=0pt, minimum size=4pt] at (t0110) {};
\node[circle, fill=white,draw,inner sep=0pt, minimum size=4pt] at (t0112) {};
\node[circle, fill=white,draw,inner sep=0pt, minimum size=4pt] at (t0113) {};
\node[circle, fill=white,draw,inner sep=0pt, minimum size=4pt] at (t0114) {};
\node[circle, fill=white,draw,inner sep=0pt, minimum size=4pt] at (t0115) {};
\node[circle, fill=white,draw,inner sep=0pt, minimum size=4pt] at (t0116) {};
\node[circle, fill=white,draw,inner sep=0pt, minimum size=4pt] at (t0117) {};

\node[circle, fill=blue,inner sep=0pt, minimum size=5pt, label=$v_0$,left=7.2cm of t9] (v0) {};
\node[circle, fill=blue,inner sep=0pt, minimum size=5pt, label=$v_1$,left=7.2cm of t0] (v1) {};
\node[circle, fill=blue,inner sep=0pt, minimum size=5pt, label=$v_2$,left=9.25cm of t01] (v2) {};
\node[circle, fill=blue,inner sep=0pt, minimum size=5pt, label=$v_3$,left=3.66cm of t000] (v3) {};
\node[circle, fill=blue,inner sep=0pt, minimum size=5pt, label=$v_4$,left=7.98cm of t0112] (v4) {};

\node[circle,inner sep=0pt, minimum size=5pt,label=90:$\vdots$] at (t0000) {};
\node[circle,inner sep=0pt, minimum size=5pt,label=90:$\vdots$] at (t0001) {};
\node[circle,inner sep=0pt, minimum size=5pt,label=90:$\vdots$] at (t0002) {};
\node[circle,inner sep=0pt, minimum size=5pt,label=90:$\vdots$] at (t0003) {};
\node[circle,inner sep=0pt, minimum size=5pt,label=90:$\vdots$] at (t0004) {};
\node[circle,inner sep=0pt, minimum size=5pt,label=90:$\vdots$] at (t0005) {};
\node[circle,inner sep=0pt, minimum size=5pt,label=90:$\vdots$] at (t0006) {};
\node[circle,inner sep=0pt, minimum size=5pt,label=90:$\vdots$] at (t0007) {};
\node[circle,inner sep=0pt, minimum size=5pt,label=90:$\vdots$] at (t001) {};
\node[circle,inner sep=0pt, minimum size=5pt,label=90:$\vdots$] at (t002) {};
\node[circle,inner sep=0pt, minimum size=5pt,label=90:$\vdots$] at (t003) {};
\node[circle,inner sep=0pt, minimum size=5pt,label=90:$\vdots$] at (t010) {};
\node[circle,inner sep=0pt, minimum size=5pt,label=90:$\vdots$] at (t012) {};
\node[circle,inner sep=0pt, minimum size=5pt,label=90:$\vdots$] at (t013) {};
\node[circle,inner sep=0pt, minimum size=5pt,label=90:$\vdots$] at (t0110) {};
\node[circle,inner sep=0pt, minimum size=5pt,label=90:$\vdots$] at (t0111) {};
\node[circle,inner sep=0pt, minimum size=5pt,label=90:$\vdots$] at (t0112) {};
{};
\node[circle,inner sep=0pt, minimum size=5pt,label=90:$\vdots$] at (t0113) {};
{};
\node[circle,inner sep=0pt, minimum size=5pt,label=90:$\vdots$] at (t0114) {};
{};
\node[circle,inner sep=0pt, minimum size=5pt,label=90:$\vdots$] at (t0115) {};
{};
\node[circle,inner sep=0pt, minimum size=5pt,label=90:$\vdots$] at (t0116) {};
{};
\node[circle,inner sep=0pt, minimum size=5pt,label=90:$\vdots$] at (t0117) {};

\node[circle,fill=black,inner sep=0pt, minimum size=1pt,below=.1cm of v0] (u0) {};
\node[circle,fill=purple,inner sep=0pt, minimum size=1pt,below=.1cm of v1] (u1) {};
\node[circle,fill=magenta,inner sep=0pt, minimum size=1pt,above=.1cm of v2] (a2) {};
\node[circle,fill=black,inner sep=0pt, minimum size=1pt,above=.1cm of v1] (a1) {};
\node[circle,fill=blue,inner sep=0pt, minimum size=1pt,below=.1cm of v2] (u2) {};
\node[circle,fill=purple,inner sep=0pt, minimum size=1pt,right=.1cm of v2] (r2) {};
\node[circle,fill=cyan,inner sep=0pt, minimum size=1pt,above=.1cm of v3] (a3) {};
\node[circle,fill=purple,inner sep=0pt, minimum size=1pt,left=.1cm of v3] (l3) {};
\node[circle,fill=white,inner sep=0pt, minimum size=1pt,left=.1cm of v1] (l1) {};
\node[circle,fill=white,inner sep=0pt, minimum size=1pt,right=.1cm of v0] (r0) {};
\node[circle,fill=cyan,inner sep=0pt, minimum size=1pt,above=.1cm of r0] (ar0) {};
\node[circle,fill=black,inner sep=0pt, minimum size=1pt,above=.1cm of v4] (a4) {};
\node[circle,fill=magenta,inner sep=0pt, minimum size=1pt,left=.1cm of v4] (l4) {};
\node[circle,fill=black,inner sep=0pt, minimum size=1pt,below=.1cm of l4] (ul4) {};

\draw[magenta,thick] (u0) to [out=180,in=180] (a2);
\draw[magenta,thick] (u0) to [in=0,out=0] (a2);

\draw[cyan,thick] (u0) to [in=180,out=180] (a3);
\draw[cyan,thick] (a3) to [in=180,out=0] (ar0);
\draw[cyan,thick] (ar0) to [in=-30,out=-30] (u0);

\draw[black,thick] (u0) to [in=180,out=180] (a1);
\draw[black,thick] (a4) to [in=0,out=0] (u0);
\draw[black,thick] (a1) to [in=300,out=0] (ul4);
\draw[black,thick] (ul4) to [in=180,out=100] (a4);

\draw[blue,thick] (u2) to [out=180,in=180] (a4);
\draw[blue,thick] (a4) to [in=0,out=0] (u2);

\draw[purple,thick] (a4) to [in=180,out=180] (u1);
\draw[purple,thick] (u1) to [in=100,out=20] (r2);
\draw[purple,thick] (r2) to [in=260,out=90] (l3);
\draw[purple,thick] (l3) to [in=0,out=90] (a4);
\end{tikzpicture}}
\caption{Coding tree of $1$-types for the generic $3$-uniform hypergraph.\hspace*{8cm}}\label{fig.3hyptree}
\end{figure}


\subsection{Passing types and  similarity}\label{subsec.3.2}

As before, let
$\bK$ be an enumerated \Fraisse\ structure and $\bS:=\bS(\bK)$ be the corresponding coding  tree of $1$-types.
We begin by defining the notion of a subtree of $\bS$.
As is standard in Ramsey theory on infinite trees (see Chapter 6 of \cite{TodorcevicBK10}),
a subtree is not necessarily closed under initial segments,
but rather it is closed under
those portions of initial segments that have certain prescribed lengths.

\begin{defn}[Subtree]\label{defn.subtree}
Let $T$ be a subset of $\bS$, and let $L$ be the set of lengths of
coding nodes in $T$
and lengths of meets
of two incomparable nodes (not necessarily coding nodes) in $T$. Then $T$
is a {\em subtree} of $\bS$
if $T$ is closed under meets and
closed under initial segments with lengths in
$L$, by which we mean
that  whenever   $\ell\in L$ and $t\in T$ with $\ell\le |t|$, then $t\re \ell$ is also a member of $T$.
\end{defn}

We
now describe the
 natural correspondence from subtrees of $\bS$ to  substructures of $\bK$.
 The following notation will aid in the translation.

 \begin{notation}\label{notn.KreA}
Given a subtree $A\sse\bS$, let
 $\lgl c^A_n:n< N\rgl$ denote the  enumeration of
 the coding nodes of $A$  in order of increasing length, where $N\le \om$ is the number of coding nodes in $A$.
 Let
 \begin{equation}
 \mathrm{N}^A:=\{i\in \om:   \exists m\, (c_i=c^A_m)\},
 \end{equation}
 the set of indices $i$ such that $c_i$ is a coding node in $A$.
 For $n<N$,
 let
\begin{equation}
 \mathrm{N}^A_n:=\{i\in   \mathrm{N}^A:   \exists m<n\, (c_i=c^A_m)\},
\end{equation}
the set of indices  of the first  $n$
coding nodes  in $A$.
  Recall that $\om$ is the set of vertices for $\bK$,
 and that we  often  use   $v_i$ to denote $i$,  the $i$-th vertex of $\bK$.
Thus,  $\mathrm{N}^A$ is precisely the set of vertices of $\bK$
 represented by the coding nodes in $A$.
  Let $\bK\re A$ denote the
 substructure of $\bK$
on  universe  $ \mathrm{N}^A$.
 We call this the
 {\em substructure of $\bK$
 represented by the coding nodes in $A$}, or simply {\em  the substructure represented by $A$}.
\end{notation}

The next definition extends the notion   of {\em passing number}  developed in  \cite{Laflamme/Sauer/Vuksanovic06} and \cite{Sauer06} to
code
binary relations
using
 pairs of nodes
in regular splitting trees.
 Here, we extend this notion to relations of any arity.

Recall from
the discussion after
Definition \ref{defn.treecodeK}
that for $s\in \bS$,
 $s(0)$  denotes the set of formulas in $s$
 without parameters; and for
for  $1\le i<|s|$,
 $s(i)$ denotes  the set of those formulas in $s\re \bK_i$
 in which $v_{i - 1}$ appears.

\begin{defn}[Passing Type]\label{defn.passingtype}
Given $s,t\in \bS$  with $|s|<|t|$,
we call
  $t(|s|)$
the {\em passing type of $t$  at $s$}.
We also call $t(|s|)$
the {\em passing type of $t$ at $c_n$},
where $n+1 = |s|$,
as $|c_n|=n+1$.

Let $A$ be a subtree of $\bS$, $t$ be a node in $\bS$,
and
 $c_n$ be a coding node in $\bS$ such that  $|c_n|<|t|$.
We write $t(c_n;A)$ to denote the set of those formulas in
$t(|c_n|)$ in which all  parameters  are  from among
$\{v_i: i\in  \mathrm{N}^A_m\cup\{n\}\}$,
where $m$ is least such that $|c^A_m|\ge |c_n|$.
We call $t(c_n;A)$ the {\em passing type of $t$ at $c_n$ over $A$}.

Given  a  coding node $c_n^A$ in $A$, we write
 $t(n;A)$ to denote  $t(c^A_n;A)$,
  and call this
the {\em passing type of $t$ at $n$ over $A$}.
\end{defn}

Note that passing types are  partial types which   do  not include any unary relation symbols.
Thus,  one can have realizations of the same passing type by elements which differ on the unary relations.
Further, note that the passing type of $t$ at $s$  only takes into consideration the length of $s$, not $s$ itself.
Writing the ``passing type of $t$ at $s$'' rather than ``passing type of $t$ at $|s|$'' continues the convention set forth in  \cite{Laflamme/Sauer/Vuksanovic06}, \cite{Sauer06}, and continued in  all papers following  on these two.

\begin{rem}\label{rem.pnspecial case}
In the  case where the language $\mathcal{L}$ only has binary relation symbols,
passing type  reduces to  the concept of passing number,  first defined and  used in  \cite{Laflamme/Sauer/Vuksanovic06} and
\cite{Sauer06}  and later used  in \cite{DobrinenJML20}, \cite{DobrinenH_k19}, \cite{DobrinenRado19}, \cite{Zucker20}.
This is  because
for binary relational structures,
the tree $\bS$ has a bounded degree of  branching.
In the special case
of the Rado graph,
where the language has exactly one binary relation, say $E$,  the tree $\bS$ is regular $2$-branching and may be correlated with the tree of
finite
sequences of $0$'s and $1$'s;
then
the passing number  $0$ of $t$ at  $s$  corresponds to the passing type
generated by
$\{\neg R(x,v_{|s|})\}$, and
 the passing number  $1$ of $t$ at  $s$  corresponds to the passing type
 generated by
 $\{R(x,v_{|s|})\}$.

In the case of the rationals,
the  coding tree  of $1$-types $\bS$ for $\bQ$  provides a minimalistic  way to view the work of Devlin in
 \cite{DevlinThesis},
 as $\bS$ branches exactly at coding nodes and nowhere else.
 In our set-up, any antichain of coding nodes is automatically a so-called diagonal antichain,
as defined in Subsection \ref{subsec.3.3}.
 This differs from the  previous approaches to big Ramsey degrees of $\bQ$
 in \cite{LavUnp} and  \cite{DevlinThesis} (see also \cite{TodorcevicBK10}),
  which  use  the binary branching tree, Milliken's theorem, and the method of  envelopes.
\end{rem}

We will need to be able to compare structures represented by  different sets of coding nodes in $\bS$.
The next notion provides a way to do so.

Recall that $x$ is  the variable used in all $1$-types in $\bS$. Given subsets $X$ and $Y$ of $\om$ and map $f: X \to Y$, let
$f^*: X \cup \{x\} \to Y \cup \{x\}$ be the extension of $f$ given by $f^*(x) = x$.

\begin{defn}[Similarity of Passing Types over Subsets]\label{defn.prespt}
Let $A$ and $B$ be  subsets of $\bS$,  and let $m,n\in\om$
 be such that
 $\mathrm{N}^A\cap m$ has the same number of elements
  as $\mathrm{N}^B\cap n$, say $p$.
  Let $f$ be the increasing  bijection  from $\mathrm{N}^A_p$ to $\mathrm{N}^B_p$.
Suppose  $s,t\in\bS$  are such that $|c_m|<|s|$ and $|c_n|<|t|$.
We write
\begin{equation}
s(c_m;A)\sim t(c_n;B)
\end{equation}
when,
given any
relation symbol $R\in \mathcal{L}$ of arity $k$
and  $k$-tuple $(z_0,\dots, z_{k - 1})$, where all $z_i$ are from among $\{v_i:i\in \mathrm{N}^A_p\}\cup\{x\}$ and at least one $z_i$ is the variable $x$,
we have that
  $R(z_0,\dots, z_{k-1})$
   is in $s(c_m;A)$ if and only if
  $R(f^*(z_0),\dots,f^*(z_{k-1}))$ is in $t(c_n;B)$.
   When $s(c_m;A)\sim t(c_n;B)$ holds,
 we say that  the passing type of  $s$ at $c_m$ over $A$ is {\em similar} to the passing type of $t$ at $c_n$ over $B$.

If $A$ and $B$ each have  at least $n+1$ coding nodes,
then
for  $s,t\in\bS$ with
 $|c^A_n|<|s|$  and
 $|c^B_n|<|t|$,
define
\begin{equation}
s(n;A)\sim t(n;B)
\end{equation}
  to mean that $s(c^A_n;A)\sim t(c^B_n;B)$.
 When $s(n;A)\sim t(n;B)$,
 we say that {\em $s$ over $A$ and $t$ over $B$ have similar passing types at the $n$-th coding node},
 or that {\em the passing type of  $s$ at $n$ over $A$ is similar to the passing type of $t$ at $n$ over $B$}.
\end{defn}

It is clear that for fixed $n$, $\sim$ is an equivalence relation
on passing types over subsets of $\bS$.

The following fact is the essence of why we are interested in similarity of passing types:
They tell us exactly when two structures represented by coding nodes are isomorphic
 as substructures of the enumerated structure $\bK$; that is, when there exists an
 $\mathcal{L}$-isomorphism between the structures that preserves the order relation on their underlying sets
 inherited from $\om$.

\begin{fact}\label{fact.simsamestructure}
Let $A$ and $B$ be subsets of $\bS$  and $n<\om$ such that  $A$ and $B$ each have  $n+1$ many coding nodes.
Then
the substructures $\bK\re A$ and $\bK\re B$ are isomorphic, as ordered substructures of $\bK$,
if and only if
\begin{enumerate}
\item
For each $i\le n$, the $1$-types
$c^A_i$   and $c^B_i$ contain
the same parameter-free formulas;
and
\item
For all $i<j\le n$, $c^A_{j}(i;A)\sim c^B_j(i;B)$.
\end{enumerate}
\end{fact}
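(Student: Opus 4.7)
The unique candidate for an order-preserving $\mathcal{L}$-isomorphism $\phi: \bK \re A \to \bK \re B$ is the map $\phi(i_k) = j_k$, where $\mathrm{N}^A = \{i_0 < \cdots < i_n\}$ and $\mathrm{N}^B = \{j_0 < \cdots < j_n\}$ with $c^A_k = c_{i_k}$ and $c^B_k = c_{j_k}$. The plan is to show that $\phi$ is an $\mathcal{L}$-isomorphism if and only if (1) and (2) both hold, by decomposing the preservation of each relational atomic formula into information recorded by the parameter-free part of a coding node (for unary relations) and by a passing type (for relations of arity $\ge 2$).

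It suffices to check preservation of each relation symbol $R$ of arity $r$ on each $r$-tuple of distinct elements of $\mathrm{N}^A$: by the no-loops convention, $R$ fails on tuples with repeats, and the $\phi$-image of such a tuple also has repeats. For $r=1$, the truth of $R(v_{i_k})$ is equivalent to the presence of the parameter-free formula $R(x)$ in $c^A_k$, so condition (1) exactly captures preservation of every unary relation.

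For $r \ge 2$ with distinct indices $l_1, \ldots, l_r$, set $l^* := \max_j l_j$ and let $l^{**}$ be the next-largest. Replacing $v_{i_{l^*}}$ by the variable $x$ in $R(v_{i_{l_1}}, \ldots, v_{i_{l_r}})$ yields a formula whose largest-indexed parameter is $v_{i_{l^{**}}}$; it therefore lies in $c^A_{l^*}(|c^A_{l^{**}}|)$, and since every parameter is indexed by some $i_p$ with $p \le l^{**}$, it in fact lies in the passing type $c^A_{l^*}(l^{**}; A)$. The similarity of Definition~\ref{defn.prespt}, applied via the order-preserving bijection between the first $l^{**}+1$ coding-node indices of $A$ and of $B$ (and fixing $x$), transports the formula between $c^A_{l^*}(l^{**}; A)$ and $c^B_{l^*}(l^{**}; B)$. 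Thus (2) captures preservation of every relation of arity at least $2$, and combining the cases yields both directions of the biconditional.

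The main step needing care is the bookkeeping above: verifying that every atomic $\mathcal{L}$-formula on distinct vertices of $\mathrm{N}^A$ is recorded in exactly one passing type $c^A_{l^*}(l^{**}; A)$, determined by the two largest indices in its parameter tuple, and that the similarity bijection of Definition~\ref{defn.prespt} restricts correctly to the parameters of that formula. Once this is in place, the result follows by reading off the atomic diagrams of $\bK \re A$ and $\bK \re B$ from the respective sequences of coding-node $1$-types $\langle c^A_k : k \le n\rangle$ and $\langle c^B_k : k \le n\rangle$.
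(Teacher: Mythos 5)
The paper presents Fact~\ref{fact.simsamestructure} without proof, so there is nothing to compare against directly; assessed on its own, your argument is correct and is the natural one. You rightly reduce to the unique increasing bijection $\phi$ between $\mathrm{N}^A$ and $\mathrm{N}^B$, discard tuples with repeated entries via the no-loops convention, handle unary atomic facts through the parameter-free parts of the coding nodes (condition (1)), and show that each atomic $R$-fact of arity $r\ge 2$ on distinct vertices of $\mathrm{N}^A$ is recorded in exactly one passing type $c^A_{l^*}(l^{**};A)$, indexed by the two largest coordinates appearing, where the increasing bijection underlying $\sim$ (Definition~\ref{defn.prespt}) transports it to the corresponding fact on the $\phi$-image. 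Reading the atomic diagrams of $\bK\re A$ and $\bK\re B$ off the sequences $\langle c^A_k : k\le n\rangle$ and $\langle c^B_k : k\le n\rangle$ via this disjoint decomposition yields both directions.

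One remark on the underlying definitions rather than on a gap in your write-up: Definition~\ref{defn.prespt} as literally stated draws the $z_i$ from $\{v_i : i\in\mathrm{N}^A_p\}\cup\{x\}$, which does not include $v_m$, even though every formula of $s(c_m;A)$ must have $v_m$ as a parameter; taken at face value this would make the similarity relation vacuously true. The intended reading, which your proof implicitly adopts when you invoke the increasing bijection on the first $l^{**}+1$ coding-node indices, also allows $z_i=v_m$ and extends $f$ by $m\mapsto n$. With that reading your argument is complete.
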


We now extend the similarity relation on passing types over subsets of $\bS$ to a relation on subtrees of $\bS$ that preserves
tree structure. For this, we first define a
(strict)
linear order $\prec$ on $\bS$:
We may assume there is a linear ordering on the relation symbols
and negated relation symbols in $\mathcal{L}$,
with the convention that
all the
negated relation symbols  appear in the linear order before the  relation symbols.
(We make this convention to support the intuition that ``moving left'' from a node in a tree indicates that a relation does not hold, while ``moving right'' suggests that it does; the convention is not necessary for our results.)
 Extend the usual linear order $<$ on $\om$, the underlying set of $\bK$,
to the set $\{x\} \cup \om$ by setting $x<n$ for each $n \in \om$.
Let $(\{x\} \cup \om)^{<\om}$,  the set of finite sequences from $\{x\} \cup \om$, have the induced lexicographic order.
Then the induced lexicographic order on the set
$$
(\{ R : R\in\mathcal{L}\} \cup \{\neg R : R \in \mathcal{L}\}) \times (\{x\} \cup \om)^{<\om}
$$
is a linear order on the set of atomic and negated atomic formulas of $\mathcal{L}$ that have one free variable $x$ and parameters
from $\omega$.  Since any node of $\bS$ is completely determined by such atomic and negated atomic formulas, this
lexicographic order gives rise to a linear order on $\bS$, which we denote $\prec$.
 Observe that by definition of the lexicographic ordering, we have:
 If
 $s \subsetneq t$,
then $s\prec t$; and
for any incomparable   $s,t \in\bS$, if
 $|s\wedge t|=n$,
 then
$s\prec t$ if and only if $s\re(n+1)\prec t\re (n+1)$.
This order $\prec$
generalizes
the lexicographic order  for the case of binary relational structures in \cite{Sauer06}, \cite{Laflamme/Sauer/Vuksanovic06}, \cite{DobrinenJML20}, \cite{DobrinenH_k19}, and \cite{Zucker20}.

\begin{defn}[Similarity Map]\label{def.ssmap}
Let  $S$ and $T$  be meet-closed subsets
of $\bS$.
A function $f:S\ra T$ is a {\em similarity map} of $S$ to $T$ if for all nodes
$s, t \in \bS$,
the following hold:
\begin{enumerate}
\item
$f$ is a bijection which preserves $\prec$:
$s\prec t$ if and only if $f(s)\prec f(t)$.

\item
$f$ preserves meets, and hence splitting nodes:
$f(s\wedge t)=f(s)\wedge f(t)$.

\item
$f$ preserves relative lengths:
$|s|<|t|$ if and only if
$|f(s)|<|f(t)|$.

\item
$f$ preserves initial segments:
$s\sse t$ if and only if $f(s)\sse f(t)$.

\item
$f$ preserves  coding  nodes and their
parameter-free formulas:
  Given a coding node $c_n^S\in S$, $f(c_n^S)=c^T_n$;
moreover,
for  $\gamma\in\Gamma$,
$\gamma(v^S_n)$ holds in $\bK$
  if and only if
 $\gamma(v^T_n)$ holds in $\bK$,
 where $v^S_n$ and $v^T_n$ are the vertices of $\bK$ represented by coding nodes
 $c^S_n$ and $c^T_n$, respectively.

\item
$f$ {\em preserves relative passing types} at coding nodes:
$s(n;S)\sim f(s)(n;T)$, for each $n$ such that $|c^S_n|<|s|$.
\end{enumerate}

When there is a similarity map between $S$ and $T$, we say that $S$ and $T$  are {\em similar} and  we write $S\sim T$.
Given a subtree $S$ of $\bS$,
we let $\Sim(S)$ denote the collection of all subtrees  $T$of $\bS$ which are similar to $S$.
If $T'\sse T$ and $f$ is a  similarity map of $S$ to $T'$, then we say that  $f$ is a {\em similarity embedding} of $S$ into $T$.
\end{defn}

\begin{rem}
It follows from  (2)  that
 $s$ is a splitting node in $S$ if and only if  $f(s)$ is a splitting node in $T$.
 Moreover, if $s$ is a splitting node in $S$,
 then $s$ has the same number of immediate successors in $S$ as $f(s)$ has in $T$.
Similarity is an equivalence relation
on the subtrees of $\bS$,
since the
identity map is a similarity map, the
inverse of a  similarity map is a similarity map, and the composition of two similarity maps is a  similarity map.

Our notion of {\em similarity}   extends  the notion of {\em strong similarity} in \cite{Sauer06} and  \cite{Laflamme/Sauer/Vuksanovic06} for trees without coding nodes, and in \cite{DobrinenJML20} and \cite{DobrinenH_k19} for trees with coding nodes.
We drop the word {\em strong} to make the terminology more efficient, since there is only one notion of similarity  used in this paper.
\end{rem}

Given two
substructures  $\bF,\bG$ of $\bK$, we write $\bF\cong^{\om} \bG$
when there exists an $\mathcal{L}$-isomorphism between $\bF$ and $\bG$ that preserves
 the linear order on
their universes inherited from $\om$.
Note that for any subtrees
$S,T$ of $\bS$,
$S\sim T$ implies that $\bK\re S\cong^{\om}\bK\re T$.


\section{Diagonal coding trees and \texorpdfstring{SDAP$^+$}{SDAP+}}

In this section, we introduce concepts utilized in the proof of indivisibility in this paper and in the simple characterization of big Ramsey degrees in \cite{CDPII}.

\subsection{Diagonal coding trees}\label{subsec.3.3}

Our approach to proving indivisibility and to finding exact big Ramsey degrees for structures with unary and  binary relations
 starts with the kinds of trees  that will
actually produce  indivisibility as well as the  exact degrees, upon taking a subcopy of $\bK$ represented by an antichain of coding nodes in such trees.
Namely, we will work with {\em diagonal coding trees}.

First, the following modification of
 Definition \ref{defn.treecodeK}  of $\bS(\bK)$  will be useful especially for \Fraisse\ classes which have both non-trivial unary relations and a linear order or some  similar
relation, such as the betweenness relation.
Recall    that $\Gamma$ denotes the set of complete
$1$-types having only parameter-free formulas;
in particular, the only relation symbols that can occur in any $\gamma \in \Gamma$ will be unary.

\begin{defn}[The Unary-Colored  Coding Tree of $1$-Types,
$\bU(\bK)$]\label{defn.ctU}
Let $\mathcal{K}$ be a \Fraisse\ class in language $\mathcal{L}$ and $\bK$ an enumerated \Fraisse\ structure for $\mathcal{K}$.
For $n < \om$,
let $c_n$ denote the $1$-type
of $v_n$
over $\bK_n$
(exactly as in the definition of $\bS(\bK)$).
Let $\mathcal{L}^-$ denote
the collection of all relation symbols in $\mathcal{L}$ of arity greater than one,
and let $\bK^-$ denote the reduct of $\bK$ to $\mathcal{L}^-$
and $\bK_n^-$ the reduct of $\bK_n$ to $\mathcal{L}^-$.

For $n<\om$, define the {\em $n$-th level, $\bU(n)$}, to  be
the collection
of all $1$-types
$s$ over $\bK^-_n$
in the language $\mathcal{L}^-$
such that
for some $i\ge n$,
$v_i$ satisfies $s$.
Define $\bU$
 to be
$\bigcup_{n<\om}\bU(n)$.
The tree-ordering on $\bU$ is simply inclusion.
The {\em unary-colored coding tree of $1$-types}
is
the tree $\bU$ along with the  function $c:\om\ra \bU$ such that $c(n)=c_n$.
Thus,
$c_n$ is the $1$-type
(in the language $\mathcal{L}^-$) of $v_n$
 in $\bU(n)$  along with the additional ``unary color''
$\gamma\in\Gamma$ such that
$\gamma(v_n)$ holds in $\bK$.
 \end{defn}

Note that $\bK^-$ is not necessarily
a \Fraisse\ structure, as the collection of reducts of members of $\mathcal{K}$ to $\mathcal{L}^-$ need not be a
\Fraisse\ class.
This poses no problem to our uses of $\bU$ or to the results.

\begin{rem}\label{rem.bU}
 In the case that $\mathcal{K}$ has no  unary relations, $\bU$ is  the same as $\bS$.
Otherwise, the difference between $\bU$ and $\bS$
is that all non-coding nodes in $\bU$  are
complete  $1$-types over initial segments of $\bK^-$
in the language $\mathcal{L}^-$,
while all
nodes in $\bS$, coding or non-coding,
are complete $1$-types over initial segments of $\bK$
in the language $\mathcal{L}$.
In particular,  $\bS(0)$ equals $\Gamma$, while
$\bU(0)$
 has  exactly one node, $c_0$.

Definition
\ref{defn.passingtype} of passing type applies  to   $\bU$, as the notion of  passing type involves no unary relations.
Definition
 \ref{defn.prespt}
 of  similarity of passing types
 and Definition \ref{def.ssmap} of similarity maps both apply
to  $\bU$,
 since the notion of coding nodes is the same in both $\bS$ and $\bU$.
Working inside  $\bU$ instead of $\bS$ makes the upper bound arguments   for \Fraisse\ classes with both
a linear order
and unary relations simpler, lining up with the previous approach for big Ramsey degrees of $\bQ_n$ in \cite{Laflamme/NVT/Sauer10}.
This set-up will allow us to do one
uniform
forcing proof in the next section for all classes satisfying \EEAP$^+$.
For  classes with \SFAP, the exact  bound proofs will return to the $\bS$ setting.

Lastly, we point out that the tree $\bU$ extends the approach used by Zucker in \cite{Zucker20} for  certain  free amalgamation  classes
with binary and unary relations.
\end{rem}

The following definition of {\em diagonal}, motivated by Definition 3.2 in \cite{Laflamme/Sauer/Vuksanovic06},
can be found in \cite{DobrinenJML20} and \cite{DobrinenH_k19}.

\begin{defn}[Diagonal tree]\label{def.diagskew}
We call a subtree $T\sse \bS$
or $T\sse\bU$
{\em diagonal}
if each level of $T$ has at most one splitting node,
each splitting node in $T$ has degree two (exactly two immediate successors), and
coding node levels in $T$ have no splitting nodes.
\end{defn}



\begin{notation}\label{notn.cong<}
Given a diagonal subtree $T$
(of $\bS$ or $\bU$)
with coding nodes,
we let
$\lgl c^T_n: n<N\rgl$,
where $N\le\om$,
  denote the enumeration of the coding nodes in $T$ in order of increasing length.
Let $\ell^T_n$ denote  $|c^T_n|$, the {\em length} of $c^T_n$.
We shall call a node in $T$ a {\em critical node} if it is either a splitting node or a coding node in $T$.
Let
\begin{equation}
\widehat{T}=\{t\re n:t\in T\mathrm{\ and \ } n\le |t|\}.
\end{equation}
Given
$s\in T$ that is not a splitting node in $T$,
we let $s^+$ denote the immediate successor of $s$ in $\widehat{T}$.
Given any
$\ell$,
we let $T\re\ell$ denote the set of those nodes in $\widehat{T}$ with length $\ell$,
and we let
  $T\rl \ell$
  denote the
  union of the
  set of nodes in $T$ of length less than  $\ell$
  with the set $T\re\ell$.
\end{notation}

Extending
Notation \ref{notn.KreA} to subtrees $T$ of either $\bS$ or $\bU$, we write $\bK\re T$ to denote the substructure of
$\bK$ on $\mathrm{N}^T$, the set of vertices of $\bK$ represented by the coding nodes in $T$.

\begin{defn}[Diagonal Coding Subtree]\label{defn.sct}
A subtree $T\sse\bU$ is called a {\em diagonal coding subtree} if $T$ is diagonal and  satisfies the following properties:
  \begin{enumerate}
 \item
   $\bK\re T\cong\bK$.
  \item
  For each $n<\om$, the collection of $1$-types
  in
  $T\re (\ell^T_n+1)$ over $\bK\re (T\rl \ell^T_n)$
   is in
    one-to-one correspondence with the collection of
   $1$-types in $\bU(n+1)$.
   \item[(3)]
Given $m<n$
and letting
  $A:=T\rl(\ell^T_m-1)$,
if $c^T_n\contains c^T_m$
 then
 $$
  (c^{T}_n)^+(c^{T}_n; A)
  \sim
  (c^{T}_m)^+(c^{T}_m; A).
$$
\end{enumerate}
Likewise, a subtree $T\sse\bS$ is a {\em diagonal coding subtree} if the above hold with $\bU$ replaced by $\bS$.
\end{defn}

\begin{rem}
Requirement (3) aids in the proofs in the next section and can be  met by
the \Fraisse\ limit of
any \Fraisse\ class satisfying \EEAP.
Note that if $T\sse\bU$
(or $T\sse\bS$)
satisfies (3),
then any subtree $S$ of $T$ satisfying $S\sim T$ automatically satisfies (3).
\end{rem}

Now we are prepared to define the Diagonal Coding Tree Property, which is an assumption in Definition \ref{defn_EEAP_newplus}
of \EEAP$^+$.
We say that a tree  $T$ is  {\em perfect}  if $T$ has  no terminal nodes, and  each node in  $T$ has  at least  two  incomparable extensions in $T$.

Recall  our assumption  that any \Fraisse\ class $\mathcal{K}$ that we consider has
 at least one non-unary relation symbol in its language.
We make this assumption because
if $\mathcal{K}$  has  only unary relation symbols in its language, then $\bS$ is a disjoint union of finitely many infinite branches.
In this case,  finitely many applications of  Ramsey's Theorem  will yield finite big Ramsey degrees.

We point out that whenever $\mathcal{K}$ satisfies \SFAP,
the trees 
  $\bS$ and  $\bU$ are perfect. 
However, there  are \Fraisse\ classes
in binary relational languages
 that satisfy \EEAP, and yet for which the trees $\bS$ and  $\bU$ are not perfect; for example, certain \Fraisse\ classes of ultrametric spaces.
In such cases,
Theorem  \ref{thm.matrixHL}
does not apply, as
the forcing posets
used in its  proof
are atomic.
Thus, one of the requirements for \EEAP$^+$ is that there is a perfect subtree of $\bU$ which codes a copy of $\bK$, whenever $\mathcal{L}$  has relation symbols of arity greater than one.
This is an ingredient in the next property.

\begin{defn}[Diagonal Coding Tree Property]\label{defn.DCTP}
A \Fraisse\ class $\mathcal{K}$ in language $\mathcal{L}$  satisfies the {\em Diagonal Coding Tree Property}
if  given any enumerated \Fraisse\ structure $\bK$
for $\mathcal{K}$,
there is a diagonal coding
subtree $T$ of either $\bS$ or $\bU$
such that $T$ is perfect.
\end{defn}

From here
through most of Section
\ref{sec.FRT},
we will simply work in $\bU$ to avoid duplicating arguments, noting that
for \Fraisse\ classes with \SFAP, or
without \SFAP\ but with \Fraisse\ limits having
\EEAP$^+$ and
in a language with no unary relation symbols,
the following can all be done inside $\bS$.

We now define the  space of coding subtrees of $\bU$ with which we shall be working.

\begin{defn}[The Space of  Diagonal Coding Trees of $1$-Types, $\mathcal{T}$]\label{def.subtree}
Let $\bK$ be any enumerated \Fraisse\  structure
and let $\bT$ be a fixed diagonal coding subtree of $\bU$.
Then the space of coding trees
 $\mathcal{T}(\bT)$ consists of all   subtrees $T$ of $\bT$ such that
 $T\sim\bT$.
Members of $\mathcal{T}(\bT)$ are called simply {\em coding trees}, where diagonal is understood to be  implied.
We shall usually simply write $\mathcal{T}$ when $\bT$ is clear
from context.
For $T\in \mathcal{T}$,  we write
$S\le T$ to mean that  $S$ is a  subtree of $T$ and $S$ is a member of $\mathcal{T}$.
\end{defn}

\begin{rem}
Given  $\bT$ satisfying   (1)--(3) in Definition \ref{defn.sct}, if $T\sse \bT$  satisfies $T\sim \bT$,  then $T$ also satisfies (1)--(3).
Any tree $T$ satisfying  (1) and (2)  has no terminal nodes and has coding nodes dense in $T$.
Condition (2) implies that
the \Fraisse\ structure $\bJ:=\bK\re T$  represented by $T$ has the following  property:
For
any $i-1<j<k$ in $\mathrm{J}$ satisfying
$\bJ\re (i\cup\{j\}) \cong \bJ\re (i\cup\{k\})$,
it holds
 that
$\type(j/\bK_i)=\type(k /\bK_i)$;
equivalently,  that whenever
two vertices in $\mathrm{J}$ are in the same orbit  over
$\bJ_i$ in $\bJ$, they
are in the same orbit over $\bK_i$
in $\bK$.
\end{rem}

The
first use
of diagonal subtrees of the infinite binary tree in characterizing exact big Ramsey degrees
 was
for
 the rationals in \cite{DevlinThesis}.
Diagonal subtrees of the infinite binary tree  turned out to be at the heart of
  characterizing the exact big Ramsey degrees of
the Rado graph  as well as of the generic directed graph and the  generic tournament in
\cite{Sauer06} and
\cite{Laflamme/Sauer/Vuksanovic06}.
More generally,
diagonal subtrees of  boundedly branching trees turned out to be central to the characterization of
big Ramsey degrees of unconstrained
 structures with finitely many binary relations
 in
\cite{Sauer06} and
\cite{Laflamme/Sauer/Vuksanovic06}.
More recently,
  characterizations of the  big Ramsey degrees
for    triangle-free graphs were found to
   involve
 diagonal  subtrees (\cite{DobrinenJML20},\cite{DobrinenH_3ExactDegrees20}), and similarly, for free amalgamation classes with finitely
 many binary relations and finitely many
 finite
 forbidden irreducible substructures  on three or more vertices (\cite{Balko7},\cite{DobrinenH_k19},\cite{Zucker20}).
 However, in these cases,
  properties additional to being diagonal are essential to characterizing their big Ramsey degrees;
  hence, their big Ramsey degrees do not have a ``simple'' characterization solely in terms of similarity types of antichains of coding nodes in diagonal coding trees.
We will prove that, similarly to the rationals and the Rado graph,
 all
unary and binary relational \Fraisse\  classes  with \Fraisse\ structure satisfying  \EEAP$^+$ have big Ramsey degrees which are  characterized  simply by similarity types of
antichains of coding nodes in diagonal  coding trees, along with the passing types of their coding nodes.

Recalling from Notation \ref{notn.cong<}
that
  $t\in T$ is called  a
{\em critical node}
if $t$ is either a splitting node or a coding node in $T$,
any two critical nodes in a diagonal coding tree
 have different lengths, and thus,
the levels of $T$ are designated by the lengths of the critical  nodes in $T$.
(This follows from the definition of {\em diagonal}.)
 If $\lgl d^T_m:m<\om\rgl$  enumerates  the critical nodes in $T$ in order of strictly increasing length,
then we let
$T(m)$ denote the
collection of those nodes in $T$ with length $|d^T_m|$, which we call the {\em $m$-th level} of $T$.

Given a substructure $\bJ$ of $\bK$,
we let $\bU\re\bJ$ denote the subtree of $\bU$
induced by the meet-closure of the coding nodes $\{c_n:n\in \mathrm{J}\}$.
 We call $\bU\re \bJ$ the {\em subtree of $\bU$ induced by $\bJ$}.
 If $\bJ=\bK\re T$ for some  $T\in\mathcal{T}$,
 then $\bU\re \bJ=T$, as $T$ being  diagonal ensures that the  coding nodes in $\bU\re \bJ$ are exactly those in  $T$.

The final work in this subsection is to prove that 
  \Fraisse\ classes  satisfying SFAP,  as well as their ordered expansions,
have \Fraisse\ limits satisfying the Diagonal Coding Tree Property.
The following notation will be used in  the rest of this subsection.
Given  $j<\om$, sets  vertices $\{v_{m_i}:i<j\}$ and
$\{v_{n_i}:i<j\}$, and
$1$-types $s,t\in\bS$ such that $|s|>m_{j-1}$ and $|t|>n_{j-1}$, we will  write
\begin{equation}
s\re (\bK\re\{v_{m_i}:i<j\})\sim t\re (\bK\re\{v_{n_i}:i<j\})
\end{equation}
exactly when, for each
$i<j$,
$s(c_{m_i}; \{c_{m_k}:k<i\})\sim
t(c_{n_i}; \{c_{n_k}:k<i\})$.

\begin{thm}\label{thm.SFAPimpliesDCT}
\SFAP\ implies
the Diagonal Coding Tree Property. 
\end{thm}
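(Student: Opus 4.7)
The plan is to construct a perfect diagonal coding subtree $T \sse \bU$ (or $T \sse \bS$, in the absence of unary relations) by recursion, adding one critical node at a time, and applying \SFAP\ at each stage to verify that the extension is realized in $\bU$.

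First, I would fix a combinatorial skeleton for $T$: choose strictly increasing lengths $\ell^T_0 < \ell^T_1 < \cdots$ for the coding nodes, together with enough splitting levels strictly between $\ell^T_n$ and $\ell^T_{n+1}$ to leave room for condition~(2) of Definition~\ref{defn.sct}. Since each binary splitting increments the branch count by one and coding node levels do not split, the number of splittings inserted between $\ell^T_n$ and $\ell^T_{n+1}$ must be chosen so that the set of nodes of $\widehat{T}$ at length $\ell^T_{n+1}+1$, restricted to the substructure on the coding vertices $\{v^T_0,\ldots,v^T_{n+1}\}$, is placed in bijection with $\bU(n+2)$.

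Next, I would carry out the recursion. Suppose $T_n$ is a finite diagonal subtree of $\bU$ containing $c^T_0,\ldots,c^T_n$ together with the prescribed intermediate splittings, and satisfying a finite approximation of (1)--(3). At each planned splitting level above $c^T_n$, I would pick a node $s$ in the current top level and include in $T$ both of its immediate successors in $\bU$; \SFAP, combined with disjoint amalgamation, ensures that each of the two candidate extensions, obtained from $s$ by adding an atomic formula or its negation, is realized in $\bK$ and can be further extended cofinally. The splitting decisions would be orchestrated so that by level $\ell^T_{n+1}+1$ the restricted types realize exactly $\bU(n+2)$. At level $\ell^T_{n+1}$ itself, I designate a specific branch to carry $c^T_{n+1}$, reflecting the $1$-type prescribed for the new coding vertex. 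To enforce condition~(3), I use the free $3$-amalgamation given by \SFAP\ to arrange the unique immediate successor $(c^T_{n+1})^+$ of $c^T_{n+1}$ in $\widehat{T}$ so that its passing type over $A = T \rl (\ell^T_m - 1)$ is similar to that of $(c^T_m)^+$, where $c^T_m$ is the coding node of $T$ that $c^T_{n+1}$ properly extends.

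Setting $T := \bigcup_n T_n$, perfection is automatic since every node is properly extended by subsequent splittings and coding nodes; the isomorphism $\bK\re T \cong \bK$ then follows from the Extension Property of $\bK$ together with condition~(2), which guarantees that every $1$-type over every finite substructure of $\bK\re T$ is realized above. The main obstacle will be simultaneously maintaining (2) and (3): condition~(3) demands a branch-wise uniformity among the passing types at consecutive coding nodes along any branch, while (2) demands that every $1$-type in $\bU(n+2)$ is represented, exactly once, at level $\ell^T_{n+1}+1$. The decisive tool is \SFAP: its free $3$-amalgamation states that past choices of how a vertex extended an earlier substructure do not constrain the allowable extensions of a newly added vertex beyond the structural requirements already specified, providing exactly the independence needed to carry the recursion through.
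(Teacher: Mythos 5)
Your approach is essentially the same as the paper's: build $\bT$ one level of critical nodes at a time, prescribing the branching so that at the level just past each coding node the branches are in bijection with $\bU(n+1)$ (respecting ancestry), and invoke \SFAP\ to realize the required passing types at each new coding node. The paper does this slightly more explicitly by first fixing a level set $X$ whose meet-closure is diagonal with the correct branch-by-branch multiplicities, extending $X$ up to the level of the next coding node, and only then selecting immediate successors $z_i$ with prescribed passing types over the already-chosen coding vertices; \SFAP\ guarantees these $z_i$ exist. Your ``combinatorial skeleton'' phrasing is a looser version of the same idea.

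Two cautions. First, you conclude with ``the isomorphism $\bK\re T\cong\bK$ then follows from the Extension Property of $\bK$ together with condition (2).'' Here the terminology is dangerous: in this paper, ``Extension Property'' (Definition \ref{defn.ExtProp}) is a condition \emph{about} diagonal coding trees, part of \EEAP$^+$, established separately in Lemma \ref{lem.SFAPEP}; it cannot be invoked when proving the Diagonal Coding Tree Property without circularity. What you actually need (and what condition (2) delivers) is the standard Fra\"{\i}ss\'{e}-theoretic criterion: a substructure of $\bK$ is isomorphic to $\bK$ iff every quantifier-free $1$-type over every finite subset that is realized in $\bK$ is realized in the substructure. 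If you meant that, say so without using the reserved term. Second, ``include in $T$ both of its immediate successors in $\bU$'' is imprecise for relation symbols of arity $\geq 3$, where a node of $\bU$ at level $n$ may have many immediate successors (e.g.\ $2^n$ for the $3$-uniform hypergraph); a splitting node of the diagonal tree has exactly two successors in $T$, but these need not be immediate successors in $\bU$, and which pair to choose matters for realizing all of $\bU(n+2)$ further up. The paper sidesteps this by making the branching decisions first at arbitrary intermediate levels and only pinning down the passing types at the level immediately above each coding node, using \SFAP\ there. Your sketch should be read with that adjustment.
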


\begin{proof}
Suppose $\mathcal{K}$ is a \Fraisse\ class satisfying \SFAP.
Let  $\bK$ be any enumerated \Fraisse\ structure for $\mathcal{K}$, and let $\bS$ be the coding tree of $1$-types over finite initial segments of $\bK$.
Recall that  $c_n$ denotes the $n$-th coding node of $\bS$, that is, the  $1$-type of
the $n$-th vertex of $\bK$ over $\bK_n$.
If there are any  unary relations in the language $\mathcal{L}$ for $\mathcal{K}$, then $\bS(0)$ will have more than one node.
Recall our convention that the ``leftmost'' or $\prec$-least
node in $\bS(n)$  is the $1$-type over $\bK_n$ in which no relations of arity greater than one are satisfied.

We start constructing
a diagonal coding subtree
$\bT$ by letting the minimal level of $\bT$ equal $\bS(0)$.
Take a level set $X$ of $\bS$ satisfying
(a)
for each $t\in \bS(0)$,
the number of nodes in $X$ extending $t$
is the same as the number of nodes in $\bS(1)$ extending  $t$, and
(b)
 the subtree $U_0$ generated by the meet-closure  of $X$ is diagonal.
 We may assume, for convenience, that the $\prec$-order of the  splitting nodes in $U_0$   is the same as the ordering by their lengths.

Let $x_*$ denote the $\prec$-least member of $X$ extending $c_0$.
(If there are no  unary relation symbols in the language, then $x_*$ is the
``leftmost'' or $\prec$-least
node in $X$.)
Let $c^{\bT}_0$ denote the coding node of least length extending
 $x_*$.
Extend the rest of the nodes in $X$  to the length of $c^{\bT}_0$ and call this set of nodes, along with $c^{\bT}_0$,  $Y$;  define $\bT\re |c^{\bT}_0|=Y$.
Then  take one immediate successor in $\bS$  of each member of $Y$ so that  there is  a one-to-one correspondence between the $1$-types in $Y$ over   $\bK\re\{v^T_0\}$,
  where $v_0^{\bT}$ is the vertex in $\bK$ represented by
 $c^{\bT}_0$, and the $1$-types in $\bS(1)$:
Letting $p=|\bS(1)|$,
list the nodes in $\bS(1)$ and $Y$ in $\prec$-increasing order as
$\lgl s_i:i<p\rgl$ and $\lgl y_i:i<p\rgl$, respectively.
Take $z_i$  to be an
 immediate successor of $y_i$ in $\bS$  such that
   $z_i\re (\bK\re\{v_0^{\bT}\})\sim s_i$.
 Such  $z_i$ exist by \SFAP.
 Let $\bT\re (|c^{\bT}_0|+1|)=\{z_i:i<p\}$.
 This constructs $\bT$ up to  length  $|c^{\bT}_0|+1$.

The rest of $\bT$ is constructed similarly:
Suppose  $n\ge 1$ and $\bT$ has been constructed up to the immediate  successors of its $(n-1)$-st coding node, $c^{\bT}_{n-1}$.
Take $W$ to be the set of nodes in $\bT$ of length
 $|c^{\bT}_{n-1}|+1$.
This set $W$ has the same size as $\bS(n)$;
let $\varphi: W\ra \bS(n)$ be the $\prec$-preserving bijection.
Take a level set $X$ of nodes in $\bS$ extending $W$ so that (a) for each $w\in W$, the number of nodes in $X$ extending $w$ is the same as the number of nodes in $\bS(n+1)$ extending $\varphi(w)$, and (b)
the tree $U$ generated by the meet-closure of $X$ is diagonal.
Again, we may assume that the splitting nodes in $U$  increase in length as their $\prec$-order increases.

Note that $X$ and  $\bS(n+1)$ have the same cardinality.
Let $p=|\bS(n+1)|$ and enumerate $X$ in $\prec$-increasing order as $\lgl x_i:i<p\rgl$.
Let  $i_*$ be the index so that
$x_{i_*}$ is the $\prec$-least member of $X$ extending
$\varphi(c_{n})$.
Let $c^{\bT}_{n}$ denote the coding node in $\bS$ of shortest length
extending $x_{i_*}$.
For each  $i\in p\setminus \{i_{*}\}$, take one  $y_i\in \bS$ of length $|c^{\bT}_{n}|$ extending $x_i$.
Let $y_{i_*}=c^{\bT}_{n}$,
$Y=\{y_i:i < p\}$, and
 $\bT\re |c^{\bT}_{n}|=Y$.
Let $\lgl s_i:i<p\rgl$  enumerate the nodes in
$\bS(n+1)$ in   $\prec$-increasing order.
Then for each $i<p$,
let $z_i$ be an immediate successor of $y_i$  in $\bS$ satisfying
\begin{equation}
z_i\re (\bK\re\{v_m^{\bT}:m\le n\})\sim
s_i\re \bK_{n+1},
\end{equation}
where $v_m^{\bT}$ is the vertex of $\bK$ represented by the coding node $c_m^{\bT}$.
Again, such $z_i$ exist by \SFAP.
Let $\bT\re (|c^{\bT}_{n}|+1)=\{z_i:i<p\}$.

In this manner, we construct a subtree $\bT$ of $\bS$.
It is straightforward to check that this construction satisfies  (1) and (2) of Definition \ref{defn.sct}
of diagonal coding tree.
Using  \SFAP,  we may construct $\bT$ so that
 property (3) holds.
As long as the language for $\mathcal{K}$ contains at least one relation symbol, $\bT$ will be  a perfect  tree.
Thus,
any \Fraisse\ limit for
$\mathcal{K}$ satisfies the Diagonal Coding Tree Property.
\end{proof}

Next, we consider  ordered \SFAP\ classes.

\begin{lem}\label{lem.SFAPplusoderimpliesDCT}
Suppose $\mathcal{K}$ is a \Fraisse\ class satisfying \SFAP\
 and let  $\mathcal{K}^{<}$
denote  the \Fraisse\ class
 of  ordered
 expansions of members of
 $\mathcal{K}$.
 Then
 the \Fraisse\ limit $\bK^<$ of
  $\mathcal{K}^{<}$ satisfies the Diagonal Coding Tree Property.
\end{lem}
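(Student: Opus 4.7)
The plan is to adapt the construction of Theorem \ref{thm.SFAPimpliesDCT} to handle the linear order, with the main subtlety being that $\mathcal{K}^<$ does not itself satisfy \SFAP\ (linear orders preclude free amalgamation), so the freedom to choose passing types at new coding nodes must be restricted to the $\mathcal{L}$-component, while the order component is forced by placement in the existing chain.

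Fix an enumerated \Fraisse\ structure $\bK^<$ for $\mathcal{K}^<$, and let $\bK$ denote its $\mathcal{L}$-reduct, which is a \Fraisse\ structure for $\mathcal{K}$. I would build a diagonal coding subtree $\bT\sse\bS(\bK^<)$ by induction on levels, following the template of Theorem \ref{thm.SFAPimpliesDCT}. At the inductive step, having constructed $\bT$ up to the immediate successors of $c^{\bT}_{n-1}$, I would choose a level set $X$ extending the current level whose meet closure is diagonal, with branching matching $\bS(\bK^<)(n)$; take $c^{\bT}_n$ to be the shortest coding node in $\bS(\bK^<)$ extending the $\prec$-least member of $X$ lying above the image of $c_n$. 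Then, when extending the level $|c^{\bT}_n|$ nodes $\{y_i:i<p\}$ to their immediate successors of length $|c^{\bT}_n|+1$ with prescribed passing types $s_i\in\bS(\bK^<)(n+1)$, the order component of each $s_i$ is automatically consistent: as illustrated in Example \ref{ex.Q}, any two nodes at the same level represent vertices lying in distinct intervals of the existing linear order, and the order relation between such a vertex and $v^{\bT}_n$ is already forced by those intervals. Hence only the $\mathcal{L}$-components of the passing types require active choice, and these can be realized simultaneously via \SFAP\ applied in the $\mathcal{L}$-reduct $\bK$.

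Once the construction is complete, properties (1)--(3) of Definition \ref{defn.sct} are verified exactly as in Theorem \ref{thm.SFAPimpliesDCT}: property (1) holds because we realize all $1$-types and their passing types at every stage, property (2) holds by the bijective correspondence of the level-set extensions with $\bS(\bK^<)(n+1)$, and property (3) follows from the uniform application of \SFAP\ along extensions of a common node. The tree $\bT$ is perfect because the language contains at least one non-unary relation symbol (by our standing assumption), so $\bS(\bK^<)$ keeps branching at every level in a way that $\bT$ can inherit.

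The main obstacle is the combinatorial bookkeeping required to match the enumeration of $1$-types in $\bS(\bK^<)(n+1)$ with the extensions of the $y_i$'s so that order components align automatically. The resolution is to set up the level sets $X$ at each stage so that every interval of the current linear order on the coding-node vertices is represented by a node of $X$, which is possible because the order on these vertices is dense (being realized inside a \Fraisse\ structure whose age contains all finite linear orders); hence each $s_i$ matches the unique $y_i$ whose represented interval is consistent with $s_i$'s order component.
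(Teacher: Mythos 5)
There is a genuine gap in your argument, concentrated in the claim that the order component is ``automatically consistent'' because ``any two nodes at the same level represent vertices lying in distinct intervals of the existing linear order.'' That claim is true in the pure-order case of Example~\ref{ex.Q}, which you cite, but it fails for $\mathcal{K}^<$ whenever $\mathcal{L}$ contains any relation symbol besides $<$. Two distinct nodes at the same level of $\bS(\bK^<)$ (or $\bU(\bK^<)$) can agree entirely on their $<$-formulas — hence lie in the same order interval of the vertices enumerated so far — while differing on an $\mathcal{L}$-relation such as an edge. For such a node $y_i$, both $(x<v_{n_*})$ and $(v_{n_*}<x)$ remain consistent extensions, so the order relation of a future vertex realizing $y_i$ to $v^{\bT}_n$ is not forced. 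Your closing ``resolution'' of arranging $X$ so every interval is represented does not repair this: $X$ must have $|\bU^<(n+1)|$-many nodes to satisfy part (2) of Definition~\ref{defn.sct}, and once there is a non-order relation that count exceeds the number of order intervals, so distinct nodes of $X$ are compelled to share intervals.

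The paper's proof handles exactly this point by exploiting the slack between the level of $x_i$ and the level $|c^{\bT^<}_n| = n_*+1$ of the coding node. When $s_i$ dictates, say, $(v_n<x)$, one chooses $y_i$ above $x_i$ so that $y_i$ contains $(v_m<x)$ for some intermediate index $m<n_*$ with $v^{\bT^<}_n < v_m$ holding in $\bK^<$; then every vertex realizing an extension of $y_i$ lies above $v_m$ and hence above $v^{\bT^<}_n$, so the order component is genuinely forced, and only the $\mathcal{L}$-component of $z_i$ remains to be supplied via \SFAP\ on the $\mathcal{L}$-reduct. This deliberate ``snapping'' of each $y_i$'s interval to one side of the new coding vertex — which relies on the density of the linear order to supply a suitable $v_m$ — is the missing step in your construction, and without it the inductive step does not go through.
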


\begin{proof}
Let $\mathcal{L}$ denote the language for $\mathcal{K}$,
and let $\mathcal{L}^*$ be the expansion $\mathcal{L}\cup\{<\}$, the language of $\mathcal{K}^<$.
Let $\bK^{<}$ denote
an enumerated structure for $\mathcal{K}^{<}$,
and let $\bK$ denote the  reduct of  $\bK^{<}$ to $\mathcal{L}$; thus, $\bK$ is an enumerated \Fraisse\ structure for $\mathcal{K}$.
The universes of $\bK$ and $\bK^{<}$ are  $\om$, which  we shall denote as
 $\lgl v_n:n<\om\rgl$.
Let
$\bU$ denote  the coding tree of $1$-types induced by $\bK$, and
$\bU^{<}$ denote  the coding tree of $1$-types induced by $\bK^{<}$.
(As in the case for the $n$-partite graphs, if $\mathcal{K}$ has unary relations  and these unary relations do not occur densely in $\bU$, then work in $\bS$ and $\bS^<$ instead.)
As usual, we let $c_n$ denote the $n$-th coding node in $\bK$, and we will let $c_n^<$ denote the $n$-th coding node in $\bK^<$.
(Normally, if $<$ is in the language of a \Fraisse\ class $\mathcal{K}$, then we will simply write $\bK$ for its enumerated \Fraisse\ structure and $\bU$ for its induced coding tree of $1$-types, but here it will aid the reader to consider the juxtaposition of $\bU$ and $\bU^<$.)
Notice that  $\mathcal{K}^<$ satisfies  \EEAP:
 This holds because
  \SFAP\ implies \EEAP,
 $\mathcal{LO}$ satisfies \EEAP, and
 \EEAP\ is preserved under free superposition.
So it only remains to show that there is a diagonal coding tree for $\bK^<$.

Note that since $\mathcal{L}$ has  at least one non-unary relation symbol and since
$\mathcal{K}$ satisfies \SFAP,
the tree $\bU$  is perfect.
The branching of $\bU$ and $\bU^<$ are related in the following way:
Each node $t\in\bU(0)^<$ has twice as many immediate successors in $\bU(1)^<$
as its reduct to $\mathcal{L}$ has in $\bU(1)$.
In general, for $n\ge 1$,
given a node  $t\in \bU^<(n)$,
let  $s$  denote the collection of formulas in $t$ using only relation  symbols in   $\mathcal{L}$
 and note that  $s\in \bU(n)$.
  The number of immediate successors of $t$ in
  $ \bU^<(n+1)$ is related to the number of immediate successors of $s$ in   $\bU(n+1)$ as follows:
  Let $(*)_n(t)$ denote the following property:
  \begin{enumerate}
  \item[$(*)_n(t)$:]
  \begin{center}
   $\{m<n:(x<v_m)\in t\}=\{m<n:(x<v_m)\in c^<_n\}$
   \end{center}
\end{enumerate}
If $(*)_n(t)$ holds,
then $t$ has twice as many immediate successors in
 $\bU^<(n+1)$ as $s$ has in  $\bU(n)$, owing to the fact that
 each $1$-type in $\bU(n+1)$ extending $s$  can be augmented by either of $(x<v_n)$ or $(v_n<x)$ to form an extension of $t$ in $\bU^<(n+1)$.
 If $(*)_n(t)$ does not  hold,
 then any  vertex  $v_i$, $i>n$, satisfying  $t$
  lies in an interval of the $<$-linearly ordered set $\{v_m:m<n\}$, where neither of  the endpoints  are  $v_n$.
 Thus, the order between $v_i$ and $v_n$ is already  determined by $t$;
 hence $t$ has the same number of immediate successors in
  $\bU^<(n+1)$ as $s$ has in  $\bU(n)$.

A diagonal coding subtree $\bT^<$ of $\bU^<$  can be constructed similarly as in Theorem \ref{thm.SFAPimpliesDCT}
with the following modifications:
Suppose $\bT^<$ has been constructed up to  a
level  set $W$, where either  $n=0$ and $W=\bU^<(0):=\{c^<_0\}$, or else $n\ge 1$ and  $W$ is the set of immediate successors of the $(n-1)$-st coding node  of $\bT^<$.
This set $W$ has the same size as $\bU^<(n)$;
let $\varphi: W\ra \bU^<(n)$ be
the $\prec$-preserving bijection.
As $\mathcal{K}$ is a free
amalgamation
class,
we may assume that for any $s\in\bU^<(n)$,
if $t,u$ in
$\bU^<(n+1)$  are
 immediate successors   of $s$  with
$(x< v_n)\in t$ and $(v_n<x)\in u$,
then $t\prec u$.
Note that the two $\prec$-least extensions of $s$ either both contain $(x<v_n)$, or else both contain
$(v_n<x)$.
Moreover, we may assume that the $\prec$-least immediate successor of $s$
contains negations of all relations in $s$ with $v_n$ as
a parameter.

Take a level set $X$ of nodes in $\bU^<$ extending $W$ so that  the following hold:
(a)
for each $w\in W$, the number of nodes in $X$ extending $w$ is the same as the number of nodes in $\bU^<(n+1)$ extending $\varphi(w)$, and
(b)
the tree $U$ generated by the meet-closure  of $X$ is diagonal,
 where each splitting node in $U$ is extended by
 its  two $\prec$-least immediate successors in $\bU^<$, and
  all non-splitting nodes are extended by the $\prec$-least extension in
  $\bU^<$.
As in Theorem \ref{thm.SFAPimpliesDCT},
 we may assume that the splitting nodes in $U$  increase in length as their $\prec$-order increases, though this has no bearing on the  theorems in the next section.

Let $p:=|\bU^<(n+1)|$ and index the nodes in $\bU^<(n+1)$ in $\prec$-increasing order as $\lgl s_i:i<p\rgl$.
Note that $X$ has $p$-many nodes; index them in $\prec$-increasing order as $\lgl x_i:i<p\rgl$.
Let $x_{i_*}$ denote  the $\prec$-least member of $X$ extending
$\varphi(c^{<}_{n})$, and
extend $x_{i_*}$ to a coding node  in $\bU^<$ satisfying the same $\gamma\in\Gamma$ as $c_n$; label it $y_{i_*}$.
This node $y_{i_*}$ will be the $n$-th coding node, $c^{\bT^<}_{n}$, of  the diagonal coding subtree $\bT^<$ of $\bU^<$ which we are constructing.
For each  $i\in p\setminus \{i_*\}$,
 take one  $y_i\in \bU^<$ of length $|c^{\bT^<}_{n}|$ extending $x_i$
so that  $y_i$ is the
 $\prec$-least  extension of $x_i$, subject to  the following:
Let $n_*$ be the index such that
 $c^{\bT^<}_{n}=c^{<}_{n_*}$.
For $i<p$, if  $(v_{n}<x)$ is in $s_i$,
then we take $y_i$ so that for some
$m<n_*$
such that $v_n<v_m$,
$(v_m<x)$ is in $y_i$.
This has the effect that
if  $(v_{n}<x)$ is in $s_i$,
then any vertex $v_j$ represented by a coding node extending $s_i$ will satisfy $v_m<v_j$;
and since $v_n<v_m$, it will follow that $v_n<v_j$;
hence  $(v_{n_*}<x)$ is automatically in $y_i$.
Likewise, if
 $(x<v_{n})$ is in $s_i$,
then we take $y_i$ so that for some
$m<n_*$
such that $v_m<v_n$,
$(x<v_m)$ is in $y_i$.

Let $Y=\{y_i:i<p\}$ and
define  the  set of nodes  in
$\bT^<$ at the level of $c^{\bT^<}_{n}$ to be $Y$.
For each $i<p$,
let $z_i$ be an immediate successor of $y_i$  in $\bU^<$ satisfying
\begin{equation}
z_i\re (\bK\re\{v_j^{\bT^<}:j\le n\})\sim
s_i\re \bK_{n+1},
\end{equation}
where $v_j^{\bT^<}$ is the vertex
represented by $c_j^{\bT^<}$.
This is possible by \SFAP.
For the linear order, this was taken care of by \EEAP\ and
our choice of $y_i$.
Let $\bT^<\re (|c^{\bT}_{n}|+1)=\{z_i:i<p\}$.

In this manner, we construct a coding subtree $\bT^<$ of $\bU^<$ which is diagonal, representing a substructure of $\bK^<$ which is again isomorphic to
$\bK^<$.
By extending coding nodes in $\bT^<$ by their $\prec$-least extensions in $\bU^<$,
we satisfy
 (3) of  the definition of diagonal coding tree .

 Hence,
 $\bK^<$
satisfies the Diagonal Coding Tree Property.
\end{proof}

We will work in  a diagonal coding subtree of $\bS$ whenever such a subtree exists.
This is always the case for \Fraisse\ classes satisfying SFAP.
For \Fraisse\ limits   with no unary relations satisfying SDAP,
note that  $\bS=\bU$; so in this case, a diagonal coding subtree of $\bU$ is the same as a diagonal coding subtree of $\bS$.
If $\bK$ is a \Fraisse\ class   with unary relations  satisfying SDAP and there is 
 a diagonal coding subtree of $\bU$ but 
no diagonal coding subtree of $\bS$,
then there  are subsets $P_0,\dots, P_j$ of the unary 
 relation symbols of $\mathcal{K}$ 
and a  diagonal coding
subtree $T\sse \bU$ such that  at some level  $\ell$  below the first coding node of $T$,  the following hold:
$T\re\ell$ has exactly $j+1$ nodes, say $t_0,\dots,t_j$, and for each $i\le j$, 
every coding node in the tree $T$ restricted above $t_i$  has unary relation in $P_i$ and moreover, each of the unary relations in $P_i$ occurs densely in  $T$  restricted above $t_i$.
By possibly adding unary relation symbols, we may assume that $P_0,\dots,P_j$ is a partition of the unary relation symbols. 
Thus, without loss of generality, we will hold to the following  convention  for the remainder of this article.

\begin{convention}\label{conv.Gamma_ts}
Let $\mathcal{K}$ be a \Fraisse\ class in a language $\mathcal{L}$ and $\bK$ a \Fraisse\ limit of $\mathcal{K}$.
Either there is a diagonal coding subtree of $\bS(\bK)$,
or else there is a diagonal coding subtree of $\bU(\bK)$ in which all unary relations occur densely. 
 \end{convention}


\subsection{The Extension Property}\label{subsec.EP}

In this section we will define the Extension Property, which is the last  of the requirements for \EEAP$^+$ to hold.

Let $T$ be a diagonal coding tree for the \Fraisse\ limit $\bK$ of some \Fraisse\
class $\mathcal{K}$.
Recall that the tree ordering on $T$ is simply inclusion.
We recapitulate notation from Subsection \ref{subsec.3.1}:
Each $t \in T$ can be thought of as a sequence $\lgl t(i) : i < |t| \rgl$ where $t(i) = (t \re \bK_i) \!\setminus\! (t \re \bK_{i-1})$.
For
 $t\in T$ and $\ell\le |t|$,
$t\re \ell$ denotes
$\bigcup_{i<\ell}t(i)$, which we can
think of
as the sequence $\lgl t(i):i< \ell\rgl$,
the initial segment of $t$ with domain $\ell$.
Note that
$t\re\ell\in \bS(\ell-1)$
(or $t\re \ell\in \bU(\ell-1)$).
(We let $\bS(-1)=\bU(-1)$ denote the set containing the empty set, just so that we do not have to always write $\ell\ge 1$.)

The following extends Notation
\ref{notn.cong<}
to
subsets of trees.
For a  finite subset $A\sse\bT$,  let
\begin{equation}
 \ell_A=\max\{|t|:t\in A\}\mathrm{\ \ and\ \ } \max(A)=
\{s\in A: |s|=\ell_A\}.
 \end{equation}
For $\ell\le \ell_A$,
let
\begin{equation}
A\re \ell=\{t\re \ell : t\in A\mathrm{\ and\ }|t|\ge \ell\}
\end{equation}
 and let
\begin{equation}
A\rl \ell=\{t\in A:|t|< \ell\}\cup A\re \ell.
\end{equation}
Thus, $A\re \ell$ is a level set, while $A\rl \ell$ is the set of nodes in $A$ with length less than $\ell$ along with the truncation
to $\ell$ of the  nodes in $A$ of length at least
 $\ell$.
Notice that
$A\re \ell=\emptyset$ for $\ell>\ell_A$, and
 $A\rl \ell=A$  for  $\ell\ge \ell_A$.
 Given $A,B\sse T$, we say that $B$ is an {\em initial segment} of $A$ if   $B=A\rl \ell$
 for some $\ell$ equal to
   the length of some node in $A$.
   In this case, we also say that
   $A$ {\em end-extends} (or just {\em extends}) $B$.
If $\ell$ is not the length of any node in $A$, then
  $A\rl \ell$ is not a subset  of $A$, but  is  a subset of $\widehat{A}$, where
  $\widehat{A}$ denotes $\{t\re n:t\in A\mathrm{\ and\ } n\le |t|\}$.

Define $\max(A)^+$ to be the set of nodes
$t$ in $T\re (\ell_A+1)$ such that $t$ extends $s$ for some $s \in \max(A)$.
Given a node $t\in T$ at the level of a coding node in $T$, $t$ has exactly one immediate successor in $\widehat{T}$, which  we recall
from Notation \ref{notn.cong<}
is  denoted as
$t^+$.

\begin{defn}[$+$-Similarity]\label{def.plussim}
Let $T$ be a diagonal coding tree for
the \Fraisse\ limit $\bK$ of
a \Fraisse\
class $\mathcal{K}$, and
suppose $A$ and $B$
are finite subtrees of $T$.
We write  $A\plussim B$ and say that
 $A$ and $B$ are
 {\em $+$-similar} if and only if
  $A\sim B$ and
 one of the following two cases holds:
 \begin{enumerate}
 \item[]
   \begin{enumerate}
\item[\bf Case 1.]
 If $\max(A)$  has a splitting node in $T$,
 then so does $\max(B)$,
  and the similarity map from $A$ to $B$  takes the splitting node in $\max(A)$ to the splitting node in $\max(B)$.
    \end{enumerate}
     \end{enumerate}
       \begin{enumerate}
    \item[]
        \begin{enumerate}
  \item[\bf Case 2.]
If $\max(A)$ has a coding node, say
 $c^A_n$,
and  $f:A\ra B$ is  the similarity map,
then
  $s^+(n;A)\sim f(s)^+(n;B)$ for each $s\in \max(A)$.
 \end{enumerate}
    \end{enumerate}

Note that $\plussim$ is an  equivalence relation, and  $A\plussim B$ implies $A\sim B$.
When $A\sim B$ ($A\plussim B$), we say that they have the same {\em similarity type} ({\em $+$-similarity type}).
\end{defn}

\begin{rem}\label{rem.SplussimT}
For infinite trees $S$ and $T$ with no terminal  nodes, $S\sim T$ implies that for
each $n$, letting $d^S_n$ and $d^T_n$ denote the $n$-th critical nodes of $S$ and $T$, respectively,
$S\re|d_n^S|\plussim T\re|d_n^T|$.
\end{rem}

We adopt the following notation  from topological Ramsey space theory (see \cite{TodorcevicBK10}).
Given  $k<\om$,
we define
$r_k(T)$ to be  the
restriction of $T$ to the levels of the first $k$ critical nodes of $T$;
that is,
\begin{equation}
r_k(T)=\bigcup_{m<k}T(m),
\end{equation}
where $T(m)$ denotes the set of all nodes in $T$ with length equal to $|d^T_m|$.
It follows from Remark \ref{rem.SplussimT} that  for any
 $S,T\in \mathcal{T}$,
$r_k(S)\plussim r_k(T)$.
Define $\mathcal{AT}_k$ to be the set of {\em $k$-th approximations}
to members of $\mathcal{T}$;
that is,
\begin{equation}
\mathcal{AT}_k=\{r_k(T):T\in\mathcal{T}\}.
\end{equation}
For $D\in\mathcal{AT}_k$
and $T\in\mathcal{T}$,
define the set
\begin{equation}
[D,T]=\{S\in \mathcal{T}:r_k(S)=D\mathrm{\ and\ } S\le T\}.
\end{equation}
Lastly, given
 $T\in\mathcal{T}$,
 $D=r_k(T)$,  and $n>k$,
  define
\begin{equation}
r_n[D,T]=\{r_n(S):S\in [D,T]\}.
\end{equation}

More generally, given any $A\sse T$, we use $r_k(A)$ to denote the first $k$ levels of the tree induced by the meet-closure of $A$.
We now have the necessary ideas to define the Extension Property.

Recall from Convention \ref{conv.Gamma_ts} that $\bT$ is either a fixed diagonal coding tree in $\bS$ or else is  a fixed diagonal coding tree in $\bU$ such that all unary relations occur densely in $\bT$,
for an enumerated \Fraisse\ limit $\bK$ of
a \Fraisse\ class $\mathcal{K}$.

\begin{defn}[Extension Property]\label{defn.ExtProp}
We say that
$\bK$
has the {\em Extension Property} when
the following condition  (EP) holds:
\begin{enumerate}
\item[(EP)]
Suppose $A$ is a finite or infinite subtree of   some
$T\in\mathcal{T}$.
Let $k$ be given  and suppose
$\max(r_{k+1}(A))$ has a splitting node.
Suppose that $B$ is a $+$-similarity copy of $r_k(A)$ in $T$.
Let  $u$ denote the splitting node in $\max(r_{k+1}(A))$,
and let
 $s$ denote  the node in $\max(B)^+$ which must be extended to a splitting node in order to obtain a $+$-similarity copy of $r_{k+1}(A)$.
If $s^*$ is a splitting node  in $T$  extending $s$,
then  there are extensions of the rest of the nodes in $\max(B)^+$ to the same length as $s^*$ resulting in a $+$-similarity copy
of $r_{k+1}(A)$ which
can be extended to a copy of $A$.
\end{enumerate}
\end{defn}

\begin{rem}
The Extension Property   easily holds  for \Fraisse\ limits of all \Fraisse\ classes satisfying \SFAP, as we show below in Lemma \ref{lem.SFAPEP};
and similarly  for their ordered
expansions.
The same is true for   \Fraisse\ limits of all unrestricted  \Fraisse\ classes and their ordered expansions.
In these cases, all splitting nodes   in $\bT$
allow for the
construction of a $+$-similarity copy of $A$.
The \Fraisse\ structures $\bQ_n$
also trivially have the Extension Property.

\end{rem}

\begin{lem}\label{lem.SFAPEP} 
\SFAP\ implies the Extension Property.
Similarly, the \Fraisse\ limit of any SFAP class with an ordered expansion satisfies the Extension Property. 
\end{lem}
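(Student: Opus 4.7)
The plan is to mimic, at a local level, the construction of the diagonal coding tree in Theorem \ref{thm.SFAPimpliesDCT}. Unwinding the definition, we are given a $+$-similarity copy $B$ of $r_k(A)$ inside $T$, a designated node $s \in \max(B)^+$ together with a splitting node $s^* \supseteq s$ in $T$, and we must (i) extend every other node in $\max(B)^+$ to the level $|s^*|$ so that the resulting level set, together with $s^*$ and its two immediate successors, forms a $+$-similarity copy $B'$ of $r_{k+1}(A)$, and then (ii) continue to extend $B'$ to a full similarity copy of $A$ inside $T$.

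For step (i), I would enumerate $\max(B)^+ \setminus \{s\}$ and apply SFAP one node at a time. Since $A$ already sits inside some coding tree, each node in $\max(r_{k+1}(A))^+$ realizes a passing type over $r_{k+1}(A)$ which is consistent in $\mathcal{K}$. SFAP then guarantees, for each $t \in \max(B)^+ \setminus \{s\}$, an extension $t^* \in T$ of length $|s^*|$ whose passing type over $\bK \re (B \cup \{s^*\})$ is similar to the passing type of the corresponding node of $\max(r_{k+1}(A))^+$ over $r_{k+1}(A)$; the two immediate successors of $s^*$ supply the two branches coming out of the chosen splitting node. Crucially, the existence of such $t^*$ \emph{inside $T$} (not merely inside $\bT$ or $\bU$) follows from $T \sim \bT$ together with the fact that $\bT$ was built, by the argument of Theorem \ref{thm.SFAPimpliesDCT}, so that every possible $\mathcal{K}$-consistent one-vertex extension of the structure represented below a given level actually occurs above that level. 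Step (ii) is then the same SFAP extension argument, iterated through the critical levels of $A$ above $r_{k+1}(A)$: at each critical level, SFAP supplies extensions of the current maximal level set realizing any prescribed, $\mathcal{K}$-consistent family of passing types; transfinite (in fact just countable) induction through the critical levels of $A$ produces the required similarity copy of $A$ in $T$.

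The main obstacle is the ordered case. When $<$ is adjoined to the language via free superposition with $\mathcal{LO}$, extending a node $t$ by a passing type that specifies the order relation with some existing parameter $v_n$ requires $t$ to already lie on the correct side of every earlier cut determined by $v_n$, exactly as in the careful choice of the nodes $y_i$ in the proof of Lemma \ref{lem.SFAPplusoderimpliesDCT}. The remedy is to carry out step (i) in two passes: first pick pre-splitting extensions of each $t \in \max(B)^+$ so that their order relations to the new vertex coded by $s^*$ are already correctly determined by some intermediate parameter, and only then extend those to length $|s^*|$. This is possible because $\bT$ was constructed in Lemma \ref{lem.SFAPplusoderimpliesDCT} to admit precisely such choices, $T \sim \bT$ inherits this flexibility, and because under free superposition the $\mathcal{LO}$-component and the $\mathcal{K}$-component may be handled independently: the order skeleton is arranged as above, after which SFAP for the remaining relation symbols completes step (i), and step (ii) proceeds as in the unordered case.
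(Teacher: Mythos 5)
Your approach is essentially the paper's: use SFAP to extend one vertex at a time so that the new coding node has the prescribed passing type over the configuration built so far, and appeal to condition~(2) of Definition~\ref{defn.sct} (inherited by every $T\in\mathcal{T}$ since $T\sim\bT$) to guarantee that such extensions actually occur inside $T$, not merely in $\bK$. The paper organizes this slightly differently --- it proves a stronger intermediate statement, jumping directly from one coding-node level $r_m(A)$ to the next coding-node level $r_n(A)$, filling in the intervening splitting structure ``easily'' because the trees are perfect, and then invoking SFAP once per non-coding branch to place the next coding node with the correct relationship to the already-chosen one. Your level-by-level iteration is equivalent, though you localize too much work in step~(i): since $s^*$ is a \emph{splitting} node, Case~1 of Definition~\ref{def.plussim} imposes \emph{no} passing-type constraint at that level, so extending $\max(B)^+\setminus\{s\}$ to length $|s^*|$ requires nothing but choosing arbitrary non-branching extensions in $T$; SFAP enters only at the coding-node levels handled in step~(ii). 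Relatedly, in the ordered case you speak of ``order relations to the new vertex coded by $s^*$,'' but a splitting node does not code a vertex; the parameter controlling the branching at $s^*$ is $v_{|s^*|-1}$, an already-present vertex, and the order relations that actually need to be staged in advance (mimicking the choice of the nodes $y_i$ in Lemma~\ref{lem.SFAPplusoderimpliesDCT}) are those to the \emph{future coding vertices} introduced at later coding-node levels of $A$, not those at the splitting level $|s^*|$ itself. This is a terminological slip rather than a break in the argument, but it muddies where the real content lies; once corrected, the two-pass remedy you describe does match the mechanism used in the paper's construction of $\bT^<$.
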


\begin{proof}
We will actually prove a slightly stronger statement which implies
the Extension Property.
Let $A$ be a subtree of some $T\in\mathcal{T}$.
Without loss of generality,  we may assume that either $A$ is infinite  and has infinitely many coding nodes, or else $A$ is finite and the  node in $A$ of maximal length is a coding node.
Let $m$ either be $0$,
 or else  let $m$ be a positive integer such that $\max(r_m(A))$ has a coding node.
 Let $n>m$ be least above $m$ such that $\max(r_n(A))$ has a coding node; let $c^A_i$ denote this coding node.

 Now suppose that $B$ is a $+$-similarity copy of $r_m(A)$, and
suppose  $C$ is an extension of $B$ in $T$ such that $C$ is $+$-similar to $r_{n-1}(A)$.
(Such a $C$ is easy to construct since $\bS$ is a perfect tree whenever
$\bK$ has at least one non-trivial relation of arity greater than one.)
Let $X$ denote $\max(r_{n-1}(A))^+$,
let $Y$ denote $\max(C)^+$, and let
$\varphi$ be the $+$-similarity map from $X$ to $Y$.
Let $t$ denote the node in $X$ which extends to the coding node in $\max(r_n(A))$, and let
$y$  denote  $\varphi(t)$.
Extend $y$ to some coding node  $c^T_{i'}$ in $T$ such that the substructure  of $\bK$
represented by the coding nodes in $B$ along with $c^T_{i'}$ is isomorphic to the substructure of $\bK$ represented by the coding nodes in $r_n(A)$.

Fix any $u\in X$ such that $u\ne t$, and
let $z$ denote $\varphi(u)$.
Let $c^T_j$ denote the least coding node in $A$ extending $u$.
By \SFAP, there
is an extension  of $z$ to some coding node $c^T_{j'}$ representing a vertex $w'$ in $\bK$ such that
the substructure of $\bK$ represented by the coding nodes in $B$ along with $c^T_{i'}$ and $c^T_{j'}$
is isomorphic to  the substructure of $\bK$ represented by the coding nodes in $r_n(A)$ along with $c^T_j$.
Let $u'$ denote the unique extension of $u$ in $\max(r_n(A))$, and let $z'$ denote the
 truncation of $c^T_{j'}$ to the length $|c^T_{i'}|+1$.
 Then $(z')^+(c^T_{j'};B)\sim (u')^+(c^T_i;r_m(A))$.
  Therefore, the union of $C$ along with
 $\{u':u\in Y\setminus\{y\}\}\cup \{c^T_{i'}\}$ is $+$-similar to $r_n(A)$.
 It follows that the Extension Property holds.
 
 The proof for the  ordered expansion of an SFAP class is similar. 
\end{proof}


\subsection{Substructure Disjoint Amalgamation Property$^+$}\label{subsec.SDAPplus}

We now have all the components needed to define the strengthened version of 
Substructure Disjoint Amalgamation Property central to our results.

\begin{defn}[\EEAP$^+$]\label{defn_EEAP_newplus}
A \Fraisse\
structure $\bK$
has the
{\em  \EEAPnonacronym$^+$ (\EEAP$^+$)} if
its age
$\mathcal{K}$ satisfies \EEAP,
and $\bK$ has
the Diagonal Coding Tree Property and the Extension Property.
\end{defn}

We point out  that while the Diagonal Coding Tree Property and Extension Property are defined in terms of an enumerated \Fraisse\ structure, they are independent of the chosen enumeration, and hence \EEAP$^+$ is a property of a \Fraisse\ structure itself.

By previous lemmas, it follows that   SFAP implies SDAP$^+$.

\begin{thm}\label{thm.SFAPSDAPplus}
Let $\mathcal{K}$ be a \Fraisse\ class in a language with finitely many relation symbols of any finite arity satisfying SFAP.
Then the \Fraisse\ limit $\bK$ of $\mathcal{K}$ and the \Fraisse\ limit $\bK^{<}$ of the ordered expansion $\mathcal{K}^{<}$ both satisfy SDAP$^+$.
\end{thm}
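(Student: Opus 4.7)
The plan is to observe that every component of \EEAP$^+$ has essentially already been established in the preceding lemmas of Section~\ref{sec.sct}, so the proof is largely an assembly argument. Recall that by Definition~\ref{defn_EEAP_newplus}, verifying \EEAP$^+$ for a \Fraisse\ limit requires three things: (i) its age satisfies \EEAP; (ii) it has the Diagonal Coding Tree Property; and (iii) it has the Extension Property. I will verify each of these, first for $\bK$ and then for $\bK^{<}$.

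For $\bK$, condition (i) is immediate from Remark~\ref{rem.freesup}, which records that \SFAP\ implies \EEAP. Condition (ii) is exactly the content of Theorem~\ref{thm.SFAPimpliesDCT}, which constructs a perfect diagonal coding subtree of $\bS(\bK)$ (or $\bU(\bK)$ subject to Convention~\ref{conv.Gamma_ts}) directly from \SFAP. Condition (iii) is Lemma~\ref{lem.SFAPEP}, which shows that under \SFAP, given any $+$-similarity copy $B$ of an initial segment and any splitting extension $s^{*}$ in $T$ of the designated node $s$, one can extend the remaining nodes of $\max(B)^{+}$ to produce a $+$-similarity copy of the next level which extends to a copy of $A$. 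Thus all three conditions hold for $\bK$.

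For the ordered expansion $\bK^{<}$, the key preliminary observation is that $\mathcal{K}^{<}$ is the free superposition of $\mathcal{K}$ with the class $\mathcal{LO}$ of finite linear orders, viewed over disjoint languages $\mathcal{L}$ and $\{<\}$. Since $\mathcal{LO}$ trivially has disjoint amalgamation and satisfies \EEAP\ (as illustrated in the Example following Definition~\ref{defn.EEAP_new}), and since \EEAP\ is preserved under free superposition by Remark~\ref{rem.freesup}, it follows that $\mathcal{K}^{<}$ satisfies \EEAP, verifying condition (i). Condition (ii) for $\bK^{<}$ is Lemma~\ref{lem.SFAPplusoderimpliesDCT}, which constructs a perfect diagonal coding subtree $\bT^{<}$ of $\bU^{<}$ (or $\bS^{<}$) by alternating the SFAP-driven construction of Theorem~\ref{thm.SFAPimpliesDCT} with the additional branching management needed to respect the linear order, using the property $(*)_n(t)$ to control when nodes split in $\bU^{<}$. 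Finally, condition (iii) is covered by the second sentence of Lemma~\ref{lem.SFAPEP}, which states that the Extension Property also holds for the \Fraisse\ limit of any SFAP class with an ordered expansion.

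The main conceptual step, and the only one that is not completely automatic from a single prior result, is the verification of SDAP for $\mathcal{K}^{<}$, where one must recognize the ordered expansion as a free superposition in order to invoke the preservation statement in Remark~\ref{rem.freesup}; everything else is a direct citation. No additional obstacle is anticipated, as the arguments in Theorem~\ref{thm.SFAPimpliesDCT}, Lemma~\ref{lem.SFAPplusoderimpliesDCT}, and Lemma~\ref{lem.SFAPEP} have already absorbed the combinatorial work (construction of diagonal witnesses and extension arguments via \SFAP). Therefore the proof will consist of a short paragraph citing these four results in sequence for each of $\bK$ and $\bK^{<}$.
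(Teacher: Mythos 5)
Your proposal is correct and takes essentially the same route as the paper's own (very short) proof: observe that SFAP gives SDAP for $\mathcal{K}$ and $\mathcal{K}^{<}$, then cite Theorem~\ref{thm.SFAPimpliesDCT}, Lemma~\ref{lem.SFAPplusoderimpliesDCT}, and Lemma~\ref{lem.SFAPEP} for the Diagonal Coding Tree Property and Extension Property. The only slight difference is that you spell out the free-superposition argument for why $\mathcal{K}^{<}$ satisfies SDAP (an argument that actually does appear inside the paper's proof of Lemma~\ref{lem.SFAPplusoderimpliesDCT}), where the paper's proof of the theorem itself simply says it is immediate from the definitions.
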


\begin{proof}
It follows immediately from the definitions 
that if $\mathcal{K}$ satisfies SFAP, then both $\mathcal{K}$ and $\mathcal{K}^{<}$ satisfy SDAP.
By Lemmas \ref{thm.SFAPimpliesDCT},
\ref{lem.SFAPplusoderimpliesDCT}, and 
\ref{lem.SFAPEP},
their \Fraisse\ limits satisfy SDAP$^+$.
\end{proof}

The motivation behind    \EEAP$^+$
was  to distill
the essence of those \Fraisse\ classes for which
the forcing arguments in
Theorem \ref{thm.matrixHL}
 work.
As such, it yields quick proofs via forcing of  indivisibility (Part I) as well as efficient proofs of 
big Ramsey degrees which have simple characterizations, similar to those of the rationals and the Rado graph (Part II).
It is known
that \EEAP$^+$, and even \EEAP,
are not necessary for obtaining  finite  big Ramsey degrees.
For instance, generic  $k$-clique-free graphs    \cite{DobrinenH_k19} and the generic partial order \cite{Hubicka_CS20} have been shown to have  finite big Ramsey degrees,
and their ages do not have \EEAP.

We now present a 
coding tree version of \EEAP$^+$.
This version
is implied by
Definition \ref{defn_EEAP_newplus} and  will be used in  proofs.

\begin{defn}[\EEAP$^+$, Coding Tree Version]\label{def.EEAPCodingTree}
A \Fraisse\ class $\mathcal{K}$ satisfies the
 {\em
Coding Tree Version of  \EEAP$^+$}
 if and only if   $\mathcal{K}$ satisfies
 the disjoint amalgamation property
 and,
 letting $\bK$ be any enumerated \Fraisse\ limit of $\mathcal{K}$, $\bK$ satisfies
 the Diagonal Coding Tree Property,
the Extension Property, and
the following
condition:

Let $T$ be any  diagonal coding  subtree   of $\bU(\bK)$  (or of $\bS(\bK)$), and let $\ell<\om$ be given.
Let
$i,j$ be any distinct integers such that
$\ell<\min(|c^T_i|,|c^T_j|)$,
and let
$\bfC$ denote  the substructure of $\bK$  represented by the coding nodes in $T\rl \ell$ along with
 $\{c^T_i,c^T_j\}$.
Then there are $m \ge\ell$
and
$s',t'\in T\re m$ such that
$s'\contains s$ and $t'\contains t$
 and,
assuming (1) and (2), the conclusion holds:
 \begin{enumerate}
\item[(1)]
Suppose $n\ge m$ and
$s'',t''\in T\re n$
 with $s''\contains s'$ and
$t''\contains t'$.

\item[(2)]
Suppose $c^T_{i'}\in T$ is any coding node extending $s''$.
\end{enumerate}
Then
there is a coding node $c^T_{j'}\in T$, with $j'>i'$,
such that
$c_{j'}\contains t''$ and
the substructure  of $\bK$  represented by the coding nodes in $T\rl \ell$ along with $\{c^T_{i'},c^T_{j'}\}$ is isomorphic to
$\bfC$.
\end{defn}


\subsection{LSDAP$^+$}\label{subsect.LSDAP}

We now present the Labeled Substructure Disjoint Amalgamation Property$^+$ which is applicable to structures such as $\bQ_{\bQ}$, the \Fraisse\ limit of the \Fraisse\ class in language $\mathcal{L}=\{<,E\}$ of  equivalence relations where  each equivalence class is convex. 
It should be thought of as a weakening of SDAP$^+$, for 
if we were to allow $q=1$ in the following definitions, 
SDAP$^+$ would be recovered.

\begin{defn}[Labeled Diagonal Coding Tree]\label{defn.LDCT}
We say that a diagonal coding tree $T$ is {\em labeled} if the following hold:
There
is some $2\le q<\om$,
and a function
 $\psi$ defined on the set of splitting nodes in $\bT$ and  having range $q$,
 such that  the following holds:
 \begin{enumerate}
 \item[(a)]
 If $s\sse t$ are splitting nodes in $T$, then $\psi(s)\ge \psi(t)$.
 \item[(b)]
 For each splitting node $s\in T$ and each $n>|s|$, there is a splitting node $t\contains s$ with $|t|\ge n$ such that $\psi(t)=\psi(s)$.
\item[(c)]
The language for $\bK$ has relation symbols of arity at most two, and
 each $m<q$  
 corresponds  to a pair of partial $1$-types $(\sigma_m,\tau_m)$ involving only binary relation symbols over a $1$-element structure
 such that
whenever $s$ is a splitting node in $\bT$,
$\psi(s)=m$ if and only if the following hold:
whenever  $c^{\bT}_j,c^{\bT}_k$ are coding nodes in $\bT$ with $c^{\bT}_j\wedge c^{\bT}_k=s$ and $c^{\bT}_j\prec c^{\bT}_k$,
if $j<k$ then $c^{\bT}_k(|c^{\bT}_j|)\sim\tau_m$ and if $j>k$ then 
$c^{\bT}_j(|c^{\bT}_k|)\sim\sigma_m$.
\item[(d)]
The maximal splitting node $s$ below a coding node in $\bT$ has $\psi(s)=0$.
\end{enumerate}
\end{defn}

Given (a) and (b), the function $\psi$ can be extended to all nodes of $T$ as follows:
For each non-splitting node $t\in T$, define $\psi(t)$ to equal $\psi(s)$, where $s$ is the maximal splitting node in $T$ such that $s\sse t$.

\begin{notation}
For a labeled diagonal coding tree $\bT$, for $S,T$ subtrees of $\bT$,
write $S\Lsim T$ to mean that $S\sim T$ and the similarity map $f:S\ra T$ preserves $\psi$,
meaning that for each $s\in S$,
$\psi(s)=\psi(f(s))$.
\end{notation}

\begin{defn}[${L+}$-Similarity]\label{def.Lplussim}
Let $T$ be a labeled diagonal coding tree with labeling function $\psi$ for
the \Fraisse\ limit $\bK$ of
a \Fraisse\
class $\mathcal{K}$, and
suppose $A$ and $B$
are finite subtrees of $T$.
We write  $A\Lplussim B$ and say that
 $A$ and $B$ are
 {\em $\mathrm{L}+$-similar} if and only if
  $A\plussim B$ and 
  $A\Lsim B$.
\end{defn}

\begin{defn}[Labeled Extension Property]\label{LEP}
We say that $\bK$ has the {\em Labeled Extension Property} when the following condition (LEP) holds:
\begin{enumerate}
\item[(LEP)]
There
is some $2\le q<\om$
and a labeling function
 $\psi$ taking $\bT$ onto $q$ satisfying Definition \ref{defn.LDCT}
 such that  the following holds:
Suppose $A$ is a finite or infinite subtree of    some $T\in\mathcal{T}$.
Let $k$ be given  and suppose
$\max(r_{k+1}(A))$ has a splitting node.
Suppose that $B$ is an $\mathrm{L}+$-similarity copy of $r_k(A)$ in $T$.
Let  $u$ denote the splitting node in $\max(r_{k+1}(A))$,
and let
 $s$ denote  the node in $\max(B)^+$ which must be extended to a splitting node in order to obtain a $+$-similarity copy of $r_{k+1}(A)$, and note that $\psi(s)\ge \psi(u)$.
 Then  for each  $s'\contains s$ in $T$ with $\psi(s')\ge \psi(u)$,
 there exists  a splitting node $s^*\in T$ extending $s'$ such that $\psi(s^*)=\psi(u)$.
 Moreover,
 given such an $s^*$,
 there are extensions of the rest of the nodes in $\max(B)^+$ to the same length as $s^*$ resulting in an $\mathrm{L}+$-similarity copy of $r_{k+1}(A)$.
\end{enumerate}
\end{defn}


\begin{defn}[LSDAP$^+$]\label{defn_LSDAP_plus}
A \Fraisse\
structure $\bK$
has the
{\em  Labeled Substructure Disjoint Amalgamation Property$^+$ (LSDAP$^+$)} if
its age
$\mathcal{K}$ satisfies SDAP,
and $\bK$ has a labeled diagonal coding tree satisfying 
the Diagonal Coding Tree Property and the Labeled Extension Property.
\end{defn}

\begin{defn}[The space of diagonal coding trees for LSDAP$^+$ structures]\label{spacectLSDAP}
If $\bK$ satisfies LSDAP$^+$, then given a diagonal coding tree $\bT$ for $\bK$ with labeling $\psi$,
we let $\mathcal{T}$ denote the set of all subtrees $T$ of $\bT$ such that $T\Lsim \bT$.
\end{defn}


\section{Indivisibility via forcing the Level Set Ramsey Theorem}\label{sec.FRT}

In  Theorem \ref{thm.matrixHL},
 we  use the technique of forcing to  essentially conduct an unbounded search for a finite object,
 achieving within ZFC
one color per level set extension of a given finite
tree.
It is important to
note
that we never actually go to a generic extension.
In fact,  the forced generic object  is very much {\em not} a coding tree and will not represent a \Fraisse\ limit.
Rather, we use the forcing to
do two things:
(1)
Find a good set of nodes from which we can start to build a subtree which can have the  desired homogeneity properties; and
(2)
Use the forcing to guarantee
 the existence of  a finite object with certain properties.
 Once found, this object, being finite, must exist in the ground model.

We take here a sort of amalgamation of techniques  developed in  \cite{DobrinenJML20}, \cite{DobrinenH_k19}, and \cite{DobrinenRado19}, making adjustments as necessary.
The main differences from previous work are the following:
The forcing poset is on diagonal coding  trees of
 $1$-types; as such,
  we work
with the  general notion of  passing type, in place of  passing number used in the papers \cite{DobrinenRado19}, \cite{DobrinenH_k19}, \cite{DobrinenJML20}, and \cite{Zucker20} for  binary relational structures.
Moreover, Definitions \ref{def.plussim} and \ref{def.Lplussim} present stronger requirements  than just similarity.
These
address
both the fact that relations can be of any arity,
and the fact that we consider \Fraisse\  classes
which
have disjoint, but not necessarily free, amalgamation.

We now set up notation, definitions,  and assumptions for
Theorem \ref{thm.matrixHL}.
Recall Convention \ref{conv.Gamma_ts}.




By an {\em antichain} of coding nodes, we mean a set of  coding nodes  which is pairwise  incomparable with respect to  the tree partial order of inclusion.
\vskip.1in

\noindent\bf{Set-up for Theorem \ref{thm.matrixHL}.} \rm
Let $T$ be a diagonal coding tree  in $\mathcal{T}$.
Fix a finite antichain  of coding nodes  $\tilde{C}\sse T$.
We abuse notation and also write  $\tilde{C}$ to denote  the tree that its meet-closure induces in $T$.
Let $\tilde{A}$ be a fixed proper initial segment of
$\tilde{C}$, allowing for $\tilde{A}$ to be the empty set.
Thus, $\tilde{A}= \tilde{C}\rl \ell$, where $\ell$
is the length of some splitting or  coding node in
$\tilde{C}$
(let $\ell=0$ if $\tilde{A}$ is empty).
Let $\ell_{\tilde{A}}$ denote this $\ell$, and note that
any non-empty
 $\max(\tilde{A})$ either  has a coding node or a splitting node.
Let $\tilde{x}$ denote the shortest splitting or coding node in $\tilde{C}$ with length greater than $\ell_{\tilde{A}}$,
and define  $\tilde{X}=\tilde{C}\re |\tilde{x}|$.
Then $\tilde{A}\cup\tilde{X}$ is an initial segment of
 $\tilde{C}$; let $\ell_{\tilde{X}}$ denote $|\tilde{x}|$.
There are two cases:

\begin{enumerate}
\item[]
\begin{enumerate}
\item[\bf Case (a).]
$\tilde{X}$ has a splitting node.
\end{enumerate}
\end{enumerate}

\begin{enumerate}
\item[]
\begin{enumerate}
\item[\bf Case (b).]
$\tilde{X}$ has a coding node.
\end{enumerate}
\end{enumerate}

Let $d+1$ be the number of nodes in $\tilde{X}$ and  index these nodes as $\tilde{x}_i$, $i\le d$,
where $\tilde{x}_d$ denotes  the critical  node (recall that {\em critical node}  refers to  a splitting or  coding node).
Let
\begin{equation}\label{eq.tildeB}
\tilde{B}=\tilde{C} \re (\ell_{\tilde{A}}+1).
\end{equation}
Then
  $\tilde{X}$ is a level set   equal to or  end-extending  the level set $\tilde{B}$.
  For each $i\le d$, define
  \begin{equation}
  \tilde{b}_i=\tilde{x}_i\re \ell_{\tilde{B}}.
\end{equation}
Note that we
consider nodes in  $\tilde{B}$  as simply nodes to be extended; it
does not matter
whether the nodes in $\tilde{B}$ are coding, splitting, or neither in $T$.

\begin{defn}[Weak similarity]\label{defn.weaksim}
Given finite subtrees $S,T\in\mathcal{T}$ in which each coding node is terminal,
we say that  $S$ is {\em weakly similar} to $T$, and write $S\wsim T$,
if and only if
$S\setminus \max(S)\plussim T\setminus\max(T)$.
We say that  $S$ is {\em L-weakly similar} to $T$, and write $S\Lwsim T$,
if and only if
$S\setminus \max(S)\Lplussim T\setminus\max(T)$.
\end{defn}

In the following, we put the technicalities for the LSDAP$^+$ case in parentheses.

\begin{defn}[$\Ext_T(B;\tilde{X})$]\label{defn.ExtBX}
Let $T\in\mathcal{T}$ be  fixed  and let $D=r_n(T)$ for some $n<\om$.
Suppose $A$ is a subtree of $D$ such that
$A \plussim \tilde{A}$ ($A \Lplussim \tilde{A}$)
and
 $A$ is extendible to a similarity ($L$-similarity) copy of $\tilde{C}$ in $T$.
Let $B$ be a subset of  the level set $\max(D)^+$ such that $B$ end-extends  or equals
$\max(A)^+$
and $A\cup B\wsim\tilde{A}\cup\tilde{B}$
($A\cup B\Lwsim\tilde{A}\cup\tilde{B}$).
Let $X^*$ be a level set end-extending $B$ such that $A\cup X^*\plussim \tilde{A}\cup\tilde{X}$ 
($A\cup X^*\Lplussim \tilde{A}\cup\tilde{X}$).  
Let $U^*=T\rl (\ell_B-1)$.
Define
$\Ext_T(B;X^*)$ to be the collection of all level sets $X\sse T$
such that
\begin{enumerate}
\item
$X$ end-extends $B$;
\item
$U^*\cup X\plussim U^*\cup X^*$ ($U^*\cup X\Lplussim U^*\cup X^*$);
\item
$A\cup X$ extends to a copy of $\tilde{C}$.
\end{enumerate}
\end{defn}

For Case (b), condition (3) follows from (2).
For Case (a),
the Extension Property (Labeled Extension Property)
guarantees that
for any level set  $Y$ end-extending $B$,
there is a level set $X$ end-extending $Y$ such that $A\cup X$ satisfies
condition (3).
In both cases, condition (2) implies that $A\cup X\plussim \tilde{A}\cup\tilde{X}$ ($A\cup X\Lplussim \tilde{A}\cup\tilde{X}$).

The following theorem of \Erdos\  and Rado will  provide the pigeonhole principle for the forcing proof.

\begin{thm}[\Erdos-Rado, \cite{Erdos/Rado56}]\label{thm.ER}
For $r<\om$ and $\mu$ an infinite cardinal,
$$
\beth_r(\mu)^+\ra(\mu^+)_{\mu}^{r+1}.
$$
\end{thm}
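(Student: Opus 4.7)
The plan is to proceed by induction on $r$. For the base case $r=0$, the statement $\mu^+\ra(\mu^+)^1_\mu$ is just the pigeonhole principle: any partition of $\mu^+$ into $\mu$ classes has a class of size $\mu^+$, since $\mu^+$ is regular and strictly larger than $\mu$.

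For the inductive step, assume $\beth_{r-1}(\mu)^+\ra(\mu^+)^r_\mu$, and let $\kappa=\beth_r(\mu)^+$. Given $c\colon[\kappa]^{r+1}\to\mu$, I would first extract a \emph{pre-homogeneous} subset $A\sse\kappa$ of cardinality $\beth_{r-1}(\mu)^+$ on which, for every $s\in[A]^r$, the color $c(s\cup\{\xi\})$ depends only on $s$ and not on the choice of $\xi\in A$ with $\xi>\max(s)$. Once such an $A$ is in hand, the reduced coloring $c^*\colon[A]^r\to\mu$ given by $c^*(s)=c(s\cup\{\xi\})$ for any $\xi\in A$ above $\max(s)$ is well-defined, and by the inductive hypothesis applied to $c^*$ (valid since $|A|=\beth_{r-1}(\mu)^+$) there is a $c^*$-monochromatic $H\sse A$ of size $\mu^+$. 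By construction of $c^*$, this $H$ is automatically $c$-monochromatic, completing the step.

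The main obstacle is the construction of $A$. My approach is a transfinite recursion of length $\beth_{r-1}(\mu)^+$ guided by elementary submodels. Build a continuous increasing $\in$-chain $\langle M_\alpha:\alpha<\beth_{r-1}(\mu)^+\rangle$ of elementary submodels of $H(\kappa^+)$, each of cardinality $\beth_{r-1}(\mu)$ and containing $c$, with $\delta_\alpha:=\sup(M_\alpha\cap\kappa)\in\kappa$. Each $\xi\in\kappa\setminus\delta_\alpha$ determines a ``type'' $\tau_\xi\colon[M_\alpha\cap\kappa]^r\to\mu$ by $\tau_\xi(s)=c(s\cup\{\xi\})$; there are at most $\mu^{\beth_{r-1}(\mu)}=\beth_r(\mu)$ possible types, but $\kappa=\beth_r(\mu)^+$ many candidates $\xi>\delta_\alpha$, so pigeonhole forces some type $\tau^{(\alpha)}$ to be realized by cofinally many such $\xi$, and by elementarity $\tau^{(\alpha)}$ lies in $M_{\alpha+1}$. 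Choose $x_\alpha$ to be any realization of $\tau^{(\alpha)}$ strictly below $\delta_{\alpha+1}$, and set $A=\{x_\alpha:\alpha<\beth_{r-1}(\mu)^+\}$.

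The hard part will be the coherence bookkeeping: one must verify that for $\alpha<\beta$, the type that $x_\beta$ realizes over $M_\alpha\cap\kappa$ agrees with $\tau^{(\alpha)}$, so that the pre-homogeneity clause is actually enforced on every $s\in[A]^r$ regardless of which $\xi\in A$ above $s$ one picks. This coherence is exactly what the nested elementary-chain structure encodes — each $x_\beta$ lives outside $M_\alpha$ but realizes a type cofinally reflected inside $M_\alpha$, so the color $c(s\cup\{x_\beta\})$ is the value $\tau^{(\alpha)}(s)$ determined at stage $\alpha$ — and this is enough to make $A$ pre-homogeneous and finish the proof via the reduction in the second paragraph.
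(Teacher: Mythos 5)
The paper does not prove this result; it cites the Erd\H{o}s--Rado theorem from \cite{Erdos/Rado56} and uses it only as a pigeonhole principle. So there is no ``paper proof'' to compare against; what follows is an assessment of your argument on its own terms.

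Your high-level plan is the standard one and is correct: induct on $r$, with $r=0$ handled by pigeonhole via regularity of $\mu^+$, and for the step reduce to extracting a pre-homogeneous (end-homogeneous) set $A$ of size $\beth_{r-1}(\mu)^+$ and applying the inductive hypothesis to the induced coloring on $[A]^r$. The gap is in the construction of $A$, and it is exactly the step you flag as ``the hard part'' and then too quickly dismiss. The coherence you need --- that for $\alpha<\beta$, $\tau^{(\beta)}\re[M_\alpha\cap\kappa]^r=\tau^{(\alpha)}$ --- is \emph{not} encoded by the nesting of the $M_\alpha$'s. Each $\tau^{(\alpha)}$ is chosen at stage $\alpha$ as \emph{some} type realized by cofinally many $\xi$, but nothing ties the choice at stage $\beta$ to the choice at stage $\alpha$: several distinct types over $M_\alpha\cap\kappa$ may each be realized cofinally, and $\tau^{(\beta)}\re[M_\alpha\cap\kappa]^r$ can be any one of them. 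Without coherence, $c(s\cup\{x_\beta\})=\tau^{(\beta)}(s)$ does depend on $\beta$, and $A$ is not pre-homogeneous.

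There is also a size obstruction that blocks the natural repair. The obvious fix is to fix a single $\xi^*\in\kappa\setminus\bigcup_\alpha M_\alpha$ in advance and define each $\tau^{(\alpha)}$ to be the $c$-type of $\xi^*$ over $M_\alpha\cap\kappa$, which is automatically coherent. But then $\tau^{(\alpha)}$ is a function on a domain of size $\beth_{r-1}(\mu)$ with values in $\mu$; there are $\mu^{\beth_{r-1}(\mu)}=\beth_r(\mu)$ such functions, while $|M_{\alpha+1}|=\beth_{r-1}(\mu)$, so $M_{\alpha+1}$ generically does not contain $\tau^{(\alpha)}$, and you cannot use elementarity of $M_{\alpha+1}$ to find a realization $x_\alpha<\delta_{\alpha+1}$. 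The chain of small models is structurally the wrong scaffolding. The usual elementary-submodel proof takes a \emph{single} model $M\prec H(\theta)$ with $c\in M$, $|M|=\beth_r(\mu)$, and $M$ closed under $\beth_{r-1}(\mu)$-length sequences (possible since $\beth_r(\mu)^{\beth_{r-1}(\mu)}=\beth_r(\mu)$); then for any $\xi^*\in\kappa\setminus M$ and any $X\sse M\cap\kappa$ of size $\le\beth_{r-1}(\mu)$, the $c$-type of $\xi^*$ over $X$ \emph{is} an element of $M$, and elementarity furnishes a realization inside $M$ above any prescribed bound. One then builds the pre-homogeneous set $\langle x_\alpha:\alpha<\beth_{r-1}(\mu)^+\rangle$ by a straightforward recursion inside $M$, tied throughout to the one fixed $\xi^*$. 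Alternatively, the original ramification (tree) argument of Erd\H{o}s and Rado accomplishes the same thing without elementary submodels: build a tree of height $\beth_{r-1}(\mu)^++1$ whose nodes split into at most $\beth_r(\mu)$ pieces according to the induced type; since $\kappa>\beth_r(\mu)\cdot\beth_{r-1}(\mu)^+$, some branch has full height and is pre-homogeneous. Either route closes the gap; as written, your proposal does not.
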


We are now ready to prove the Ramsey theorem for level set extensions of a given finite tree.

\begin{thm}[Level Set Ramsey Theorem]\label{thm.matrixHL}
Suppose that
$\mathcal{K}$
has \Fraisse\ limit $\bK$ satisfying
\EEAP$^+$ (or LSDAP$^+$), and
$T\in\mathcal{T}$ is given.
Let $\tilde{C}$ be a finite antichain of coding nodes in $T$,   $\tilde{A}$ be an initial segment of $\tilde{C}$,
and $\tilde{B}$ and $\tilde{X}$ be defined as above.
Suppose
 $D=r_n(T)$ for some $n<\om$,
 and
 $A\sse D$
and $B\sse \max(D^+)$
satisfy
$A\cup B\wsim \tilde{A}\cup\tilde{B}$  ($A\cup B\Lwsim\tilde{A}\cup\tilde{B}$).
Let  $X^*$ be a level set end-extending $B$ such that $A\cup X^*\plussim \tilde{A}\cup\tilde{X}$ ($A\cup X^*\Lplussim \tilde{A}\cup\tilde{X}$).
Then given any coloring
  $h:  \Ext_T(B;X^*)\ra 2$,
  there is a coding tree $S\in [D,T]$ such that
$h$ is monochromatic on $\Ext_S(B;X^*)$.
\end{thm}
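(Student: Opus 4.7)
The plan is to establish Theorem \ref{thm.matrixHL} by a ZFC forcing argument that remains entirely in the ground model, adapting the matrix-forcing techniques of \cite{DobrinenJML20}, \cite{DobrinenH_k19}, and \cite{DobrinenRado19} to the coding-trees-of-$1$-types setting. I would first define a forcing poset $\bP$ whose conditions are finite level sets in $T$ that properly end-extend $B$ and whose weak similarity type over $U^*$ matches that of $A\cup B$; conditions are indexed by finite subsets of some sufficiently large cardinal $\kappa$, with each index $\alpha$ assigned to one of the $d+1$ slots of $\tilde{X}$, so that a condition provides $\kappa$-many ``parallel copies'' of potential extensions of each $\tilde b_i$. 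The Diagonal Coding Tree Property together with the (Labeled) Extension Property ensures that $\bP$ has conditions at arbitrarily high levels for each fixed similarity-preserving slot assignment, and in particular is nonempty below any initial tuple.

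Next I would introduce $\bP$-names $\dot x_\alpha$ for the $\alpha$-th generic node and, for an appropriate cross-section picking one index per slot, a name $\dot X$ for an element of $\Ext_T(B;X^*)$. Since $h$ takes only two values and $\bP$ is homogeneous under permutations of the $\kappa$-many indices within each slot, some $\varepsilon\in 2$ is forced to be the value of $h(\dot X)$ by a dense subset of $\bP$. Without entering any generic extension, this yields the following purely combinatorial ground-model statement: there is a family of finite level sets in $T$, indexed by a $\kappa$-sized block respecting the slot assignment, such that every $(d+1)$-tuple selected from the family according to the slot structure induces an element of $\Ext_T(B;X^*)$ colored $\varepsilon$.

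The combinatorial core is then an iterated application of the \Erdos--Rado theorem (Theorem \ref{thm.ER}) applied level by level: starting with a $\beth_d(\aleph_0)^+$-sized family, I repeatedly color $(d+1)$-tuples of nodes within a level by the pure-decision value produced at the preceding step, and thin to an $\aleph_1$-sized homogeneous sub-matrix. From this sub-matrix I would extract $S\in [D,T]$ by recursion on critical levels of $T$: at each splitting level (Case (a)) or coding level (Case (b)) I choose a single critical node inside the sub-matrix together with extensions of the remaining nodes of the current level, invoking the (Labeled) Extension Property to guarantee that such extensions exist and preserve the $+$-similarity (respectively $\mathrm{L}+$-similarity) type of $T$.

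The main obstacle is reconciling three constraints simultaneously at each recursive step: remaining inside the \Erdos--Rado-homogeneous sub-matrix, preserving $+$-similarity so that ultimately $S\sim T$ (resp.\ $S\Lsim T$) and every element of $\Ext_S(B;X^*)$ is indeed a tuple from the homogeneous family, and, in the LSDAP$^+$ case, matching the $\psi$-labels prescribed by the Labeled Extension Property, since in that setting splitting-node choices are no longer free but must realize the correct label inherited from $\tilde C$. Coordinating the Extension Property with the \Erdos--Rado homogeneity, in particular ensuring that the requisite critical extensions can always be found inside a thinned sub-matrix, is where the careful choice of $\bP$ and of the indexing of conditions by $\kappa$ pays off.
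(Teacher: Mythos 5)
Your proposal identifies the right general architecture (a ZFC forcing argument staying in the ground model, adapted from \cite{DobrinenRado19} etc.), but several of the key mechanisms you describe diverge from what actually makes the argument go through, and the divergences are substantive, not cosmetic.

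First, the indexing of the forcing poset. You assign $\kappa$-many indices to \emph{each} of the $d+1$ slots of $\tilde X$, including the critical slot. In the paper's poset the critical slot $d$ has exactly one node per condition; only the $d$ non-critical slots $i<d$ are fattened to $\kappa$-many parallel nodes. This is essential: $T$ is a diagonal tree, so there is a unique critical node per level; a condition must name it, not a $\kappa$-family of them. Indexing slot $d$ by $\kappa$ would let distinct conditions name distinct critical nodes at the same level, which is incompatible with the diagonal structure and obstructs the amalgamation arguments (Lemma \ref{lem.compat}) that keep the conditions $p_{\vec\alpha}$ mutually compatible.

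Second, the "homogeneity of $\bP$ under permutations forces a single color density" step does not work as stated, and it is not what the paper does. Even a weakly homogeneous poset can have different conditions force $h(\dot X)$ to different values; homogeneity gives you permutation-invariance, not pure decision. The paper instead fixes a $\bP$-name $\dot{\mathcal U}$ for a non-principal ultrafilter on the lengths $\dot L_d$ of critical nodes along the generic branch $\dot b_d$, and for each $\vec\alpha\in[\kappa]^d$ chooses a condition $p_{\vec\alpha}$ forcing that $h(\dot b_{\vec\alpha}\re\ell)$ has a fixed value $\varepsilon_{\vec\alpha}$ for $\dot{\mathcal U}$-many $\ell$, also decoding a concrete ground-model witness. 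The ultrafilter name is doing real work: it produces a per-$\vec\alpha$ stabilized color that can then be equalized by a single Ramsey step. Your proposal omits it entirely.

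Third, you propose "iterated application of Erd{\H o}s--Rado applied level by level." The paper applies Erd{\H o}s--Rado exactly once, to a single coloring $f$ on $[\kappa]^{2d}$ whose value encodes, for each $\iota\in\mathcal I$, the entire finitary data of $p_{\vec\alpha}$ (its $\varepsilon_{\vec\alpha}$, the size $k_{\vec\alpha}$ of its domain, the nodes, and the coincidence pattern of the domain ordinals). Homogenizing once on $2d$-tuples is what makes Lemmas \ref{lem.onetypes}--\ref{lem.compat} go through and lets arbitrary finite sets of the $p_{\vec\alpha}$'s be amalgamated. An iterated level-by-level Erd{\H o}s--Rado would thin differently at each level and gives no comparable compatibility guarantee across levels; it is not a drop-in replacement, and you give no argument that it yields a set from which a coherent $S\in[D,T]$ can be extracted. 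Also the arity is wrong: the coloring is on $2d$-tuples (to capture pairwise interactions of the $p_{\vec\alpha}$'s via $\iota\in\mathcal I$), not $(d+1)$-tuples, and the required $\kappa$ is accordingly sized for $\kappa\to(\aleph_1)^{2d}_{\aleph_0}$.

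Finally, the extraction of $S$ is where the forcing re-enters. It is not enough to note that you have a large homogeneous family; at each inductive step the paper constructs a condition $q$ from the current level $Y$, extends it to $r\le q$ deciding an $\ell_j$ at which $h(\dot b_{\vec\alpha}\re\ell_j)=\varepsilon^*$ for all $\vec\alpha$ in a relevant finite block, and reads off the finite level set $Z$ in the ground model from $\ran(r)$. You gesture at invoking the (Labeled) Extension Property, but the interleaving of forcing-decoded finite objects, SDAP/LSDAP to match passing types at the coding node (Case (b)), and the $\psi$-label constraints is the crux of the construction, and your plan leaves it at the level of "coordinate these." As written the proposal has a real gap at each of these points.
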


\begin{proof}
Enumerate the nodes in $B$
as $s_0,\dots,s_d$ so that  for
any $X\in\Ext_T(B;X^*)$,
the critical node in $X$ extends $s_d$.
Let $M$ denote the collection of all  $m\ge n$  for which  there is a member of
 $\Ext_T(B;X^*)$ with  nodes  in $T(m)$.
 Note that this set $M$ is the same for any $S\in\mathcal{T}$.
 Let $L=\{|t|:\exists m\in M\, (t\in T(m))\}$,
 the collection of lengths of nodes in the levels $T(m)$ for $m\in M$.

For
  $i\le d$,   let  $T_i=\{t\in T:t\contains s_i\}$.
Let $\kappa$ be large enough, so that the partition relation $\kappa\ra (\aleph_1)^{2d}_{\aleph_0}$ holds.
The following forcing notion $\bP$    adds $\kappa$ many paths through  each  $T_i$,  $i< d$,
and one path  through $T_d$.
\vskip.1in

In both   Cases (a) and (b), define
$\bP$ to be  the set of finite partial functions $p$   such that
$$
p:(d\times\vec{\delta}_p)\cup\{d\}\ra T(m_p),
$$
where
\begin{enumerate}
\item
  $m_p\in M$ and $\vec{\delta}_p$ is a finite subset of $\kappa$;
 \item
 $\{p(i,\delta) : \delta\in  \vec{\delta}_p\}\sse  T_i(m_p)$ for each $i<d$;
 \item
 $p(d)$ is the critical node in $T_d(m_p)$; and
\item
 For any choices of $\delta_i\in\vec{\delta}_p$,
 the level set $\{p(i,\delta_i):i<d\}\cup\{p(d)\}$ is a member of $\Ext_T(B;X^*)$.
 \end{enumerate}
Given $p\in\bP$,
 the {\em range of $p$} is defined as
$$
\ran(p)=\{p(i,\delta):(i,\delta)\in d\times \vec\delta_p\}\cup \{p(d)\}.
$$
Let $\ell_p$ denote the length of the nodes in $\ran(p)$.
If also $q\in \bP$ and $\vec{\delta}_p\sse \vec{\delta}_q$, then we let $\ran(q\re \vec{\delta}_p)$ denote
$\{q(i,\delta):(i,\delta)\in d\times \vec{\delta}_p\}\cup \{q(d)\}$.

In Case (a),
the partial
 ordering on $\bP$  is   defined by 
$q\le p$ if and only if (1) and (2) hold:
\begin{enumerate}
\item
$m_q\ge m_p$, $\vec{\delta}_q\contains \vec{\delta}_p$,
$q(d)\contains p(d)$.
(In the case of LSDAP$^+$, we also require that
$\psi(q(d))=\psi(p(d))$.)
\item
$q(i,\delta)\contains p(i,\delta)$ for each $(i,\delta)\in d\times \vec{\delta}_p$.
(In the case of LSDAP$^+$, we also require that $\psi(q(i,\delta))=\psi(p(i,\delta))$.)
\end{enumerate}
In Case (b), we define
$q\le p$  if and only if (1) and (2) hold and additionally, the following third requirement holds:
\begin{enumerate}
\item[(3)]
Letting $U=T\re (\ell_p-1)$,
$U\cup\ran(p)\plussim
U\cup \ran(q\re\vec{\delta}_p)$.
\end{enumerate}
(Requirement (3)  is stronger than that which was used for the Rado graph in \cite{DobrinenRado19}, because
for relations of  arity three or more, the extension $q$ must preserve information about $1$-types over the fixed finite structure which we wish to extend.)
Then   $(\bP,\le)$ is a separative, atomless partial order.

The next part of the proof (up to and including Lemma \ref{lem.compat}) follows that of  \cite{DobrinenRado19} almost verbatim.
The key difference between the work here and in \cite{DobrinenRado19} is that here, information which makes the proof work for relations of any arity is
embedded  in the definition of $\Ext_T(B,X^*)$.
For  $(i,\al)\in d\times \kappa$,
 let
\begin{equation}
 \dot{b}_{i,\al}=\{\lgl p(i,\al),p\rgl:p\in \bP\mathrm{\  and \ }\al\in\vec{\delta}_p\},
\end{equation}
 a $\bP$-name for the $\al$-th generic branch through $T_i$.
Let
\begin{equation}
\dot{b}_d=\{\lgl p(d),p\rgl:p\in\bP\},
\end{equation}
a $\bP$-name for the generic branch through $T_d$.
 Given a generic filter   $G\sse \bP$, notice that
 $\dot{b}_d^G=\{p(d):p\in G\}$,
 which  is a cofinal path of  critical   nodes in $T_d$.
Let $\dot{L}_d$ be a $\bP$-name for the set of lengths of critical   nodes in $\dot{b}_d$, and note that
$\bP$ forces  that $\dot{L}_d\sse L$.
Let $\dot{\mathcal{U}}$ be a $\bP$-name for a non-principal ultrafilter on $\dot{L}_d$.
Given  $p\in \bP$, recall that  $\ell_p$ denotes the lengths of the nodes in $\ran(p)$,
and  notice  that
\begin{equation}
 p\forces  \forall
 (i,\al)\in d\times \vec\delta_p\, (\dot{b}_{i,\al}\re \ell_p= p(i,\al)) \wedge
( \dot{b}_d\re \ell_p=p(d)).
\end{equation}

We will write sets $\{\al_i:i< d\}$ in $[\kappa]^d$ as vectors $\vec{\al}=\lgl \al_0,\dots,\al_{d-1}\rgl$ in strictly increasing order.
For $\vec{\al}\in[\kappa]^d$,  let
\begin{equation}
\dot{b}_{\vec{\al}}=
\lgl \dot{b}_{0,\al_0},\dots, \dot{b}_{d-1,\al_{d-1}},\dot{b}_d\rgl.
\end{equation}
For  $\ell<\om$,
 let
 \begin{equation}
 \dot{b}_{\vec\al}\re \ell=
 \lgl \dot{b}_{0,\al_0}\re \ell,\dots, \dot{b}_{d-1,\al_{d-1}}\re \ell,\dot{b}_d\re \ell\rgl.
 \end{equation}
 One sees that
$h$  is a coloring on level sets  of the form $\dot{b}_{\vec\al}\re \ell$
whenever this is
 forced to be a member of $\Ext_T(B; X^*)$.
Given $\vec{\al}\in [\kappa]^d$ and  $p\in \bP$ with $\vec\al\sse\vec{\delta}_p$,
let
\begin{equation}
X(p,\vec{\al})=\{p(i,\al_i):i<d\}\cup\{p(d)\},
\end{equation}
recalling  that this level set
$X(p,\vec{\al})$
 is a member of
  $\Ext_T(B;X^*)$.

For each $\vec\al\in[\kappa]^d$,
choose a condition $p_{\vec{\al}}\in\bP$ satisfying the following:
\begin{enumerate}
\item
 $\vec{\al}\sse\vec{\delta}_{p_{\vec\al}}$.
\item
There is an $\varepsilon_{\vec{\al}}\in 2$
 such that
$p_{\vec{\al}}\forces$
``$h(\dot{b}_{\vec{\al}}\re \ell)=\varepsilon_{\vec{\al}}$
for $\dot{\mathcal{U}}$ many $\ell$ in $\dot{L}_d$''.
\item
$h(X(p_{\vec\al},\vec{\al}))=\varepsilon_{\vec{\al}}$.
\end{enumerate}
Such conditions can be found as follows:
Fix some $X\in \Ext_T(B;X^*)$ and let $t_i$ denote the node in $X$ extending $s_i$, for each $i\le d$.
For $\vec{\al}\in[\kappa]^d$,  define
$$
p^0_{\vec{\al}}=\{\lgl (i,\delta), t_i\rgl: i< d, \ \delta\in\vec{\al} \}\cup\{\lgl d,t_d\rgl\}.
$$
Then
 (1) will  hold for all $p\le p^0_{\vec{\al}}$,
 since $\vec\delta_{p_{\vec\al}^0}= \vec\al$.
 Next,
let  $p^1_{\vec{\al}}$ be a condition below  $p^0_{\vec{\al}}$ which
forces  $h(\dot{b}_{\vec{\al}}\re \ell)$ to be the same value for
$\dot{\mathcal{U}}$  many  $\ell\in \dot{L}_d$.
Extend this to some condition
 $p^2_{\vec{\al}}\le p_{\vec{\al}}^1$
 which
 decides a value $\varepsilon_{\vec{\al}}\in 2$
 so that
 $p^2_{\vec{\al}}$ forces
  $h(\dot{b}_{\vec{\al}}\re \ell)=\varepsilon_{\vec{\al}}$
for $\dot{\mathcal{U}}$ many $\ell$ in $\dot{L}_d$.
Then
 (2) holds for all $p\le p_{\vec\al}^2$.
If $ p_{\vec\al}^2$ satisfies (3), then let $p_{\vec\al}=p_{\vec\al}^2$.
Otherwise,
take  some  $p^3_{\vec\al}\le p^2_{\vec\al}$  which forces
$\dot{b}_{\vec\al}\re \ell\in \Ext_T(B;X^*)$ and
$h'(\dot{b}_{\vec\al}\re \ell)=\varepsilon_{\vec\al}$
for
some $\ell\in\dot{L}$
 with
$\ell_{p^2_{\vec\al}}< \ell\le \ell_{p^3_{\vec\al}}$.
Since $p^3_{\vec\al}$  forces  that $\dot{b}_{\vec\al}\re \ell$ equals
$\{p^3_{\vec\al}(i,\al_i)\re \ell:i<d\}\cup \{p^3_{\vec\al}(d)\re \ell\}$,
which is exactly
$X(p^3_{\vec\al}\re \ell,\vec\al)$,
and this level set is in the ground model, it follows that $h(X(p^3_{\vec\al}\re \ell,\vec\al))
=\varepsilon_{\vec\al}$.
Let
$p_{\vec\al}$ be $p^3_{\vec\al}\re \ell$.
Then $p_{\vec\al}$ satisfies (1)--(3).

Let $\mathcal{I}$ denote the collection of all functions $\iota: 2d\ra 2d$ such that
for each $i<d$,
$\{\iota(2i),\iota(2i+1)\}\sse \{2i,2i+1\}$.
For $\vec{\theta}=\lgl \theta_0,\dots,\theta_{2d-1}\rgl\in[\kappa]^{2d}$,
$\iota(\vec{\theta}\,)$ determines the pair of sequences of ordinals $\lgl \iota_e(\vec{\theta}\,),\iota_o(\vec{\theta}\,)\rgl$, where
\begin{align}
\iota_e(\vec{\theta}\,)&=
\lgl \theta_{\iota(0)},\theta_{\iota(2)},\dots,\theta_{\iota(2d-2))}\rgl\cr
\iota_o(\vec{\theta}\,)&=
 \lgl\theta_{\iota(1)},\theta_{\iota(3)},\dots,\theta_{\iota(2d-1)}\rgl.
 \end{align}

We now proceed to  define a coloring  $f$ on
$[\kappa]^{2d}$ into countably many colors.
Let $\vec{\delta}_{\vec\al}$ denote $\vec\delta_{p_{\vec\al}}$,
 $k_{\vec{\al}}$ denote $|\vec{\delta}_{\vec\al}|$,
$\ell_{\vec{\al}}$ denote  $\ell_{p_{\vec\al}}$, and let $\lgl \delta_{\vec{\al}}(j):j<k_{\vec{\al}}\rgl$
denote the enumeration of $\vec{\delta}_{\vec\al}$
in increasing order.
Given  $\vec\theta\in[\kappa]^{2d}$ and
 $\iota\in\mathcal{I}$,   to reduce subscripts
 let
$\vec\al$ denote $\iota_e(\vec\theta\,)$ and $\vec\beta$ denote $\iota_o(\vec\theta\,)$, and
define
\begin{align}\label{eq.fiotatheta}
f(\iota,\vec\theta\,)= \,
&\lgl \iota, \varepsilon_{\vec{\al}}, k_{\vec{\al}}, p_{\vec{\al}}(d),
\lgl \lgl p_{\vec{\al}}(i,\delta_{\vec{\al}}(j)):j<k_{\vec{\al}}\rgl:i< d\rgl,\cr
& \lgl  \lgl i,j \rgl: i< d,\ j<k_{\vec{\al}},\ \mathrm{and\ } \delta_{\vec{\al}}(j)=\al_i \rgl,\cr
&\lgl \lgl j,k\rgl:j<k_{\vec{\al}},\ k<k_{\vec{\beta}},\ \delta_{\vec{\al}}(j)=\delta_{\vec{\beta}}(k)\rgl\rgl.
\end{align}
Fix some ordering of $\mathcal{I}$ and define
\begin{equation}
f(\vec{\theta}\,)=\lgl f(\iota,\vec\theta\,):\iota\in\mathcal{I}\rgl.
\end{equation}

By the \Erdos-Rado Theorem  \ref{thm.ER},  there is a subset $K\sse\kappa$ of cardinality $\aleph_1$
which is homogeneous for $f$.
Take $K'\sse K$ so that between each two members of $K'$ there is a member of $K$.
Given  sets of ordinals $I$ and $J$,  we write $I<J$  to mean that  every member of $I$ is less than every member of $J$.
Take  $K_i\sse K'$  be  countably infinite subsets
satisfying
  $K_0<\dots<K_{d-1}$.

Fix some  $\vec\gamma\in \prod_{i<d}K_i$, and define
\begin{align}\label{eq.star}
&\varepsilon^*=\varepsilon_{\vec\gamma},\ \
k^*=k_{\vec\gamma},\ \
t_d=p_{\vec\gamma}(d),\cr
t_{i,j}&=p_{\vec{\gamma}}(i,\delta_{\vec{\gamma}}(j))\mathrm{\ for\ }
i<d,\ j<k^*.
\end{align}
We show that the values  in equation (\ref{eq.star}) are the same for any choice of
$\vec\gamma$.

\begin{lem}\label{lem.onetypes}
 For all $\vec{\al}\in \prod_{i<d}K_i$,
 $\varepsilon_{\vec{\al}}=\varepsilon^*$,
$k_{\vec\al}=k^*$,  $p_{\vec{\al}}(d)=t_d$, and
$\lgl p_{\vec\al}(i,\delta_{\vec\al}(j)):j<k_{\vec\al}\rgl
=
 \lgl t_{i,j}: j<k^*\rgl$ for each $i< d$.
\end{lem}

\begin{proof}
Let
 $\vec{\al}$ be any member of $\prod_{i<d}K_i$, and let $\vec{\gamma}$ be the set of ordinals fixed above.
Take  $\iota\in \mathcal{I}$
to be the identity function on $2d$.
Then
there are $\vec\theta,\vec\theta'\in [K]^{2d}$
such that
$\vec\al=\iota_e(\vec\theta\,)$ and $\vec\gamma=\iota_e(\vec\theta'\,)$.
Since $f(\iota,\vec\theta\,)=f(\iota,\vec\theta'\,)$,
it follows that $\varepsilon_{\vec\al}=\varepsilon_{\vec\gamma}$, $k_{\vec{\al}}=k_{\vec{\gamma}}$, $p_{\vec{\al}}(d)=p_{\vec{\gamma}}(d)$,
and $\lgl \lgl p_{\vec{\al}}(i,\delta_{\vec{\al}}(j)):j<k_{\vec{\al}}\rgl:i< d\rgl
=
\lgl \lgl p_{\vec{\gamma}}(i,\delta_{\vec{\gamma}}(j)):j<k_{\vec{\gamma}}\rgl:i< d\rgl$.
\end{proof}

Let $l^*$ denote the length of the  node $t_d$, and notice that
the  node
 $t_{i,j}$ also has length $l^*$,  for each   $(i,j)\in d\times k^*$.

\begin{lem}\label{lem.j=j'}
Given any $\vec\al,\vec\beta\in \prod_{i<d}K_i$,
if $j,k<k^*$ and $\delta_{\vec\al}(j)=\delta_{\vec\beta}(k)$,
 then $j=k$.
\end{lem}

\begin{proof}
Let $\vec\al,\vec\beta$ be members of $\prod_{i<d}K_i$   and suppose that
 $\delta_{\vec\al}(j)=\delta_{\vec\beta}(k)$ for some $j,k<k^*$.
For  $i<d$, let  $\rho_i$ be the relation from among $\{<,=,>\}$ such that
 $\al_i\,\rho_i\,\beta_i$.
Let   $\iota$ be the member of  $\mathcal{I}$  such that for each $\vec\theta\in[K]^{2d}$ and each $i<d$,
$\theta_{\iota(2i)}\ \rho_i \ \theta_{\iota(2i+1)}$.
Fix some
$\vec\theta\in[K']^{2d}$ such that
$\iota_e(\vec\theta)=\vec\al$ and $\iota_o(\vec\theta)= \vec\beta$.
Since between any two members of $K'$ there is a member of $K$, there is a
 $\vec\zeta\in[K]^{d}$ such that  for each $i< d$,
 $\al_i\,\rho_i\,\zeta_i$ and $\zeta_i\,\rho_i\, \beta_i$.
Let   $\vec\mu,\vec\nu$ be members of $[K]^{2d}$ such that $\iota_e(\vec\mu)=\vec\al$,
$\iota_o(\vec\mu)=\vec\zeta$,
$\iota_e(\vec\nu)=\vec\zeta$, and $\iota_o(\vec\nu)=\vec\beta$.
Since $\delta_{\vec\al}(j)=\delta_{\vec\beta}(k)$,
the pair $\lgl j,k\rgl$ is in the last sequence in  $f(\iota,\vec\theta)$.
Since $f(\iota,\vec\mu)=f(\iota,\vec\nu)=f(\iota,\vec\theta)$,
also $\lgl j,k\rgl$ is in the last  sequence in  $f(\iota,\vec\mu)$ and $f(\iota,\vec\nu)$.
It follows that $\delta_{\vec\al}(j)=\delta_{\vec\zeta}(k)$ and $\delta_{\vec\zeta}(j)=\delta_{\vec\beta}(k)$.
Hence, $\delta_{\vec\zeta}(j)=\delta_{\vec\zeta}(k)$,
and therefore $j$ must equal $k$.
\end{proof}

For each $\vec\al\in \prod_{i<d}K_i$, given any   $\iota\in\mathcal{I}$, there is a $\vec\theta\in[K]^{2d}$ such that $\vec\al=\iota_o(\vec\al)$.
By the second line of equation  (\ref{eq.fiotatheta}),
there is a strictly increasing sequence
$\lgl j_i:i< d\rgl$  of members of $k^*$ such that
$\delta_{\vec\gamma}(j_i)=\al_i$.
By
homogeneity of $f$,
this sequence $\lgl j_i:i< d\rgl$  is the same for all members of $\prod_{i<d}K_i$.
Then letting
 $t^*_i$ denote $t_{i,j_i}$,
 one sees that
\begin{equation}
p_{\vec\al}(i,\al_i)=p_{\vec{\al}}(i, \delta_{\vec\al}(j_i))=t_{i,j_i}=t^*_i.
\end{equation}
Let $t_d^*$ denote $t_d$.

\begin{lem}\label{lem.compat}
For any finite subset $\vec{J}\sse \prod_{i<d}K_i$,
$p_{\vec{J}}:=\bigcup\{p_{\vec{\al}}:\vec{\al}\in \vec{J}\,\}$
is a member of $\bP$ which is below each
$p_{\vec{\al}}$, $\vec\al\in\vec{J}$.
\end{lem}

\begin{proof}
Given  $\vec\al,\vec\beta\in \vec{J}$,
if
 $j,k<k^*$ and
 $\delta_{\vec\al}(j)=\delta_{\vec\beta}(k)$, then
 $j$ and $k$ must be equal, by
 Lemma  \ref{lem.j=j'}.
Then  Lemma \ref{lem.onetypes} implies
that for each $i<d$,
\begin{equation}
p_{\vec\al}(i,\delta_{\vec\al}(j))=t_{i,j}=p_{\vec\beta}(i,\delta_{\vec\beta}(j))
=p_{\vec\beta}(i,\delta_{\vec\beta}(k)).
\end{equation}
Hence,
 for all
$\delta\in\vec{\delta}_{\vec\al}\cap
\vec{\delta}_{\vec\beta}$
and  $i<d$,
$p_{\vec\al}(i,\delta)=p_{\vec\beta}(i,\delta)$.
Thus,
$p_{\vec{J}}:=
\bigcup \{p_{\vec{\al}}:\vec\al\in\vec{J}\}$
is a  function with domain $\vec\delta_{\vec{J}}\cup\{d\}$, where
$\vec\delta_{\vec{J}}=
\bigcup\{
\vec{\delta}_{\vec\al}:
\vec\al\in\vec{J}\,\}$; hence
, $p_{\vec{J}}$ is a member of $\bP$.
Since
for each $\vec\al\in\vec{J}$,
$\ran(p_{\vec{J}}\re \vec{\delta}_{\vec\al})=\ran(p_{\vec\al})$,
it follows that
$p_{\vec{J}}\le p_{\vec\al}$ for each $\vec\al\in\vec{J}$.
\end{proof}

This ends the material drawn directly from \cite{DobrinenRado19}.

We now proceed to  build a (diagonal coding)  tree $S\in [D,T]$ so that  the coloring
$h$ will be  monochromatic on $\Ext_S(B;X^*)$.
Recall that $n$ is the integer such that $D=r_n(T)$.
Let $\{ m_j:j<\om\}$ be the strictly increasing enumeration of $M$, noting that $m_0\ge n$.
For each $i\le d$,
extend the node $s_i\in B$ to the node $t^*_i$.
Extend each node $u$
 in $\max(D)^+\setminus B$  to some  node  $u^*$ in $T\re \ell^*$.
 If $X^*$ has a coding node and $m_0=n$,
 require also that
 $(u^*)^+(u^*;D)\sim u^+(u;D)$;
 \EEAP\ ensures that such $u^*$ exist.
 Set
\begin{equation}
U^*=\{t^*_i:i\le d\}\cup\{u^*:u\in \max(D)^+\setminus B\}
\end{equation}
and note that $U^*$
end-extends   $\max(D)^+$.

If $m_0=n$,
then
$D\cup U^*$ is a member of $r_{m_0+1}[D,T]$.
In
 this case, let $U_{m_0+1}=D\cup U^*$, and
 let $U_{m_1}$ be any member of $r_{m_1}[U_{m_0+1},T]$.
Note  that   $U^*$ is  the only member of $\Ext_{U_{m_1}}(B;X^*)$, and it has $h$-color $\varepsilon^*$.
Otherwise,
 $m_0>n$.
 In this case,
 take some  $U_{m_0}\in r_{m_0}[D,T]$ such that
 $\max(U_{m_0})$ end-extends $U^*$,
 and
 notice that   $\Ext_{U_{m_0}}(B;X^*)$ is empty.

Now assume that  $j<\om$ and
 we have constructed $U_{m_j}\in r_{m_j}[D,T]$
  so that every member of $\Ext_{U_{m_j}}(B;X^*)$
 has $h$-color $\varepsilon^*$.
Fix some  $V\in r_{m_j+1}[U_{m_j} ,T]$ and let $Y=\max(V)$.
We will extend the nodes in $Y$  to construct
$U_{m_j+1}\in r_{m_j+1}[U_{m_j},T]$
with the property that all members of $\Ext_{U_{m_j+1}}(B;X^*)$ have the same
 $h$-value $\varepsilon^*$.
This will be achieved  by constructing
 the condition $q\in\bP$, below, and then extending it to some condition $r \le q$ which decides that  all members of
 $\Ext_T(B;X^*)$ coming from the nodes in $\ran(r)$ have $h$-color $\varepsilon^*$.

Let $q(d)$ denote  the  splitting node or coding  node in $Y$ and let $\ell_q=|q(d)|$.
For each $i<d$,
let  $Y_i$ denote  $Y\cap T_i$.
For each $i<d$,
take  a set $J_i\sse K_i$ of  size card$(Y_i)$
and label the members of $Y_i$ as
$\{z_{\al}:\al\in J_i\}$.
Let $\vec{J}$ denote $\prod_{i<d}J_i$.
By   Lemma \ref{lem.compat},
the set $\{p_{\vec\al}:\vec\al\in\vec{J}\}$ is compatible,  and
$p_{\vec{J}}:=\bigcup\{p_{\vec\al}:\vec\al\in\vec{J}\}$ is a condition in $\bP$.

Let
 $\vec{\delta}_q=\bigcup\{\vec{\delta}_{\vec\al}:\vec\al\in \vec{J}\}$.
For $i<d$ and $\al\in J_i$,
define $q(i,\al)=z_{\al}$.
It follows  that for each
$\vec\al\in \vec{J}$ and $i<d$,
\begin{equation}
q(i,\al_i)\contains t^*_i=p_{\vec\al}(i,\al_i)=p_{\vec{J}}(i,\al_i),
\end{equation}
and
\begin{equation}
q(d)\contains t^*_d=p_{\vec\al}(d)=p_{\vec{J}}(d).
\end{equation}

For   $i<d$ and $\delta\in\vec{\delta}_q\setminus
J_i$,
we need to extend each node $p_{\vec{J}}(i,\delta)$ to some  node of length $\ell_q$ in order to construct a condition $q$ extending $p_{\vec{J}}$.
These nodes will not be a part of the construction of $U_{m_j+1}$, however; they only are only a technicality allowing us to find some $r\le q\le p_{\vec{J}}$ from which we will build $U_{m_j+1}$.
In Case (a),
let
$q(i,\delta)$ be  any  extension
 of $p_{\vec{J}}(i,\delta)$ in $T$ of length $\ell_q$.
 In Case (b),
 let $q(i,\delta)$ be  any  extension
 of $p_{\vec{J}}(i,\delta)$ in $T$ of length $\ell_q$  with
 \begin{equation}\label{eq.qidelta}
 q(i,\delta)^+(q(d); T\re(\ell^*-1))
 \sim
 p_{\vec{J}}(i,\delta)^+(p_{\vec{J}}(d); T\re(\ell^*-1)).
 \end{equation}
 The \EEAP\ guarantees the existence of such $q(i,\delta)$.
 (In the case of LSDAP$^+$, in addition to (\ref{eq.qidelta}), we also require  that  $\psi(q(i,\delta))=\psi(p(i,\delta))$ for all $i<d$ and $\delta\in\vec{\delta}_q\setminus
J_i$.
In this case, LSDAP$^+$ guarantees the existence of such a $q(i,\delta)$.)
Define
\begin{equation}
q=\{q(d)\}\cup \{\lgl (i,\delta),q(i,\delta)\rgl: i<d,\  \delta\in \vec{\delta}_q\}.
\end{equation}
This $q$ is a condition in $\bP$, and $q\le p_{\vec{J}}$.

Now take an $r\le q$ in  $\bP$ which  decides some $\ell_j$ in $\dot{L}_d$ for which   $h(\dot{b}_{\vec\al}\re \ell_j)=\varepsilon^*$, for all $\vec\al\in\vec{J}$.
This is possible since for all $\vec\al\in\vec{J}$,
$p_{\vec\al}$ forces $h(\dot{b}_{\vec\al}\re \ell)=\varepsilon^*$ for $\dot{\mathcal{U}}$ many $\ell\in \dot{L}_d$.
By the same argument as in creating the conditions $p_{\vec\al}$,
 we may assume that
 the nodes in the image of $r$ have length  $\ell_j$.
Since
$r$ forces $\dot{b}_{\vec{\al}}\re \ell_j=X(r,\vec\al)$
for each $\vec\al\in \vec{J}$,
and since the coloring $h$ is defined in the ground model,
it follows that
$h(X(r,\vec\al))=\varepsilon^*$ for each $\vec\al\in \vec{J}$.
Let
\begin{equation}
Y_0=\{q(d)\}\cup \{q(i,\al):i<d,\ \al\in J_i\},
\end{equation}
 and let
\begin{equation}
Z_0=\{r(d)\}\cup \{r(i,\al):i<d,\ \al\in J_i\}.
\end{equation}

Now we consider the two cases separately.
In Case (a),
let $Z$ be the level set
consisting of  the nodes in $Z_0$ along with
a  node  $z_y$ in $T\re \ell_j$ extending  $y$, for each
 $y\in
Y\setminus Y_0$.
Then $Z$ end-extends $Y$.
By \EEAP, it does not matter how the nodes  $z_y$ are chosen (except that in the case of LSDAP$^+$, we also require that $\psi(z_y)=\psi(y)$).
Letting $U_{m_j+1}=U_{m_j}\cup Z$,
we see that $U_{m_j+1}$ is a member of $r_{m_j+1}[U_{m_j},T]$ such that
$h$ has value $\varepsilon^*$ on $\Ext_{U_{m_j+1}}(B;X^*)$.

In Case (b),
 $r(d)$ is a coding node.
 Since $r\le q$,
the nodes in $\ran(r\re\delta_q)$ have the same passing types over $T\rl \ell_q$
as the nodes in $\ran(q)$ have over $T\rl \ell_q$.
We now need to extend all the other members of
$Y\setminus Y_0$
 to nodes with the required
 passing types at $r(d)$.
 For each
$y\in Y\setminus Y_0$,
 choose
a  member  $z_y\supset y$  in $T_d\re \ell_j$
so that
\begin{equation}\label{eq.zyplus}
z_y^+(r(d);U_{m_j})\sim y^+(q(d); U_{m_j}).
\end{equation}
\EEAP\ ensures the existence of  such $z_y$.
(In the case of LSDAP$^+$, 
in addition to (\ref{eq.zyplus}),
we also require that $\psi(z_y)=\psi(y)$.)
Let
$Z$ be the level set
consisting of the nodes in $Z_0$ along with
the nodes $z_y$ for
 $y\in Y\setminus Y_0$.
Then $Z$ end-extends $Y$ and moreover,
$U_{m_j}\cup Z\plussim V$.
Letting $U_{m_j+1}=U_{m_j}\cup Y$,
we see that $U_{m_j+1}$ is a member of $r_{m_j+1}[U_{m_j},T]$  and
$h$ has value $\varepsilon^*$ on $\Ext_{U_{m_j+1}}(B;X^*)$.

Let $U_{m_{j+1}}$ be any member of
$r_{m_{j+1}}[U_{m_j+1},T]$.
This completes the inductive construction.
Let $S=\bigcup_{j<\om}U_{m_j}$.
Then $S$ is a member of $[D,T]$ and
 for each $X\in\Ext_{S}(B)$,  $h(X)=\varepsilon^*$.
Thus, $S$ satisfies the theorem.
\end{proof}

\begin{rem}\label{rem.plussimfornoncn}
By  the construction in the previous proof, in Case (b) the coding nodes in any member $X\in\Ext_S(B;X^*)$ extend the coding node $t^*_d$.
It then follows from
(3) in Definition \ref{defn.sct}
that for every level set $X\sse S$ with $A\cup X\sim\tilde{A}\cup X^*$, the coding node $c$ in $X$  automatically satisfies
$c^+(c;A)\sim  (t^*_d)^+(t^*_d;A)
\sim \tilde{x}_d^+(\tilde{x}_d;\tilde{A})$, where $\tilde{x}_d$ denotes the  coding node in $X^*$.
Thus, $A\cup X\plussim \tilde{A}\cup X^*$ if and only if the
non-coding nodes in $X$ have immediate successors with similar passing types over $A\cup\{c\}$ as their counterparts in $X^*$ have over $\tilde{A}\cup\{\tilde{x}_d\}$.

Moreover, for languages with only unary and binary relations, in Case (b)  the set
$\Ext_T(B;X^*)$ is exactly the set of all end-extensions $X$ of $B$ such that $A\cup X\plussim \tilde{A}\cup\tilde{X}$ ($A\cup X\Lplussim \tilde{A}\cup\tilde{X}$ in the case of LSDAP$^+$).
\end{rem}

The main theorem of this paper  follows immediately from the previous theorem.
\vskip.1in

\begin{thm.indivisibility}
Suppose  $\mathcal{K}$ is a  \Fraisse\ class
in a finite relational language
with relation symbols  in any arity such that
its  \Fraisse\ limit  $\bK$ satisfies
\EEAP$^+$.
Then $\bK$ is indivisible.
\end{thm.indivisibility}

\begin{proof} 
Let $\bfC$ be a singleton structure in $\mathcal{K}$, and suppose $h$ is a coloring of all copies of $\bfC$ inside $\bK$ into two colors.
Let $T$ be a diagonal coding subtree of $\bS$ representing $\bK$, if one exists.
Otherwise,  we may without loss of generality assume that $T$ is a diagonal coding subtree of $\bU$ 
in which coding nodes representing $\bfC$ occur densely above any coding node.
Let $X^*$ be the coding node in $T$ of least length representing a copy of $\bfC$.
Let $A=D=r_0(T)$ be the empty sequence.
In the case that $T$ is a subtree of $\bS$ and $\Gamma$ is of size at least two,
let $B$ consist of $X^*$ along with one node $t_{\gamma}$ of the same length as $X^*$ extending  $\gamma$, for each $\gamma\in \Gamma$.
Otherwise, 
let $B$ be the initial segment of $X^*$ of length one.
Then
Theorem \ref{thm.matrixHL} provides us with a coding tree $S\in [D,T]$ such  that $h$ is monochromatic on $\Ext_S(B;X^*)$.
Since $D=r_0(T)$, every coding node in $S$ representing a copy of
$\bfC$
is a member of  $\Ext_S(B;X^*)$.
Thus, $\bK$ is indivisible.
 \end{proof}

\begin{rem}
The conclusion of Theorem \ref{thm.matrixHL} also holds for  \Fraisse\ structures satisfying LSDAP$^+$  in languages with finitely many relation symbols of arity at most two, but more work is required for the proof.  Indivisibility for such structures will follow from Theorem 3.6  in Part II. 
\end{rem}

\section{Conclusion}

The main theorem, Theorem 1.2, of this paper 
showing that all \Fraisse\ structures satisfying SDAP$^+$ with finitely many relations of any finite arities are indivisible 
followed from 
the Level Set Ramsey
Theorem \ref{thm.matrixHL}.
In Part II, \cite{CDPII}, we will start with Theorem
\ref{thm.matrixHL} as the basis for an induction proof of upper bounds for the big Ramsey degrees of \Fraisse\ structures satisfying SDAP$^+$ with finitely many relations of arity at most two.
Those upper bounds are given in terms of finite diagonal antichains of coding nodes representing a given finite structure.
Such upper bounds will moreover be proved to be exact, leading to big Ramsey structures in the sense of  Zucker \cite{Zucker19} which  have a simple presentation. 
Towards the end of \cite{CDPII}, a catalogue of results on indivisibility and on big Ramsey degrees will be presented, 
showing which
previous results are recovered by our methods and which results are new to our Parts I and II.

\newpage

\bibliography{references}
\bibliographystyle{ijmart}

\end{document}